\newcommand{\n}{\mathbb{N}}
\newcommand{\z}{\mathbb{Z}}
\newcommand{\re}{\mathbb{R}}
\newcommand{\al}{\alpha}
\newcommand{\la}{\lambda}
\newcommand{\de}{\delta}
\newtheorem{theorem}{Theorem}
\newtheorem{lemma}[theorem]{Lemma}
\newtheorem{corollary}[theorem]{Corollary}
\newtheorem{proposition}[theorem]{Proposition}
\theoremstyle{remark}
\newtheorem{remark}[theorem]{Remark}
\newtheorem{example}{Example}
\begin{document}
	\sloppy
	\title{Joining properties of automorphisms disjoint with all ergodic systems}
\author{P. Berk, M. G\'orska, T. de la Rue}

\maketitle
	\abstract{We study the class $Erg^\perp$ of automorphisms which are disjoint with all ergodic systems. We prove that the identities are the only multipliers of $Erg^\perp,$ that is, each automorphism whose every joining with an element of $Erg^{\perp}$ yields a system which is again an element of $Erg^{\perp}$, must be an identity. Despite this fact, we show that $Erg^\perp$ is closed {by} taking {Cartesian} products. Finally, we prove that there are non-identity elements in $Erg^\perp$ whose self-joinings {always} yield elements in $Erg^\perp$. This shows that there are non-trivial characteristic classes included in $Erg^\perp$.}
	
	\section{Introduction}
	The study of disjointness which expresses the maximal non-isomorphisms (in particular, the absence of non-trivial common factors) of two measure-preserving automorphisms originated in the seminal work of Furstenberg \cite{Fu} in 1967. It is an important direction of research till today and  determining whether two automorphisms are disjoint still remains a challenging problem. In particular, given a class $\mathcal A$ of automorphisms,  a full description of the class $\mathcal A^{\perp}$ of automorphisms disjoint with every element of $\mathcal A$ is often a hard task.
Let us recall some classical results:
\begin{enumerate}
\item[(i)] the class $ID^\perp$ of automorphisms disjoint with all identities equals the class $Erg$ of all ergodic transformations (by $ID$ we denote the class of all identities), see \cite{Fu};
		\item[(ii)] the class $ZE^\perp$ of maps disjoint with the zero entropy automorphisms is equal to the family of $K$-automorphisms ($\!\!$\cite{Fu}, together with \cite{Le-Pa-Th},\cite{Si}).
	\end{enumerate}
For some other classical classes only weaker relations are known:
the class $Dist$ of distal automorphisms is disjoint from the class $WM$ of weakly mixing automorphisms \cite{Fu} or
the class $Rig$ of rigid automorphisms is disjoint from the class $MM$ of  mildly mixing automorphisms \cite{Fu-We}. Usually, the problem to decide about the disjointness of two automorphisms slightly simplifies when we consider them both ergodic as it is reduced to study ergodic joinings. On the other hand, if two automorphisms are disjoint then one of them has to be ergodic (otherwise they both have non-trivial identities as factors and these are not disjoint).{ In connection with (i), it is natural to ask whether $Erg^\perp$ goes beyond identities}. It indeed does (this observation was a folklore), however, a satisfactory characterisation of elements of the class $Erg^\perp$ was only given  very recently in \cite{Au-Go-Le-Ru}, see Theorem~\ref{classification} below.  One of the recent interest to study the non-ergodic case (the elements of $Erg^\perp$, other than one-point system, are obviously non-ergodic) came recently from Frantzikinakis and Host  \cite{Fr-Ho}, who studied so called (logarithmic) Furstenberg systems  of the classical Liouville function and discovered that the celebrated Chowla conjecture may fail because some (hypothetic) of its Furstenberg systems might be elements of $Erg^\perp$.\footnote{{See also still open Problem 3.1 of workshop \cite{AIM} with Frantzikinakis' question whether an automorphism considered in Example~1 below can be realized as a Furstenberg system of the Liouville function.}}

Another natural problem when studying classes of the form $\mathcal A^\perp$, is to describe the class  $\mathcal M(\mathcal A^\perp)\subseteq\mathcal A^\perp$ of its multipliers, that is, of automorphisms $S\in\mathcal  A^\perp$ such that every joining of  $S$ with every element of $\mathcal A^\perp$ also belongs to $\mathcal A^\perp$. The study of this class is in general very difficult and leads often to many surprising results. {In 1989, Glasner and Weiss \cite{Gl-We} originated the study of the class $WM^\perp$, and they proved that $Dist\subsetneq WM^\perp$. Continuing on their result, in two papers \cite{Gl1}, \cite{Le-Pa}, it was proved that}
$$
Dist \subsetneq \mathcal{M}(WM^\perp)\subsetneq WM^\perp.$$
Returning to the $Erg$ class, we clearly have
\begin{equation}\label{zaleznosci}
ID\subseteq \mathcal{M}(Erg^\perp)\subseteq Erg^\perp,\end{equation}
where it is easy to see that $\mathcal{M}(Erg^\perp)\subsetneq Erg^\perp$.
In fact, in Section~\ref{sec: soloid} (see Example \ref{example1}) we consider a standard twist on the torus  $(x,y)\mapsto (x,x+y)$ as an example of an element in $Erg^\perp$ and  {directly} show that it is not an element of $\mathcal M(Erg^\perp)$. The latter {assertion follows also from our main, somewhat surprising, result}.\footnote{It was formulated as a conjecture by M.\ Lema\'nczyk in 2018 (private communication).}

\begin{theorem}\label{t:main}
We have $ID=\mathcal{M}(Erg^\perp)$.\end{theorem}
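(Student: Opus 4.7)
The inclusion $ID \subseteq \mathcal{M}(Erg^\perp)$ has already been recorded in \eqref{zaleznosci}, so I focus on the reverse inclusion. I argue by contraposition: assuming $S$ is not an identity, I construct some $T \in Erg^\perp$ and a joining $\lambda$ of $S$ with $T$ for which $(S \times T, \lambda)$ fails to belong to $Erg^\perp$. Since $\mathcal{M}(Erg^\perp) \subseteq Erg^\perp$ by \eqref{zaleznosci}, I may assume $S \in Erg^\perp$, and write the ergodic decomposition $\mu = \int \mu_\omega \, dP(\omega)$ with ergodic components $(S_\omega)_{\omega \in \Omega}$.

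A first reduction uses $S \in Erg^\perp$ to rule out atoms of $P$ carried by non-trivial ergodic components: if $\omega_0$ were such an atom of mass $c > 0$, then setting $\lambda = c\,\Delta_{X_{\omega_0}} + (\mu - c\mu_{\omega_0}) \otimes \mu_{\omega_0}$ (a convex combination of the graph joining on $X_{\omega_0} \times X_{\omega_0}$ with a product off of it) yields a non-product joining of $S$ with the ergodic system $S_{\omega_0}$, contradicting $S \perp S_{\omega_0}$. After discarding the part of $X$ on which $S$ acts as the identity, one may therefore assume $P$ is non-atomic and $S_\omega$ non-trivial for $P$-a.e.\ $\omega$.

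The heart of the proof is the construction of the required $T$ and $\lambda$. Following the pattern of Example~\ref{example1}, which treats the standard torus twist explicitly, the strategy is to invoke the structural description of $Erg^\perp$ provided by Theorem~\ref{classification} to produce an auxiliary $T \in Erg^\perp$ --- typically built from $S$ itself via a relative product or a suitably twisted copy --- together with a non-product joining $\lambda$ that aligns certain measurable ergodic-theoretic data of $S_\omega$ and $T_{\omega'}$. One then exhibits an ergodic system $R$ and a non-product joining of $(S \times T, \lambda)$ with $R$, which witnesses $(S \times T, \lambda) \notin Erg^\perp$ and delivers the desired contradiction.

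The main obstacle is carrying out this construction uniformly for arbitrary non-identity $S \in Erg^\perp$. The hypothesis $S \in Erg^\perp$ itself forbids any fixed spectral or combinatorial invariant from being shared by a positive $P$-measure set of components $S_\omega$, which would otherwise immediately furnish a non-product joining of $S$ with an ergodic system. The role of the auxiliary $T$ is precisely to enlarge the base of ergodic components from $\Omega$ to a product $\Omega \times \Omega'$, on which such a coincidence can legitimately occur on a positive measure set and then be harvested into a non-trivial joining with $R$. Verifying that this manoeuvre is available for every non-identity $S \in Erg^\perp$ is the key technical step and relies essentially on the classification of \cite{Au-Go-Le-Ru}.
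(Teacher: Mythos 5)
There is a genuine gap: your proposal never actually carries out the construction that constitutes the theorem. After the (correct, but easy) reductions --- citing \eqref{zaleznosci} for $ID\subseteq\mathcal M(Erg^\perp)$ and ruling out atoms of $P$ on non-trivial components --- the entire content of your argument is the statement that one should ``produce an auxiliary $T\in Erg^\perp$, typically built from $S$ itself via a relative product or a suitably twisted copy, together with a non-product joining $\lambda$'' and then find an ergodic $R$ joining non-trivially with $(S\times T,\lambda)$. You yourself flag that verifying this ``for every non-identity $S\in Erg^\perp$ is the key technical step'' and you do not supply it. That step is precisely where all the work lies, and the twist-by-an-eigenvalue template of Example~\ref{example1} cannot serve as a uniform recipe: if the ergodic components $S_\omega$ are weakly mixing there is no eigenvalue to twist by, so the ``suitably twisted copy'' you gesture at does not exist in that case, and no single construction of the kind you describe covers all $S$.

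The paper resolves exactly this difficulty by a dichotomy on the fibers. If a positive $P$-measure set of components is weakly mixing, it works with $\tilde\mu=\int\mu_{\bar x}\otimes\mu_{\bar x}\,dP$ and the flip $R(x,y)=(y,x)$, represents $(T\times T,\tilde\mu)$ as a $\mathbb Z_2$-extension via Lemma~\ref{lem:skewrep}, shows $(T\times T)\circ R$ stays in $Erg^\perp$ (Lemmas~\ref{lem:disher}, \ref{RWM}), and joins the two to manufacture a rotation on $\mathbb Z_2$ as a factor, contradicting membership in $Erg^\perp$ (Lemma~\ref{prop: wmcase}). If instead a.e.\ component has a non-trivial eigenvalue, it needs a \emph{measurable} selection of eigenfunctions (Kuratowski--Ryll-Nardzewski, Lemmas~\ref{prop:measfactor} and \ref{cor:measfactor}) to extract a factor which is literally of the form of Example~\ref{example1}, and then uses that $\mathcal M(Erg^\perp)$ is a characteristic class, hence closed under factors (Corollary~\ref{cor:multclass}), to conclude. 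Neither the case split, nor the $\mathbb Z_2$-flip construction, nor the measurable-selection argument appears in your proposal, so as it stands it is a plan rather than a proof. (Minor additional point: your atom reduction does not dispose of the finite-period components $\overline X_n$, $n\ge2$, which the paper eliminates via Corollary~\ref{cor: Xn0mes}; and the paper's joining identifies the two base coordinates rather than enlarging the base to a product $\Omega\times\Omega'$ as you suggest.)
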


While the class $Erg^\perp$ is not closed under taking joinings, in Section~\ref{sec: products}, we prove that it is closed under Cartesian products.

{ To show that an ergodic automorphism $T$ is a multiplier of a class $\mathcal{A}^\perp$, it is  enough to show that the automorphisms determined by all self-joinings of $T$ are disjoint from the elements of $\mathcal{A}$, see for example Section~5 in \cite{Le-Pa} (see also Proposition \ref{prop: product})}. We show in Section \ref{sec: selfj} however that this approach fails when we study the class $Erg^\perp$ by exhibiting a non-identity automorphism $T$ whose all self-joinings yield elements of $Erg^\perp$ and which cannot be in $\mathcal{M}(Erg^\perp)$ by Theorem~\ref{t:main}. Moreover, the example constructed in Section \ref{sec: selfj}, serves to create a non-trivial \emph{characteristic class} (see Subsection \ref{subsec:charclass}), which does not contain an ergodic automorphism, yet it is not formed only of identities. This answers a question posed by Adam Kanigowski and Mariusz Lemańczyk in a private correspondence.

	
	\section{Preliminaries}
	\subsection{Measure-preserving automorphism and ergodic 
	decomposition}\label{subs:notation}
	We will consider automorphisms $T$ of standard Borel probability spaces 
	$(X,\mathcal{B},\mu)$ (the quadruple 
	$(X,\mathcal{B},\mu,T)$ is called a dynamical system). Recall that the measure $\mu$ can be written as a sum of its discrete and continuous part. We recall that, without loss of generality, we can assume that $X$ is a compact metric space and $T$ is a homeomorphism.
	Here $T:X\to X$ is an a.e. defined measurable bijection which is 
	$\mu$-preserving, {that is, $T_*\mu=\mu$, where $T_*(\cdot)$ denotes a push-forward of a measure}.
	Each Borel probability measure $\mu$ of a compact metric space $X$ yields a 
	standard Borel probability space. By $\mathscr{M}(X)$ we denote the space 
	of such measures (it is compact in the weak$^\ast$-topology). If $T:X\to X$ 
	is a homeomorphism  then by $\mathscr{M}(X,T)$ we denote the subspace of 
	$\mathscr{M}(X)$ of $T$-invariant measures (it is a nonempty closed subset 
	of $\mathscr{M}(X)$). 
	
	Let $(X,\mathcal{B},\mu)$ be a standard Borel probability space. By $\operatorname{Aut}(X,\mathcal{B},\mu)$ we denote the group of automorphisms of $(X,\mathcal{B},\mu)$. {It is a classical fact that $\operatorname{Aut}(X,\mathcal{B},\mu)$ with so called weak topology is
	 a Polish group (see ex. \cite{Kechris}).} Note that each $(X,\mathcal{B},\mu,T)$ defines a unitary operator $U_T$, called a \emph{Koopman operator}, of $L^2(X,\mathcal{B},\mu)$, $U_Tf=f\circ T$. We recall that $T$ is called \emph{ergodic} if the only $U_T$-invariant functions are constant. If additionally $U_T$ has no other eigenfunctions then $T$ is called \emph{weakly mixing}. 
	
	By $Aut$ we denote the space of all measure preserving transformations on all standard Borel probability spaces, considered up to isomorphism. By $Erg\subset Aut$, we denote the class of all ergodic automorphisms and by $WM\subset Erg$, the class of all weakly mixing automorphisms. Recall also that by $ID\subset Aut$, we denote the class of identities on all standard Borel probability spaces. Since we allow the measures under consideration to have atoms, the class $ID$ is uncountable.
	
	Of course, in general, $T\in \operatorname{Aut}(X,\mathcal{B},\mu)$ need not to be ergodic. Let us recall the classical concept of ergodic decomposition (see \cite{rokhlin}). Denote by $Inv(T)$ the $\sigma$-algebra of invariant sets. Let also
	\begin{equation}\label{eq: mudisint}
	\mu=\int_{X/Inv(T)}\mu_{\overline{x}}\,dP(\overline{x})
	\end{equation}
	be the disintegration of $\mu$ over $P:=\mu|_{Inv(T)}$. Then there exists a measurable partition of $\overline{X}:=X/Inv(T)$:
	\begin{equation}\label{def: ergcomp}
	\overline{X}=\bigcup_{n\geq1}\overline{X}_n\cup\overline{X}_\infty
	\end{equation}
	and a standard Borel probability space $(Z,\mathcal{D},\kappa)$, with $\kappa$ non-atomic, such that the space $(X,\mathcal{B},\mu)$ can be identified with
	the disjoint union of the corresponding product spaces (on $\mathbb Z_n:=\mathbb{Z}/n\mathbb{Z}$ we consider the uniform measure)
	$$
	\bigcup_{n\geq1}(\overline{X}_n\times\mathbb Z_n)\cup (\overline{X}_\infty\times Z)$$ and the action of $T$ in these new ``coordinates'' is given by
	\begin{equation}\label{ed}
		\begin{array}{c}
			T(\overline{x},i)=(\overline{x},T_{\overline{x}}(i))=(\overline{x},i+1)\text{ for }\overline{x}\in\overline{X}_n\text{ and}\\
			T(\overline{x},z)=(\overline{x},T_{\overline{x}}(z))\text{ for some ergodic }T_{\overline{x}}\in\operatorname{ Aut}(Z,\mathcal{D},\kappa), {\ x\in \overline X_\infty}.\end{array}\end{equation}
	The map $\overline{x}\mapsto T_{\overline{x}}$ is measurable in the 
	relevant Borel structures. The space $(\overline{X},P)$ is called the {\em 
	space of ergodic components}, and the representation of 
	$(X,\mathcal{B},\mu,T)$ given in \eqref{ed} is called the {\em ergodic 
	decomposition} of $T$. The ergodic decomposition is given up to a $P$-null 
	subset. For example, if $X=\mathbb{T}^2$ (considered with Lebesgue measure 
	$Leb_{\mathbb{T}^2}$) and $T:(x,y)\mapsto (x,y+x)$, then it is already the 
	ergodic decomposition of $T$ since $\mathbb{T}\times\{0\}$ is the space of 
	ergodic components  (with $P=Leb_{\mathbb{T}}$) and for $P$-a.e.\ $x\in 
	\mathbb{T}$, $T_x(y)=x+y$ is an (ergodic) irrational rotation.

	\subsection{Joinings}
	Let $(X,\mathcal B,\mu, T)$ and $(Y,\mathcal C,\nu,S)$ be two dynamical systems. We say that a  probability measure $\lambda$ on $(X\times Y, \mathcal B\otimes\mathcal C)$ is a \emph{joining }of $T$ and $S$ if:
	\begin{enumerate}
		\item $\lambda$ is $T\times S$- invariant;
		\item the marginals of $\lambda$ on $X$- and $Y$-coordinates are $\mu$ and $\nu$, respectively.
	\end{enumerate}
We note that $(X\times Y, \mathcal{B}\otimes \mathcal{C}, \lambda, T\times S)$ is a dynamical system (sometimes we denote such system simply by $T\vee S$).
We denote the set of all joinings of $T$ and $S$ by $J(T,S)$. 
Note that 
$\mu\otimes\nu$ is always an element of $J(T,S)$. Following \cite{Fu}, we say 
that $T$ and $S$ are \emph{disjoint} if $J(T,S)=\{\mu\otimes\nu\}$ and write 
$T\perp S$. 

In case when $T=S$, we set $J_2(T):=J(T,T)$ and refer to the elements of 
$J_2(T)$ as \emph{2-self-joinings}. Unless $T$ is the $1$-point dynamical 
system, it is never disjoint with itself. Indeed, the diagonal measure  {is a 
$2$-self-joining:\footnote{Here and thereafter we 
	denote $(f,g)(x)=(f(x),g(x))$.}} $\mu_{Id}:=(Id,Id)_*\mu$ and $\mu_{Id}\neq \mu \otimes \mu$. 
More generally, if $R\in C(T)$ is an element of the centralizer of $T$ (i.e. 
$R\in \operatorname{Aut}(X,\mathcal{B},\mu)$ and $R\circ T=T\circ R$), then the graph measure 
$\mu_R:=(Id,R)_*\mu$ is a member of $J_2(T)$. In particular, the 
\emph{off-diagonal} self-joinings $\mu_{T^n}$, $n\in \mathbb{Z}$, {belong to 
$J_2(T)$}. Besides, note that $C(T)$ is a closed subgroup of 
$\operatorname{Aut}(X,\mathcal{B},\mu)$.

It is a classical fact that in a weak-$\ast$ topology, $J_2(T)$ is a compact set,  see\cite{Gl2}. Moreover, if $T$ is additionally ergodic then $J_2(T)$ is a simplex and the set of extremal points of $J_2(T)$  consists of ergodic 2-self-joinings. In particular, the set of ergodic self-joinings is nonempty and we denote it by $J_2^e(T)$. Note that the graph joinings $\mu_R$ are always ergodic as long as $T$ is ergodic. In fact, the corresponding automorphisms are isomorphic to $T$, where an isomorphism is given by the map $x\mapsto (x,Rx)$.

Given $(X_i,\mathcal{B}_i,\mu_i, T_i) $, $i\geq 1$, we also consider infinite joinings $\lambda \in J(T_1,T_2,\ldots)$ (invariant mesures on $X_1\times X_2\times \ldots$ whose marginals are $\mu_i$, $i\geq 1$). If $T_i$ are ergodic then $J^e(T_1,T_2,\ldots)\neq \emptyset$. Note that if $A\subset \mathbb{N}$ then we can speak about $J(T_{i_1},T_{i_2}, \ldots)$, where $A=\{i_1,i_2,\ldots\}$ ($A$ can be finite here). Whenever $T_1=T_2=\ldots=T$, we speak about $J_{\infty}(T)$ the set of (infinite) self-joinings. Now, if $A\subset \mathbb{N}$ and $R_{i_k}\in C(T)$, $k\geq 2$, then we can consider the corresponding graph self-joining $\mu_{R_{i_2},R_{i_3},\ldots}:=(Id,R_{i_2}, R_{i_3},\ldots)_\ast \mu $. If $R_{i_k}$ are powers of $T$ then we speak about off-diagonal self-joinings. If each $\lambda\in J_{\infty}^e(T)$ is a  product of off-diagonal self-joinings then we say that the automorphism $T$ has the \emph{minimal self-joining property} (or \emph{MSJ}), see  \cite{Ru}.

 Following \cite{Ju-Ru}, we say that $(X,\mathcal B, \mu, T)$ has the \emph{PID property} if for every $\la\in J_{\infty}(X,\mathcal B,\mu, T)$, if $\la$ projects on every pair of coordinates as $\mu\otimes\mu$ then $\la$ is the product measure. Note that each $MSJ$ automorphism has the PID property.

Let $T_i:(X_i,\mathcal B_i,\mu_i)\to(X_i,\mathcal B_i,\mu_i)$ for $i=1,2$ be a pair of dynamical systems. Let also $\mathcal A_i\subset\mathcal B_i$  be factors of those systems, i.e. invariant sub-$\sigma$-algebras. Let $\mu_i =\int_{X/\mathcal A_i}\mu_{i,\bar x_i}\, d\mu_i|_{\mathcal A_i}$ be the disintegration of $\mu_i$ over the respective factor. Assume that $\lambda\in J(T_1|_{\mathcal A_1}, T_2|_{\mathcal A_2})$. Then the formula 
\[
\hat\lambda:=\int_{X_1/\mathcal A_1\times X_1/\mathcal A_2}\mu_{1,\bar x_1}\otimes\mu_{2,\bar x_2}\,d\lambda(\bar x_1,\bar x_2),
\]
 defines an element in $J(T_1,T_2)$ called \emph{the relatively independent extension of $\lambda$}. We proceed similarly for finite and countable families of automorphisms (cf. proof of Lemma \ref{lem: factorjoin}).

  We are interested in properties of the class of dynamical systems disjoint with all ergodic systems, i.e. with
 \[
 Erg^{\perp}:=\{T\in Aut;\, T\perp S\text{ for every }S\in Erg \}.
 \]
 In the following subsection we give some facts describing the structure of $Erg^{\perp}$ as well as some basic examples of elements from this class.
 
	\subsection{Elements of $Erg^{\perp}$}
	 Let us first recall some well-known examples of automorphisms from $Erg^\perp$. It is a classical fact that $Id\in Erg^{\perp}$. Also, every system of the form $(x,y)\mapsto (x,x+y)$ on $\mathbb T^2$ with invariant measure $\mu\otimes Leb_{\mathbb{T}}$ is an element of $Erg^{\perp}$, as long as $\mu$ is a continuous measure on $\mathbb{T}$ (see e.g. Theorem \ref{classification}). In fact, a recent result from \cite{Au-Go-Le-Ru} gives the full characterisation of elements of $Erg^\perp$ in the form of the following result.
	
	\begin{theorem}[$\!\!$\cite{Au-Go-Le-Ru}]\label{classification}
An automorphism $T$ belongs to $Erg^{\perp}$ if and only if ({see \eqref{eq: mudisint}}) for $P\otimes P$-almost every $(\bar x,\bar y)\in \overline{X}^2$ the automorphisms $T_{\bar x}$ and $T_{\bar y}$ are disjoint.
		\end{theorem}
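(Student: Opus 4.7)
The plan is to prove both implications, with the $(\Leftarrow)$ direction being the main challenge.

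For the $(\Rightarrow)$ direction I argue by contraposition. Suppose $(P\otimes P)(\{(\bar x,\bar y):T_{\bar x}\not\perp T_{\bar y}\})>0$. By Fubini I fix $\bar x_0$ with $P(B)>0$, where $B:=\{\bar y:T_{\bar x_0}\not\perp T_{\bar y}\}$, and set $S:=T_{\bar x_0}$, which is ergodic (every $T_{\bar x}$ in the ergodic decomposition is, by construction, ergodic). Via a measurable selection in the compact simplex of joinings, I choose for each $\bar x\in B$ a non-product $\lambda_{\bar x}\in J(T_{\bar x},S)$ and put $\lambda_{\bar x}:=\mu_{\bar x}\otimes\mu_{\bar x_0}$ for $\bar x\notin B$. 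Then $\lambda:=\int\lambda_{\bar x}\,dP(\bar x)\in J(T,S)$ disintegrates as the family $(\lambda_{\bar x})$ over $P$, so it differs from $\mu\otimes\mu_{\bar x_0}$ on the $P$-positive set $B$; hence $T\not\perp S$ and $T\notin Erg^\perp$.

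For the $(\Leftarrow)$ direction, I take $\lambda\in J(T,S)$ with $S$ ergodic and aim to show $\lambda=\mu\otimes\nu$ through a diagonal self-joining argument. Disintegrating $\lambda=\int(\lambda)_y\otimes\delta_y\,d\nu(y)$ over $\nu$, I form the relatively independent self-product
\[
\tilde\lambda:=\int(\lambda)_y\otimes(\lambda)_y\,d\nu(y)\in J_2(T).
\]
First, I decompose $\lambda=\int\lambda_t\,d\rho(t)$ into its $(T\times S)$-ergodic components. Since every $S$-invariant $A\subseteq Y$ lifts to the $(T\times S)$-invariant set $X\times A$, each $(p_Y)_*\lambda_t$ is $S$-ergodic; combined with $\int(p_Y)_*\lambda_t\,d\rho(t)=\nu$ and the extremality of $\nu$ in the simplex of $S$-invariant measures, this forces $(p_Y)_*\lambda_t=\nu$ for $\rho$-a.e.\ $t$. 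Symmetrically $(p_X)_*\lambda_t$ is $T$-ergodic, and by uniqueness of the ergodic decomposition of $\mu$ there is a measurable $t\mapsto\bar x(t)$ with $(p_X)_*\lambda_t=\mu_{\bar x(t)}$ and $\bar x_*\rho=P$. Thus each $\lambda_t$ is a genuine joining of the ergodic systems $T_{\bar x(t)}$ and $S$.

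Using $(\lambda)_y=\int(\lambda_t)_y\,d\rho(t)$ and Fubini,
\[
\tilde\lambda=\iiint(\lambda_t)_y\otimes(\lambda_s)_y\,d\nu(y)\,d\rho(t)\,d\rho(s),
\]
and the inner integral is a joining of $T_{\bar x(t)}$ with $T_{\bar x(s)}$. Since $\bar x_*\rho=P$, the pair $(\bar x(t),\bar x(s))$ is $(P\otimes P)$-distributed under $\rho\otimes\rho$, so the hypothesis forces the inner integral to equal $\mu_{\bar x(t)}\otimes\mu_{\bar x(s)}$ for $\rho\otimes\rho$-a.e.\ $(t,s)$, whence $\tilde\lambda=\mu\otimes\mu$. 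Testing $\tilde\lambda$ on $f\otimes f$ for $f\in L^2(\mu)$ yields
\[
\int F^2\,d\nu=\Bigl(\int F\,d\nu\Bigr)^2,\qquad F(y):=\int f\,d(\lambda)_y,
\]
so the Cauchy--Schwarz equality case makes $F$ $\nu$-a.e.\ constant; since $f$ is arbitrary, $(\lambda)_y=\mu$ for $\nu$-a.e.\ $y$ and thus $\lambda=\mu\otimes\nu$.

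The main obstacle is the identification step in the $(\Leftarrow)$ direction: matching the ergodic components $\lambda_t$ of $\lambda$ with elements of $J(T_{\bar x(t)},S)$ for the $\bar x$'s that actually appear in the ergodic decomposition of $\mu$ (i.e.\ ensuring $\bar x_*\rho=P$). Without this correspondence, the hypothesis $T_{\bar x}\perp T_{\bar y}$ cannot be invoked under the integral to collapse $\tilde\lambda$ to $\mu\otimes\mu$.
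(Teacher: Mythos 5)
The paper itself contains no proof of Theorem~\ref{classification}: it is quoted from \cite{Au-Go-Le-Ru} (in preparation), so there is no in-paper argument to measure yours against. Judged on its own terms, your proof is essentially correct. In the $(\Leftarrow)$ direction, which is the substantive one, your route is the natural joining-theoretic one: decompose $\lambda\in J(T,S)$ into its $T\times S$-ergodic components $\lambda_t$, use ergodicity of $S$ and uniqueness of the ergodic (Choquet) decomposition of $\mu$ to see that each $\lambda_t$ is a joining of some component $\mu_{\bar x(t)}$ with $S$ and that $\bar x_\ast\rho=P$; then the relative product $\tilde\lambda$ over the $Y$-coordinate is an average, over $\rho\otimes\rho$, of joinings of pairs of ergodic components, which the hypothesis collapses to $\mu\otimes\mu$, and the zero-variance argument gives $(\lambda)_y=\mu$ for $\nu$-a.e.\ $y$. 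All of these steps check out (in the last step take $f$ real, or test against $f\otimes\bar f$, and note that $\tilde\lambda\in J_2(T)$ uses $T_\ast(\lambda)_y=(\lambda)_{Sy}$, which you use implicitly).

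What you assert rather than prove are precisely the measurability points that the cited reference is said to handle. In the $(\Rightarrow)$ direction you need the set $\{(\bar x,\bar y):\ T_{\bar x}\not\perp T_{\bar y}\}$ to be measurable (this is needed even to state the theorem; the present paper defers such questions to Section~3.4 of \cite{Au-Go-Le-Ru}), and you need a genuinely measurable choice $\bar x\mapsto\lambda_{\bar x}$ of non-product joinings of $T_{\bar x}$ with $S=T_{\bar x_0}$; this can be obtained, e.g., by a uniformization/selection theorem applied to the Borel set of pairs $(\bar x,\kappa)$ with $\kappa\in J(T_{\bar x},S)\setminus\{\mu_{\bar x}\otimes\mu_{\bar x_0}\}$, but it deserves an explicit argument. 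Likewise, the measurable identification $t\mapsto\bar x(t)$ with $\bar x_\ast\rho=P$ rests on the Borel (essentially injective) nature of $\bar x\mapsto\mu_{\bar x}$ together with uniqueness of the ergodic decomposition, as you correctly flag. With these technical points filled in, your argument is a complete and self-contained proof of the statement.
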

Since the non-trivial elements of $Erg^{\perp}$ are never ergodic (none of the non-trivial system is disjoint with itself), the elements of the class $Erg^{\perp}$  in $\operatorname{Aut}(X,\mathcal{B},\mu)$ form a meagre set (as the ergodic systems form a generic subset of $\operatorname{Aut}(X,\mathcal{B},\mu)$ \cite{Ha}). Now, in view of Theorem \ref{classification} and the disjointness of time automorphisms result in \cite{Da-Ry}, the automorphism on $\mathbb T\times X$ given by $(t,x)\mapsto(t,T_t(x))$, where $(T_t)_{t\in\re}$ is a generic flow, is an element of $Erg^\perp$.

The following fact is also useful when trying to describe the class $Erg^{\perp}$.
	\begin{proposition}\label{ergdisj}
		Let $T \in Erg^\perp$ and let $R$ be an ergodic automorphism. Then $R$ is disjoint from $P$-a.e. fiber automorphism $T_{\bar x}$.
		\end{proposition}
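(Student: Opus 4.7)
The plan is to argue by contradiction: if $R$ were not disjoint from $T_{\bar x}$ on a set of positive $P$-measure, I would assemble a non-product joining of $T$ with $R$, contradicting $T\in Erg^\perp$.

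The key preparatory observation is the fibered description of $J(T,R)$. Since $\overline X$ is a factor of $T$ (the $\sigma$-algebra of invariants), it is also a factor of $T\times R$, so any $\lambda\in J(T,R)$ disintegrates as $\lambda=\int_{\overline X}\lambda_{\bar x}\,dP(\bar x)$, where each $\lambda_{\bar x}$ is a $(T_{\bar x}\times R)$-invariant measure on $X_{\bar x}\times Y$ with $X$-marginal $\mu_{\bar x}$. Moreover, the push-forward of $\lambda$ to $\overline X\times Y$ is a joining of the identity on $\overline X$ with the ergodic $R$, and the uniqueness of the ergodic decomposition of $\nu$ forces this push-forward to equal $P\otimes\nu$; hence the $Y$-marginal of $\lambda_{\bar x}$ is $\nu$ for $P$-a.e.\ $\bar x$. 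Thus $\lambda_{\bar x}\in J(T_{\bar x},R)$ $P$-a.e., and conversely any measurable selection $\bar x\mapsto\lambda_{\bar x}\in J(T_{\bar x},R)$ integrates to an element of $J(T,R)$.

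Suppose now, aiming at a contradiction, that $A:=\{\bar x\in\overline X:R\not\perp T_{\bar x}\}$ has positive $P$-measure. For $\bar x\in A$ the compact convex set $J(T_{\bar x},R)$ strictly contains the trivial joining $\mu_{\bar x}\otimes\nu$. Invoking a measurable selection theorem (Kuratowski--Ryll-Nardzewski, or Jankov--von Neumann for the complement of a singleton), I would pick a measurable family $\bar x\mapsto\lambda_{\bar x}$ with $\lambda_{\bar x}\in J(T_{\bar x},R)\setminus\{\mu_{\bar x}\otimes\nu\}$ on $A$, extended by $\lambda_{\bar x}:=\mu_{\bar x}\otimes\nu$ off $A$. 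The integrated measure $\lambda:=\int\lambda_{\bar x}\,dP(\bar x)$ lies in $J(T,R)$ by the correspondence above, and by the uniqueness of disintegration it cannot equal $\mu\otimes\nu$, since otherwise the two disintegrations would have to agree $P$-a.e., which fails on $A$. This contradicts the disjointness $T\perp R$ that follows from $T\in Erg^\perp$ together with the ergodicity of $R$.

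The main obstacle is the measurable selection step, in particular the verification that the relevant set
\[
\{(\bar x,\lambda)\in\overline X\times\mathscr M(X\times Y):\lambda\in J(T_{\bar x},R),\ \lambda\neq\mu_{\bar x}\otimes\nu\}
\]
is Borel (or at least analytic). This reduces to the measurability of the ergodic decomposition $\bar x\mapsto(T_{\bar x},\mu_{\bar x})$ in the weak topology on $\operatorname{Aut}(X,\mathcal B,\mu)$ and the standard continuity properties of joining spaces; both are well known but require some care with the varying fibre spaces appearing in \eqref{ed}.
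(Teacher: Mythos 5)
Your argument is correct in outline, but it follows a genuinely different route from the paper. The paper's proof is soft: it invokes Theorem~\ref{classification} to see that the fiber automorphisms are ($P\otimes P$-a.e.) pairwise disjoint and that $P$ restricted to the non-trivial part of $\overline X$ is continuous, and then applies Proposition~\ref{disjuptocount}, so the fibers not disjoint from $R$ form an at most countable, hence $P$-null, set; no joining is ever constructed. Your proof instead uses only the definition of $Erg^\perp$ (namely $T\perp R$) together with the fibered description of $J(T,R)$ and a measurable selection, and it therefore proves a formally stronger statement: for any $T$ with $T\perp R$ and $R$ ergodic, $R\perp T_{\bar x}$ for $P$-a.e.\ $\bar x$ --- independence of the characterization from \cite{Au-Go-Le-Ru} is a real advantage. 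The price is the selection step, which you correctly identify as the crux: note that $J(T_{\bar x},R)\setminus\{\mu_{\bar x}\otimes\nu\}$ is not closed, so Theorem~\ref{th:selector} does not apply directly; one either partitions $A$ into the countably many analytic sets where some fixed pair $(f_i,g_j)$ from countable dense families witnesses $\bigl|\int f_i\otimes g_j\,d\lambda-\int f_i\,d\mu_{\bar x}\int g_j\,d\nu\bigr|\ge 1/n$ (these fibers are closed, and weak measurability follows since $\bar x\mapsto\mu_{\bar x}$ is measurable, $X$ compact and $T$ a homeomorphism), or uses Jankov--von Neumann to get a universally measurable selector, which is enough to integrate against $P$. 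Your worry about ``varying fibre spaces'' disappears if you avoid the abstract model \eqref{ed} and simply regard $T_{\bar x}$ as $(X,\mu_{\bar x},T)$ on the fixed compact model, so that $J(T_{\bar x},R)$ sits inside the fixed space $\mathscr M(X\times Y)$. With these details filled in, your proof is complete; it is longer and more technical than the paper's countability argument, but more self-contained and more general.
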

	Indeed, in view of Theorem \ref{classification}, the measure $P$ is continuous. Then Proposition 
	\ref{ergdisj} follows from the following fact, proved in \cite{Au-Go-Le-Ru} 
	(note that the measure 
	$P|_{\overline{X}_\infty}$ has no atoms, { since otherwise $P\otimes P$ would have an atom in some point of the form $(\bar x, \bar x)$, with $T_{\bar x}$ not being a one-point system, which contradicts Theorem \ref{classification}}).
	\begin{proposition}\label{disjuptocount}
		Let $R$ be an ergodic automorphism. Then for every set $B\subset Aut$ of pairwise disjoint automorphisms, $R$ is disjoint with all but at most countably many elements of $B$.

	\end{proposition}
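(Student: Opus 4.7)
The plan is to associate, to every $T \in B$ with $R \not\perp T$, a nonzero closed subspace $H_T$ of $L^2_0(\nu)$ (where $(Y,\mathcal{C},\nu)$ is the $R$-space), in such a way that pairwise disjointness in $B$ forces the $H_T$'s to be pairwise orthogonal in $L^2(\nu)$. Since $(Y,\mathcal{C},\nu)$ is a standard Borel probability space, $L^2(\nu)$ is separable, and only countably many nonzero pairwise orthogonal closed subspaces can coexist there; this will yield the claim.

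To construct $H_T$, I would fix for each such $T$ a non-product joining $\lambda_T \in J(R,T) \setminus \{\nu \otimes \mu_T\}$, disintegrate $\lambda_T = \int_Y \delta_y \otimes (\lambda_T)_y \, d\nu(y)$ over the first coordinate, and take the associated Markov operator $\Phi_T \colon L^2(\mu_T) \to L^2(\nu)$ defined by $\Phi_T(f)(y) := \int f \, d(\lambda_T)_y$. This operator preserves integrals, hence sends $L^2_0(\mu_T)$ into $L^2_0(\nu)$; I set $H_T := \overline{\Phi_T(L^2_0(\mu_T))}$. Non-productness of $\lambda_T$ is equivalent to $(\lambda_T)_y \neq \mu_T$ on a set of positive $\nu$-measure, which translates to $H_T \neq \{0\}$.

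The key step is the orthogonality: if $T_1, T_2 \in B$ and $T_1 \perp T_2$, then $H_{T_1} \perp H_{T_2}$ in $L^2(\nu)$. For this I would form the relatively independent joining of $\lambda_{T_1}, \lambda_{T_2}$ over $R$,
$$\tilde \lambda := \int_Y (\lambda_{T_1})_y \otimes (\lambda_{T_2})_y \, d\nu(y),$$
and verify, using the equivariance $(T_i)_*(\lambda_{T_i})_y = (\lambda_{T_i})_{Ry}$ that follows from the $(R \times T_i)$-invariance of $\lambda_{T_i}$, that $\tilde \lambda$ is a member of $J(T_1,T_2)$. The disjointness hypothesis then forces $\tilde\lambda = \mu_1 \otimes \mu_2$, and applying this to $f \otimes g$ with $f \in L^2_0(\mu_1)$, $g \in L^2_0(\mu_2)$ gives
$$\langle \Phi_{T_1}(f), \Phi_{T_2}(g) \rangle_{L^2(\nu)} = \int f \otimes g \, d\tilde\lambda = \int f\, d\mu_1 \cdot \int g\, d\mu_2 = 0,$$
so $H_{T_1} \perp H_{T_2}$, as desired.

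The principal obstacle is purely technical: correctly handling the disintegrations and verifying the $(T_1 \times T_2)$-invariance of $\tilde \lambda$. Once that is in place, separability of $L^2(\nu)$ closes the argument, since selecting a unit vector in each $H_T$ would yield a pairwise orthonormal family in a separable Hilbert space—necessarily countable—so the collection of $T \in B$ non-disjoint from $R$ must itself be countable.
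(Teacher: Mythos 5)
Your argument is correct and essentially complete (modulo routine measurability facts about disintegrations): the relatively independent product $\tilde\lambda$ over the $R$-coordinate is indeed $T_1\times T_2$-invariant with marginals $\mu_{T_1},\mu_{T_2}$, so pairwise disjointness forces $\tilde\lambda=\mu_{T_1}\otimes\mu_{T_2}$, which is exactly the orthogonality of the ranges $\Phi_{T_1}(L^2_0(\mu_{T_1}))$ and $\Phi_{T_2}(L^2_0(\mu_{T_2}))$ in $L^2(\nu)$ (up to replacing $g$ by $\bar g$, harmless since $L^2_0$ is conjugation-invariant); moreover $\lambda_T\neq\nu\otimes\mu_T$ is equivalent to $H_T\neq\{0\}$ because $\Phi_T(f)-\int f\,d\mu_T=\Phi_T\bigl(f-\int f\,d\mu_T\bigr)$, and separability of $L^2(\nu)$ for a standard Borel space then caps the number of exceptional $T$ at countably many. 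There is no in-paper proof to compare against: the paper quotes Proposition~\ref{disjuptocount} from \cite{Au-Go-Le-Ru} (in preparation), so your Markov-operator/relative-product argument serves as a self-contained proof, and it is the standard joining-theoretic route one would expect for such a statement. One further observation: you never use the ergodicity of $R$, so your argument proves the statement for an arbitrary automorphism $R$; ergodicity only enters in how the paper applies the proposition (together with continuity of $P$) to obtain Proposition~\ref{ergdisj}.
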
 

	In Lemma \ref{lm: nonergeigenv} we prove that $\alpha \in \mathbb{S}^1$ is an eigenvalue of $T$ if 
	and only if $\alpha$ is an eigenvalue for a positive $P$-measure set of 
	fiber automorphisms $T_{\bar x}$. In 
	particular, if there exists $\al\in\mathbb S^1\setminus\{1\}$ such that 
	$\al$ is an eigenvalue for a set of positive $P$-measure fiber 
	automorphisms, then
	\begin{equation}\label{commonev}
	\begin{array}{c} \text{$T$ has the (ergodic) rotation by $\alpha$ as a 
	factor}\\\text{and $T$ is not an element of $Erg^{\perp}$.}\end{array}
	\end{equation}

 \subsection{Characteristic classes}\label{subsec:charclass}
Recall that a class $\mathcal F\subset Aut$ of measure preserving dynamical systems is \emph{characteristic} if it is closed under countable joinings and taking factors. In \cite{Ka-Ku-Le-Ru} the authors give a list of many examples of such classes. In particular, they proved that $ID$ - the class consisting of all identities of standard probability Borel spaces - is a characteristic class and is contained in every non-trivial characteristic class. A natural question arises, whether $ID$ is the only characteristic class that does not contain a non-trivial ergodic automorphism. {One of our goals is to answer positively to this question, by constructing such class inside $Erg^{\perp}$.}

First, we show that for every class $\mathcal A\subset Aut$, the set of all multipliers $\mathcal M(\mathcal A^{\perp})$ is a characteristic class. We prove the following general result.
\begin{lemma}\label{mult}
	Let $\mathcal A\subset {Aut}$. Then $(X,\mathcal B,\mu, T)\in 
	\mathcal A^{\perp}$ is  a multiplier of $\mathcal A^{\perp}$ if and only if 
	$(X^{\times \infty},\mathcal B^{\otimes \infty},\eta, T^{\times \infty})\in 
	\mathcal A^{\perp}$ is a multiplier of $\mathcal A^{\perp}$ for every $\eta\in 
	J_\infty(T)$.
\end{lemma}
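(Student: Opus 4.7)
The proof splits naturally into the two implications, and it is the $\Rightarrow$ direction that carries the content.

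For ($\Leftarrow$), the plan is to specialize to the totally diagonal self-joining
\[
\eta_\Delta := (\mathrm{Id},\mathrm{Id},\ldots)_*\mu \in J_\infty(T),
\]
supported on $\{(x,x,\ldots) : x\in X\}$. The map $x\mapsto (x,x,\ldots)$ is a canonical isomorphism between $(X,\mathcal{B},\mu,T)$ and $(X^{\times\infty},\mathcal{B}^{\otimes\infty},\eta_\Delta,T^{\times\infty})$. Any joining of $T$ with some $S\in\mathcal{A}^{\perp}$ therefore lifts along this isomorphism to a joining of the diagonal system with $S$ giving an isomorphic joined system, and so the hypothesized multiplier property at $\eta_\Delta$ transfers directly to $T$.

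For the nontrivial direction ($\Rightarrow$), fix $\eta\in J_\infty(T)$, a system $S\in\mathcal{A}^{\perp}$ on $(Y,\sigma_Y)$, and a joining $\lambda\in J\bigl((X^{\times\infty},\eta,T^{\times\infty}),S\bigr)$. I would argue by induction on $n$ that, writing $\lambda_n$ for the projection of $\lambda$ onto the first $n$ $X$-coordinates and $Y$, the finite-dimensional system $(X^n\times Y,\lambda_n,T^n\times S)$ lies in $\mathcal{A}^{\perp}$. The base case $n=0$ is just $S\in\mathcal{A}^{\perp}$. For the inductive step, view $X^{n+1}\times Y = X\times (X^n\times Y)$; then $\lambda_{n+1}$ becomes a joining of $T$ with the system $(X^n\times Y,\lambda_n,T^n\times S)$, which lies in $\mathcal{A}^{\perp}$ by inductive hypothesis. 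The assumed multiplier property of $T$ then yields $(X^{n+1}\times Y,\lambda_{n+1},T^{n+1}\times S)\in\mathcal{A}^{\perp}$. Taking $S$ the trivial one-point system in this chain recovers the required membership $(X^{\times\infty},\eta,T^{\times\infty})\in\mathcal{A}^{\perp}$.

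To lift these finite-dimensional statements to the infinite product, take any $R\in\mathcal{A}$ on $(Z,\mathcal{D},\sigma)$ and any $\theta\in J\bigl((X^{\times\infty}\times Y,\lambda,T^{\times\infty}\times S),R\bigr)$. For each $n$, the projection $\theta_n$ of $\theta$ onto $X^n\times Y\times Z$ is a joining of $(X^n\times Y,\lambda_n,T^n\times S)$ with $R$, and therefore $\theta_n=\lambda_n\otimes\sigma$ by the inductive step. Since cylinders generate $\mathcal{B}^{\otimes\infty}$, a Borel measure on $X^{\times\infty}\times Y\times Z$ is determined by its marginals on $X^n\times Y\times Z$ for all $n$, so $\theta=\lambda\otimes\sigma$, completing the multiplier property for $(X^{\times\infty},\eta,T^{\times\infty})$.

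The only conceptual obstacle is this final inverse-limit passage, but it is a routine projective-consistency argument for Borel measures on a countable product of standard spaces; the substantive mechanism of the lemma is the inductive ``one coordinate at a time'' application of the multiplier property of $T$.
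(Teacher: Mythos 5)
Your proof is correct and follows essentially the same route as the paper's: the diagonal self-joining for the easy direction, an inductive one-coordinate-at-a-time application of the multiplier property of $T$ for the finite marginals, and a cylinder-set (projective-consistency) passage to the infinite product. The only cosmetic point is the coordinate split in the inductive step (split off the last $X$-coordinate rather than the first, so that the remaining marginal is exactly $\lambda_n$); this does not affect the argument.
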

\begin{proof}
{$\Longleftarrow$:} It is enough to notice that $T$ is isomorphic to the diagonal joining $(Id\times Id\times\ldots)_*\mu$.
	

	{$\Longrightarrow$:} {Assume now that $(X,\mathcal B,\mu,T)\in\mathcal A^\perp$ is a multiplier of $\mathcal A^\perp$. }Notice first that for every $n\ge 2$ and 
	every $\tilde\la\in J_n(T)$, the 
	automorphism $(X^{\times n},\mathcal B^{\otimes n},\tilde \la, T^{\times 
		n})$ is an element of $\mathcal A^{\perp}$ and is a multiplier of this 
	class. Indeed, since $T$ is a multiplier of $\mathcal A^{\perp}$ then for 
	$n=1$ and  $(Y,\mathcal{C},\nu, S)\in \mathcal{A}^\perp$ we have that  
	$T\vee S\in \mathcal A^{\perp}$. Again, using the fact that $T$ is a 
	multiplier of  $\mathcal A^{\perp}$, $T\vee (T\vee S)\in \mathcal 
	A^{\perp}$, {which settles the case $n=2$}. The argument follows by induction.
	
	Let now $(Y,\mathcal C,\nu, S)\in \mathcal A^{\perp}$ and let $(Z,\mathcal 
	D,\rho, R)\in \mathcal{A}$. Consider also $\eta\in J(T^{\times\infty})$ 
	and $\zeta\in J(T^{\times\infty}\vee S, R)$. Let 
	$\zeta_n:=\pi^{n,Y,Z}_*{\zeta}$ be the projection on the first $n$ 
	$X$-coordinates, $Y$-coordinate and $Z$-coordinate of $\zeta$. In 
	particular, $\zeta_n\in J(T^{\times n}\vee S,R)$. By the finite case, 
	$\zeta_n=\zeta_n|_{X^n\times Y}\otimes\rho$. Since finite cylinders generate the whole 
	product $\sigma$-algebra, by passing with $n$ to infinity, we obtain that 
	$\zeta=\eta\otimes \rho$.
\end{proof}
\begin{corollary}\label{cor:multclass}
	Let $\mathcal A\subseteq Aut$. Then $\mathcal M(\mathcal A^\perp)$ is a characteristic class.
\end{corollary}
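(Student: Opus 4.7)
The plan is to verify the two defining properties of a characteristic class---closure under countable joinings and closure under taking factors. For closure under factors, let $T \in \mathcal M(\mathcal A^\perp)$ and let $T'$ be a factor via a factor map $\pi : X \to X'$. That $T' \in \mathcal A^\perp$ is standard: any joining $\lambda \in J(T',R)$ with $R \in \mathcal A$ lifts to the relatively independent extension $\hat\lambda \in J(T,R)$ over $\pi$, which must be the product by hypothesis, so $\lambda$ itself is the product. To show $T'$ is a multiplier, I would take $S \in \mathcal A^\perp$, $R \in \mathcal A$, $\lambda \in J(T',S)$, and $\zeta \in J(T'\vee S, R)$. Form $\hat\zeta$ on $X \times Y \times Z$ by relatively independent extension over $\pi$ on the first coordinate. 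A direct computation shows that the $(T,S)$-marginal of $\hat\zeta$ equals the relatively independent extension $\hat\lambda$ of $\lambda$, so $\hat\zeta$ is a joining of $(T \vee S, \hat\lambda)$ with $R$. Since $T$ is a multiplier, $(T\vee S, \hat\lambda) \in \mathcal A^\perp$, forcing $\hat\zeta = \hat\lambda \otimes \rho$; projecting back to $X' \times Y \times Z$ yields $\zeta = \lambda \otimes \rho$.

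For closure under countable joinings, the key intermediate observation is that the join of finitely many multipliers (under any joining) is again a multiplier. Given $A_1, A_2 \in \mathcal M(\mathcal A^\perp)$ joined by some $\gamma$, any joining of $A_1 \vee A_2$ with $C \in \mathcal A^\perp$ can be viewed first as a joining of $A_2$ with $C$ (placing it in $\mathcal A^\perp$ since $A_2$ is a multiplier) and then as a joining of $A_1$ with that system (again in $\mathcal A^\perp$ since $A_1$ is a multiplier). Iterating gives the finite case. For the countable case, fix $T_i \in \mathcal M(\mathcal A^\perp)$ with a joining $\eta$, denote by $T$ the resulting system, and take $S \in \mathcal A^\perp$, $\lambda \in J(T,S)$, $R \in \mathcal A$, and $\zeta \in J(T\vee S, R)$. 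Mimicking the final step of the proof of Lemma~\ref{mult}, I would project $\zeta$ to the first $n$ $X$-coordinates together with $Y$ and $Z$: the finite case forces $\zeta_n = \lambda_n \otimes \rho$, and since finite cylinders generate the product $\sigma$-algebra, passing $n \to \infty$ gives $\zeta = \lambda \otimes \rho$.

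The main obstacle I anticipate is making the relatively independent extension argument in the factor step fully rigorous---specifically, verifying that $\hat\zeta$ is $T \times S \times R$-invariant and that its $(T,S)$-marginal coincides with $\hat\lambda$. Both facts follow from the standard behavior of fiber measures under factor maps, namely $T_*\mu_{x'} = \mu_{T'x'}$, but this bookkeeping has to be carried out carefully. Everything else is a direct consequence of the definition of multiplier combined with the projective technique already established in Lemma~\ref{mult}.
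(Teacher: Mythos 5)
Your proof is correct and follows essentially the same route as the paper: closure under countable joinings via the finite-induction-plus-cylinder-limit argument of Lemma~\ref{mult} (which you rightly carry out for joinings of distinct multipliers, not just self-joinings), and closure under factors by lifting a hypothetical non-product joining through the relatively independent extension over the factor map, then invoking the multiplier property of $T$ and projecting back. The paper states both steps much more tersely, but the underlying ideas coincide.
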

\begin{proof}
In view of Lemma \ref{mult}, the class $\mathcal M(\mathcal A^\perp)$ is closed under taking countable joinings. It remains to see that it is closed under taking factors. Let $(X,\mathcal B,\mu, T)\in \mathcal M(\mathcal A^\perp)$ and let $(Y,\mathcal C,\nu, S)$ be a factor of $T$. If $S$ has a non-trivial joining with an ergodic system, then so does $T$, via the relatively independent extension. Hence $\mathcal M(\mathcal A^\perp)$ is also closed under taking factors.
\end{proof}

In view of the above corollary, a natural candidate for an example of a class that does not contain a non-trivial ergodic element and is larger than $ID$, would be the set $\mathcal M(Erg^\perp)$. However, in Section \ref{sec: soloid}, we show that this class contains only identities. To show that nonetheless such class exists, we present the following construction.

Let $T\in \operatorname{Aut}(X,\mathcal B,\mu)$. Let $\mathcal F(T)$ be the class of measure preserving dynamical systems, which consists of all countable self-joinings of $T$, as well as all factors of those joinings.

\begin{lemma}\label{lem: gencharclass}
	The class $\mathcal{F}(T)$ is characteristic.
\end{lemma}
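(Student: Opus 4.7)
The plan is to verify the two defining properties of a characteristic class in turn: closure under factors, and closure under countable joinings.

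Closure under factors is essentially by definition. If $(W,\mathcal{E},\rho,R)\in\mathcal{F}(T)$, then by construction there is a countable self-joining $(X^{\times I},\mathcal{B}^{\otimes I},\eta,T^{\times I})$ (with $I$ countable and $\eta\in J_I(T)$) and a factor map onto $(W,R)$. Any further factor $(V,\mathcal{D},\sigma,Q)$ of $R$ is then also a factor of this countable self-joining via composition of factor maps, so it lies in $\mathcal{F}(T)$.

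The substantive part is closure under countable joinings. I would take a sequence $(Y_i,\mathcal{C}_i,\nu_i,S_i)\in\mathcal{F}(T)$, $i\ge 1$, and $\lambda\in J(S_1,S_2,\ldots)$; the goal is to show $(\prod_i Y_i,\lambda,\prod_i S_i)\in\mathcal{F}(T)$. Write each $S_i$ as a factor of some $(X^{\times I_i},\eta_i,T^{\times I_i})$ via a factor map $\pi_i\colon X^{\times I_i}\to Y_i$, where each $I_i$ is at most countable and $\eta_i\in J_{I_i}(T)$. Disintegrate
\[
\eta_i=\int_{Y_i}\eta_{i,y_i}\,d\nu_i(y_i),
\]
with each $\eta_{i,y_i}$ concentrated on $\pi_i^{-1}(y_i)$, and define the relatively independent extension
\[
\tilde\lambda:=\int_{\prod_i Y_i}\bigotimes_{i\ge 1}\eta_{i,y_i}\,d\lambda(y_1,y_2,\ldots)
\]
on $X^{\times J}$, where $J:=\bigsqcup_i I_i$ (still countable).

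The key step is to verify that $\tilde\lambda\in J_J(T)$ and that $\lambda$ is a factor of $\tilde\lambda$ under $\prod_i\pi_i$. Invariance under $T^{\times J}$ follows because $\pi_i\circ T^{\times I_i}=S_i\circ\pi_i$ forces $(T^{\times I_i})_*\eta_{i,y_i}=\eta_{i,S_iy_i}$ for $\nu_i$-a.e.\ $y_i$, and $\lambda$ is $\prod_i S_i$-invariant; a change of variables then yields $(T^{\times J})_*\tilde\lambda=\tilde\lambda$. The marginal of $\tilde\lambda$ on the $i$-th block $X^{\times I_i}$ is $\int\eta_{i,y_i}\,d\nu_i(y_i)=\eta_i$, so $\tilde\lambda$ is indeed a countable self-joining of $T$. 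Finally, pushing $\tilde\lambda$ forward by $\prod_i\pi_i$ gives $\int\bigotimes_i(\pi_i)_*\eta_{i,y_i}\,d\lambda=\int\bigotimes_i\delta_{y_i}\,d\lambda=\lambda$, so $(\prod_iY_i,\lambda,\prod_iS_i)$ is a factor of $(X^{\times J},\tilde\lambda,T^{\times J})\in\mathcal{F}(T)$, hence lies in $\mathcal{F}(T)$.

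The main obstacle is purely bookkeeping: ensuring that the (at most countable) union $J$ of the index sets remains countable, that the disintegrations $y_i\mapsto\eta_{i,y_i}$ are jointly measurable so that the integral defining $\tilde\lambda$ makes sense, and that the invariance calculation goes through for the whole product action rather than factor by factor. Once these standard measure-theoretic points are in place, the relative independent extension argument delivers the closure under countable joinings essentially for free.
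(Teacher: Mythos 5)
Your proof is correct and follows essentially the same route as the paper: closure under factors is the observation that a factor of a factor of a countable self-joining is again such a factor, and closure under countable joinings is exactly the relatively independent extension construction that the paper isolates as Lemma~\ref{lem: factorjoin}, applied with $T_i=(T^{\times I_i},\eta_i)$; your verification that the resulting measure $\tilde\lambda$ is a countable self-joining of $T$ (each single-coordinate marginal equal to $\mu$, invariance via equivariance of the disintegrations) matches the paper's argument.
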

This result follows from the fact that a factor of a factor of a fixed automorphism $R$ is still a factor of $R$ and from the following classical lemma, whose proof we provide below for the sake of completeness.
\begin{lemma}\label{lem: factorjoin}
	Let $\{(X_i,\mathcal B_i,\mu_i,T_i)\}_{i=1}^{\infty}$ be a family of measure preserving automorphisms. For every $i\in\n$, let $(Y_i,\mathcal C_i,\nu_i,S_i)$ be a factor of $T_i$ and let $F_i:X_i\to Y_i$ be the factorizing map. Then for any $\lambda\in J(S_1,S_2,\ldots)$, there exists a joining $\eta\in J(T_1,T_2,\ldots)$, such that $(S_1\times S_2\times \ldots,\lambda)$ is a factor of $(T_1\times T_2\times\ldots,\eta)$.
\end{lemma}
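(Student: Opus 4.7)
The plan is to define $\eta$ by the standard relatively independent extension construction, already used in the paragraph preceding the lemma in the two-coordinate case, and to verify that it satisfies the four required properties: it is a probability measure on the countable product, its marginal on each $X_i$ equals $\mu_i$, it is invariant under $T_1\times T_2\times\cdots$, and the product of factor maps $F_1\times F_2\times\cdots$ pushes $\eta$ forward to $\lambda$.

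First I would disintegrate each $\mu_i$ over the factor $\nu_i$ via $F_i$, writing
\[
\mu_i=\int_{Y_i}\mu_{i,y_i}\,d\nu_i(y_i),
\]
where $\mu_{i,y_i}$ is concentrated on $F_i^{-1}(y_i)$; this exists and is $\nu_i$-measurably parametrised because all spaces are standard Borel. Then I would set
\[
\eta:=\int_{Y_1\times Y_2\times\cdots}\bigotimes_{i\ge 1}\mu_{i,y_i}\,d\lambda(y_1,y_2,\ldots).
\]
The countable product $\bigotimes_{i\ge 1}\mu_{i,y_i}$ is defined by Kolmogorov's extension theorem from its finite marginals $\bigotimes_{i=1}^n\mu_{i,y_i}$, and the map $(y_1,y_2,\ldots)\mapsto\bigotimes_{i\ge 1}\mu_{i,y_i}$ is measurable into $\mathscr M(\prod_i X_i)$ because measurability on cylinder sets already suffices.

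Next I would verify the marginals and invariance. Projecting $\eta$ onto $X_i$ gives $\int\mu_{i,y_i}\,d\nu_i(y_i)=\mu_i$, using that the $i$-th marginal of $\lambda$ is $\nu_i$. For invariance, $F_i\circ T_i=S_i\circ F_i$ implies $(T_i)_*\mu_{i,y_i}=\mu_{i,S_i y_i}$ for $\nu_i$-a.e.\ $y_i$, so applying $T_1\times T_2\times\cdots$ to the inner product measure transports $\bigotimes_i\mu_{i,y_i}$ to $\bigotimes_i\mu_{i,S_iy_i}$; invariance of $\lambda$ under $S_1\times S_2\times\cdots$ then yields $(T_1\times T_2\times\cdots)_*\eta=\eta$. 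Finally, pushing $\eta$ forward by $F_1\times F_2\times\cdots$ yields $\lambda$, since $(F_i)_*\mu_{i,y_i}=\delta_{y_i}$, so $(S_1\times S_2\times\cdots,\lambda)$ is a factor of $(T_1\times T_2\times\cdots,\eta)$.

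The only genuinely delicate point is the measurability and consistency of the countable product of disintegrations; I would handle this by working first at the level of cylinder sets, verifying the finite-coordinate case (which is the classical relatively independent joining construction), and then invoking Kolmogorov extension together with the monotone class theorem to lift invariance and marginal identities from finite cylinders to the full product $\sigma$-algebra. All remaining checks reduce to substituting definitions and using the standard Borel assumption on the spaces involved.
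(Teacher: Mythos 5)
Your construction is exactly the paper's proof: disintegrate each $\mu_i$ over its factor, define $\eta$ as the integral of the product measures $\bigotimes_i \mu_{i,y_i}$ against $\lambda$, and use $(x_1,x_2,\ldots)\mapsto(F_1(x_1),F_2(x_2),\ldots)$ as the factor map. The additional verifications you spell out (marginals, invariance, measurability of the disintegration product) are routine and consistent with what the paper leaves implicit, so the proposal is correct and essentially identical in approach.
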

\begin{proof}
	For every $i\in\n$, consider the disintegration of $\mu_i$ with respect to the factor $S_i$:
	\[
	\mu_i=\int_{Y_i}\mu_{i,y_i}\, d\nu_i(y_i).
	\]
	Then consider the measure
	\[
	\eta:=\int_{Y_1\times Y_2\times\ldots} (\mu_{1,y_1}\otimes \mu_{2,y_2}\otimes \ldots)\,d\lambda(y_1,y_2,\ldots)
	\]
	on $X_1\times X_2\times\ldots$. It is a non-trivial element of $J(T_1,T_2,\ldots)$. Then $(S_1\times S_2\times\ldots, \lambda)$ is a factor of $(T_1\times T_2\times\ldots,\eta)$ through the following factorizing map
	\[
	(x_1,x_2,\ldots)\mapsto (F_1(x_1), F_2(x_2),\ldots).
	\]
\end{proof}

In Section \ref{sec: selfj} we provide an example of a non-identity automorphism $T\in Erg^\perp$ such that $\mathcal F(T)$ does not contain a non-trivial ergodic element, see Corollary \ref{cor: noergcharclass}.

%
%
%

\subsection{Group extensions}

Let $G$ be a compact abelian group with Haar measure $\lambda_G$ (for the tori groups $\mathbb{T}^n$ we will use notation $Leb_{\mathbb{T}^n}$) and let $(X,\mathcal B,\mu, T)$ be an ergodic system. Consider a \emph{cocycle} $\varphi:X\to G$, i.e.\ a measurable map. Then, let
\[
T_\varphi:X\times G\to X\times G;\ T_{\varphi}(x,g):=(Tx,\varphi(x)g)
\]
be the corresponding \emph{skew product}, it preserves the measure $\mu\otimes \lambda_G$. Moreover, $G$ acts on $X\times G$ via $(x,h)\mapsto (x,hg)$. This action also preserves the product measure and yields elements of $C(T_\varphi)$. We have the following criterion of ergodicity  for skew products.
\begin{theorem}[\!\!\cite{Fu}]\label{thm56}
	 Let $T$ be ergodic. Then the automorphism {$(X\times G, \mu\otimes\lambda_G,T_\varphi)$} is ergodic if and only if for any character $1 \neq \pi:G\to \mathbb{S}^1$ there is no measurable solution $F:X\to \mathbb{C}$, $|F|=1$ of the equation
	\begin{equation}\label{cohomeq}
	\pi(\varphi(x))F(x)=F(Tx).
	\end{equation}
	Moreover, assuming $T_\varphi$ is ergodic, $c$ is an eigenvalue of 
	$T_\varphi$ if and only if there exists a measurable solution 
	$F:X\to\mathbb C$, $|F|=1$ of the equation
		\[
	\pi(\varphi(x))F(x)=cF(Tx).
	\]

	\end{theorem}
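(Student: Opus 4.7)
The natural plan is a Fourier decomposition of $L^2(X\times G)$ with respect to the compact abelian group $G$. Every $H\in L^2(X\times G,\mu\otimes\lambda_G)$ admits the unique expansion
\[
H(x,g)=\sum_{\pi\in\widehat G}H_\pi(x)\pi(g),\qquad H_\pi(x):=\int_G H(x,g)\overline{\pi(g)}\,d\lambda_G(g),
\]
which identifies $L^2(X\times G)=\bigoplus_{\pi\in\widehat G}\mathcal H_\pi$, where $\mathcal H_\pi:=\{F(x)\pi(g):F\in L^2(X,\mu)\}$. The key observation is that the Koopman operator of $T_\varphi$ preserves each subspace $\mathcal H_\pi$: indeed
\[
U_{T_\varphi}(F\otimes\pi)(x,g)=F(Tx)\pi(\varphi(x))\pi(g),
\]
so on $\mathcal H_\pi$ the operator $U_{T_\varphi}$ is unitarily equivalent to the multiplication--composition operator $F\mapsto \pi(\varphi)\cdot F\circ T$ on $L^2(X,\mu)$.

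For the ergodicity statement, I would analyse $U_{T_\varphi}H=H$ component by component. On $\mathcal H_1$ (trivial character), the equation reduces to $F\circ T=F$, whose solutions are constants by ergodicity of $T$. For a non-trivial $\pi$, an invariant $H_\pi\pi\in\mathcal H_\pi$ yields $\pi(\varphi(x))H_\pi(Tx)=H_\pi(x)$ (possibly after replacing $\pi$ with $\bar\pi$ to match the convention of the statement); conversely any such solution produces a non-constant invariant function. To upgrade a non-zero solution to a unimodular one, note that the equation forces $|H_\pi|\circ T=|H_\pi|$, so by ergodicity of $T$ the modulus is constant a.e., and $F:=H_\pi/|H_\pi|$ satisfies the cohomological equation with $|F|=1$. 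Thus $T_\varphi$ fails to be ergodic iff some non-trivial $\pi$ admits such $F$.

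For the eigenvalue assertion, assume $T_\varphi$ ergodic and $U_{T_\varphi}H=cH$ with $H\neq0$. Decomposing $H=\sum_\pi H_\pi\pi$ and using the invariance of each $\mathcal H_\pi$, the eigenvalue equation splits into $\pi(\varphi(x))H_\pi(Tx)=cH_\pi(x)$ for every $\pi$. Since $|c|=1$, the same modulus argument as above shows each non-zero $H_\pi$ has constant modulus, so at least one character produces a unimodular $F$ solving the twisted equation; conversely, given such $F$ one builds the eigenfunction $H(x,g):=\overline{F(x)}\pi(g)$ by direct verification.

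The proof is essentially a book-keeping exercise once the decomposition $L^2(X\times G)=\bigoplus_\pi\mathcal H_\pi$ and the invariance of its summands under $U_{T_\varphi}$ are in place; the only subtlety I would want to handle carefully is the orientation convention (i.e.\ whether the natural cohomological equation is written with $\pi$ or $\bar\pi$ and with $F\circ T$ on which side), so that the final formulas match the statement of Theorem~\ref{thm56}.
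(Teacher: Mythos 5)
Your proof is correct: the Fourier decomposition $L^2(X\times G)=\bigoplus_{\pi\in\widehat G}\mathcal H_\pi$, the invariance of each $\mathcal H_\pi$ under $U_{T_\varphi}$, and the ergodicity-of-$T$ argument forcing constant modulus give exactly the standard argument for this classical result, and your explicit flagging of the $\pi$ versus $\bar\pi$ orientation is the only bookkeeping needed (it is harmless, since eigenvalues of a Koopman operator come in conjugate pairs). The paper itself gives no proof --- it quotes the theorem from Furstenberg --- so there is nothing further to compare; your argument is the expected one.
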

We also need the following ``relative unique ergodicity'' type property of ergodic cocycles (i.e.\ of cocycles  for which $(T_\varphi,\mu\otimes\lambda_G)$ is ergodic).
\begin{theorem}\label{thm57}
	Assume that $\varphi:X\to G$ is an ergodic cocycle (over  {an ergodic} $T$) then $\mu\otimes\lambda_G$ is the only $T_{\varphi}$-invariant measure which projects on $\mu$.
\end{theorem}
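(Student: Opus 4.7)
The plan is to use Fourier analysis on the fiber $G$ to reduce the statement to the coboundary characterization of ergodicity given by Theorem~\ref{thm56}. Let $\nu$ be a $T_\varphi$-invariant probability measure on $X\times G$ whose projection onto $X$ is $\mu$. Disintegrate $\nu=\int_X \nu_x\, d\mu(x)$ along the projection. The first step is to translate $T_\varphi$-invariance into a transformation rule for the fiber measures: using the $T$-invariance of $\mu$ and an easy comparison of $\nu(T_\varphi^{-1}(A\times B))$ with $\nu(A\times B)$, I obtain
\[
\nu_{Tx}=(\varphi(x))_*\nu_x\quad\text{for $\mu$-a.e.\ }x\in X,
\]
where $(\varphi(x))_*$ denotes translation by $\varphi(x)$ in $G$.

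Next, for every character $\pi\in\widehat G$, I define the Fourier coefficient function
\[
F_\pi(x):=\int_G \pi(g)\, d\nu_x(g),
\]
which is measurable and bounded by $1$. The transformation rule above immediately gives the cocycle identity
\[
F_\pi(Tx)=\pi(\varphi(x))\,F_\pi(x)\quad\text{for $\mu$-a.e. }x,
\]
so $|F_\pi|$ is $T$-invariant and hence $\mu$-a.e.\ equal to a constant $c_\pi\in[0,1]$ by ergodicity of $T$. The key step is to rule out $c_\pi>0$ for every nontrivial $\pi$: in that case $F_\pi/c_\pi$ would be a measurable function of modulus $1$ satisfying $\pi(\varphi(x))(F_\pi/c_\pi)(x)=(F_\pi/c_\pi)(Tx)$, contradicting the ergodicity of the cocycle $\varphi$ via Theorem~\ref{thm56}. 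Hence $F_\pi\equiv 0$ $\mu$-a.e.\ for every nontrivial character $\pi$.

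Finally, assuming $G$ is compact metrizable (so that $\widehat G$ is countable), I can find a single full-measure set of $x\in X$ on which $\int_G \pi\, d\nu_x=0$ for all nontrivial $\pi$ while $\int_G 1\, d\nu_x=1$. By Peter--Weyl, characters span a dense subspace of $C(G)$, so these Fourier conditions force $\nu_x=\lambda_G$ for $\mu$-a.e.\ $x$, and therefore $\nu=\mu\otimes\lambda_G$. The only real obstacle is the normalization issue in the character argument (the dichotomy $c_\pi=0$ vs.\ $c_\pi>0$) and the countability of $\widehat G$, which needs the metrizability hypothesis implicit in the setting.
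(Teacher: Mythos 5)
Your argument is correct. Note that the paper itself states Theorem~\ref{thm57} without proof (it is quoted as a known ``relative unique ergodicity'' property of ergodic cocycles), so there is no in-paper proof to compare against; your Fourier-analytic reduction to Theorem~\ref{thm56} is the standard route and all the steps go through. The transformation rule $\nu_{Tx}=(\varphi(x))_*\nu_x$ for $\mu$-a.e.\ $x$ follows from $(T_\varphi)_*\nu=\nu$, $T_*\mu=\mu$ and uniqueness of disintegration; the identity $F_\pi(Tx)=\pi(\varphi(x))F_\pi(x)$ then gives $|F_\pi|$ $T$-invariant, hence constant $c_\pi$ by ergodicity of the base, and the dichotomy is handled exactly as you say: $c_\pi>0$ for a nontrivial $\pi$ would produce a modulus-one solution of \eqref{cohomeq}, contradicting ergodicity of $T_\varphi$ via Theorem~\ref{thm56}. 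Two small remarks. First, the metrizability of $G$ you invoke is indeed implicit in the paper's standing assumptions (everything lives on standard Borel/compact metric spaces), so countability of $\widehat G$ is not an issue. Second, you can avoid that issue altogether, and the simultaneous a.e.\ identification of all Fourier coefficients of $\nu_x$, by testing $\nu$ directly against functions $f\otimes\pi$ with $f\in L^\infty(\mu)$ and $\pi\in\widehat G$: one gets $\int f\otimes\pi\,d\nu=\int f\,F_\pi\,d\mu$, which is $0$ for nontrivial $\pi$ and $\int f\,d\mu$ for the trivial character, and density of the span of such functions yields $\nu=\mu\otimes\lambda_G$ without any countability argument.
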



\subsection{Elements of spectral theory}
Let us recall here some notions and facts concerning the spectral theory of dynamical systems. Let $(X,\mathcal B,\mu,T)$ be a dynamical system. For any $f\in L^2(X,\mathcal B,\mu)$ we define the \emph{spectral measure} $\sigma_{f,\mu}$ {(a finite positive measure defined on $\mathbb{S}^1$)} which, via the Herglotz theorem, is given by its Fourier transform:
\begin{equation}\label{spme}
\hat\sigma_{f,\mu}(n):= \int_{\mathbb{S}^1}z^n\,d\sigma_{f,\mu}=\langle f\circ T^{-n},f\rangle_{L^2(\mu)}:=\int_{X}f\circ T^{-n}\cdot \overline f\,d\mu
\end{equation}
for $n\in\mathbb Z$.
{Among spectral measures of functions in $$L^2_0(X,\mathcal B,\mu)=\left\{g\in L^2(X,\mathcal B,\mu):\  \int g\,d\mu=0\right\}$$ there are dominating ones (in the absolute continuity sense). All such must be equivalent, and their equivalence class is called the \emph{maximal spectral type} of $U_T$.}
If $\mu$ is understood, we write $\hat\sigma_{f}$ instead of $\hat\sigma_{f,\mu}$. Note that $\sigma_f(\mathbb{S}^1)=\|f\|_{L^2(\mu)}^2$.
Moreover, $f$ is an eigenfunction of $U_T$ if and only if $\sigma_{f,\mu}$ is the Dirac measure at the corresponding eigenvalue.

We now show some measurability results concerning spectral measures.

\begin{lemma}\label{msrbl:mes}
	Let  $X$ be a compact metric space and $T:X\to X$ be a homeomorphism. Let 
	$f\in C(X)$ be a complex-valued continuous function such that $|f|=1$. Then 
	the map $F:\mathscr M(X,T)\mapsto\mathscr M(\mathbb{S}^1)$ defined as
	\[
	F(\eta):=\sigma_{f,\eta}
	\]
	is continuous.
\end{lemma}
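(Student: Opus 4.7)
The plan is to reduce continuity of $F$ to the continuity of a countable family of scalar-valued functionals on $\mathscr{M}(X,T)$, namely the Fourier coefficients of $\sigma_{f,\eta}$. Since $\mathbb{S}^1$ is a compact metric space, the trigonometric polynomials are uniformly dense in $C(\mathbb{S}^1)$ by Stone--Weierstrass. Consequently, a sequence (or net) of finite positive measures $\sigma_k$ on $\mathbb{S}^1$ with uniformly bounded total mass converges weakly-$*$ to $\sigma$ if and only if $\widehat{\sigma_k}(n)\to\widehat{\sigma}(n)$ for every $n\in\mathbb Z$. Because $\sigma_{f,\eta}(\mathbb S^1)=\|f\|_{L^2(\eta)}^2=1$ for every $\eta\in\mathscr M(X,T)$ (here we use $|f|=1$), the total masses are automatically controlled. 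Hence it suffices to prove, for each fixed $n\in\mathbb Z$, that the map
\[
\eta\longmapsto \widehat{\sigma_{f,\eta}}(n)
\]
is continuous on $\mathscr{M}(X,T)$ equipped with the weak-$*$ topology.

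For the second step I would simply invoke the defining identity \eqref{spme}:
\[
\widehat{\sigma_{f,\eta}}(n)=\int_X (f\circ T^{-n})\,\overline{f}\,d\eta.
\]
The function $g_n:=(f\circ T^{-n})\overline{f}$ belongs to $C(X)$, because $f\in C(X)$ and $T$ is a homeomorphism, so $T^{-n}$ is continuous. Therefore $\eta\mapsto\int g_n\,d\eta$ is, by the very definition of the weak-$*$ topology on $\mathscr{M}(X)$, a continuous functional; its restriction to the closed subset $\mathscr M(X,T)$ remains continuous. Combining this with the reduction from the first paragraph yields the continuity of $F$.

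There is really no substantial obstacle here: the argument is essentially a bookkeeping verification that the two weak-$*$ topologies involved are compatible through the Fourier transform, and that the Fourier coefficients are, term by term, weak-$*$ continuous functionals of $\eta$. The only mild point to note is that one must either argue by sequences (which is legitimate since $\mathscr M(X)$ is metrizable, being the space of Borel probability measures on a compact metric space) or work with nets directly; in either formulation the proof is immediate once the Stone--Weierstrass reduction has been made.
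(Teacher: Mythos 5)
Your proof is correct and follows essentially the same route as the paper: continuity of each Fourier coefficient $\eta\mapsto\int_X (f\circ T^{-n})\overline{f}\,d\eta$ from the weak-$*$ topology on $\mathscr M(X,T)$, combined with the fact that coefficientwise convergence of the spectral measures (with total mass fixed at $1$, by $|f|=1$) implies weak-$*$ convergence on $\mathbb S^1$ via density of trigonometric polynomials. The paper merely packages the same argument as a composition $F=F_2\circ F_1$ through $\mathbb D^{\mathbb Z}$ and invokes Herglotz explicitly; no substantive difference.
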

\begin{proof}
	Recall that by the definition of weak$^\ast$-convergence in 
	$\mathscr{M}(X)$, for every $g\in C(X)$ the map 
	$\eta\mapsto\int_{X}g\,d\eta$ is continuous. Thus the function
	\begin{equation}\label{msrbl1}
	\mathscr M(X,T)\ni\eta\xmapsto{{F_1}} (\hat\sigma_{f,\eta}(m))_{m\in 
	\n}=\left(\int_{X}f\circ T^{-m}\cdot \overline f\, d\eta\right)_{m\in 
	\mathbb Z}
	\end{equation}
	is continuous.
	It follows that the map $F_1:\mathscr M(X,T)\to \mathbb{D}^{\mathbb Z}$ 
	(recall that $\mathbb{D}=\{z\in \mathbb C:|z|\leq 1\}$ and on 
	$\mathbb{D}^\mathbb{Z}$ we consider the usual product metric $d$) given by
	\[
	F_1(\eta):= \left(\int_{X}f\circ T^{-m}\cdot \overline f\, d\eta\right)_{m\in \mathbb Z}
	\]
	is continuous. Since $\mathscr M(X,T)$ is compact, the image 
	$\Upsilon:=F_1(\mathscr M(X,T))$ is also compact in $\mathbb D^{\mathbb Z}$.
	
	Now, we define a function $F_2:\Upsilon\to \mathscr M(\mathbb S ^1)$ in the 
	following way:
	\[
	F_2((a_m)_{m\in\mathbb Z})=\sigma,\ \text{where}\ \hat\sigma(m)=a_m.
	\]
	It is well defined via the Herglotz theorem.
	Note that since the family $\{z^m\}_{m\in\mathbb Z}$ is linearly dense in $C(\mathbb S^1)$, the map $F_2$ is also continuous. Indeed, let $(a_m^n)_{m\in\mathbb Z},(b_m)_{m\in\mathbb Z}\in\Upsilon$ and $\sigma_n:=F_2((a_m^n)_{m\in\mathbb Z})$,
	$\sigma:=F_2((b_m)_{m\in\mathbb Z})$ and assume that $d((a_m^n)_{m\in\mathbb Z},(b_m)_{m\in\mathbb Z})\to 0$ as $n\to\infty$ in $\Upsilon$.
	Then, for every $m\in\mathbb Z$, we have $\int_{\mathbb T}z^m\,d\sigma_n\to \int_{\mathbb T}z^m\, d\sigma$. Since the functions $z^m$ are linearly dense, this yields the continuity of $F_2$. Since $F=F_2\circ F_1$, this finishes the proof.
\end{proof}
\begin{lemma}\label{msrbl:val}
	Let  $X$ be a compact metric space. Let $T:X\to X$ be a homeomorphism and 
	let $f\in C(X)$, $|f|=1$. Then, for every $\al\in\mathbb S ^1$, the map 
	$G:\mathscr M(X,T)\mapsto[0,1]$ defined as
	\[
	G(\eta):=\sigma_{f,\eta}(\{\al\})
	\]
	is measurable.
\end{lemma}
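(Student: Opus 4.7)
The plan is to express $\sigma(\{\alpha\})$ through the Fourier coefficients of $\sigma$ (which we already know depend continuously on $\eta$ by Lemma~\ref{msrbl:mes}) and thereby exhibit $G$ as a pointwise limit of continuous functions.

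The main tool is the classical Wiener-type formula: for any finite positive Borel measure $\sigma$ on $\mathbb{S}^1$ and any $\alpha\in\mathbb{S}^1$,
\[
\sigma(\{\alpha\})=\lim_{N\to\infty}\frac{1}{2N+1}\sum_{n=-N}^{N}\hat\sigma(n)\alpha^{-n}.
\]
To see this, I would swap sum and integral to rewrite the right-hand side as $\int_{\mathbb{S}^1}K_N(z\alpha^{-1})\,d\sigma(z)$ with
\[
K_N(w):=\frac{1}{2N+1}\sum_{n=-N}^{N}w^n.
\]
For $w=1$ one has $K_N(w)=1$, while for $w\in\mathbb{S}^1\setminus\{1\}$ a geometric-series computation gives $|K_N(w)|\le \frac{2}{(2N+1)|w-1|}\to 0$; in particular $|K_N|\le 1$ uniformly. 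Dominated convergence then yields the formula.

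Applying this to $\sigma=\sigma_{f,\eta}$ gives
\[
G(\eta)=\sigma_{f,\eta}(\{\alpha\})=\lim_{N\to\infty}G_N(\eta),\qquad G_N(\eta):=\frac{1}{2N+1}\sum_{n=-N}^{N}\alpha^{-n}\int_{X}f\circ T^{-n}\cdot\overline f\,d\eta,
\]
where I used the defining formula \eqref{spme} for the Fourier coefficients of $\sigma_{f,\eta}$. By Lemma~\ref{msrbl:mes} (or, equivalently, since each integrand $f\circ T^{-n}\cdot\overline f$ is continuous on $X$) each map $\eta\mapsto \hat\sigma_{f,\eta}(n)$ is continuous on $\mathscr M(X,T)$; hence the finite linear combination $G_N$ is continuous for every $N$.

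Since $G$ is the pointwise limit of the continuous functions $G_N$, it is of Baire class one and in particular Borel measurable. (The values $G_N(\eta)$ are a priori complex, but their limit is the real number $\sigma_{f,\eta}(\{\alpha\})\in[0,1]$; one may equally well replace $G_N$ by $\operatorname{Re}G_N$, which is clearly real-valued and continuous, without changing the argument.) There is no real obstacle here beyond setting up Wiener's formula correctly; the measurability is then immediate from the continuity of the Fourier coefficients established in Lemma~\ref{msrbl:mes}.
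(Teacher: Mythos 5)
Your proof is correct, and it follows the same underlying strategy as the paper: exhibit $G$ as a pointwise limit of functions that are continuous on $\mathscr M(X,T)$, using dominated convergence for the pointwise limit and the continuity statement of Lemma~\ref{msrbl:mes} for the approximants. The only difference is the choice of approximating kernel: the paper takes an arbitrary sequence $(g_n)\subset C(\mathbb S^1)$, bounded by $1$ and converging pointwise to $\chi_{\{\al\}}$, and composes the resulting continuous functionals with the Herglotz map $F_2$, whereas you use the explicit Dirichlet averages $K_N(z\al^{-1})=\frac{1}{2N+1}\sum_{|n|\le N}(z\al^{-1})^n$, i.e.\ Wiener's formula. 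Your variant has the small advantage of needing only the continuity of the Fourier-coefficient maps $\eta\mapsto\hat\sigma_{f,\eta}(n)$ (the map $F_1$ in the proof of Lemma~\ref{msrbl:mes}), bypassing the reconstruction of $\sigma_{f,\eta}$ as a measure, and it dovetails with the Wiener-lemma computation the paper uses later in Lemma~\ref{lem:wm}; the paper's version is softer and makes the structure ``measurable $=$ Baire class one composed with continuous'' explicit. Your parenthetical handling of the a priori complex values of $G_N$ (or the observation that $K_N$ restricted to $\mathbb S^1$ is in fact real-valued) settles the one minor point that needed attention.
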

\begin{proof}
	Fix $\al\in\mathbb S ^1$.
	In view of Lemma \ref{msrbl:mes}, the map $F:\mathscr M(X,T)\mapsto\mathscr 
	M(\mathbb S^1)$ given by the formula $F(\eta)=\sigma_{f,\eta}$ is 
	continuous. In particular, the set $\Omega=F(\mathscr M(X,T))$ is compact 
	in the weak$^\ast$-topology. It is thus enough to show that the map 
	$G':\mathscr M (\mathbb S^1)\mapsto[0,1]$ defined as 
	$G'(\sigma):=\sigma(\{\al\})$ is measurable.
	
	Let $(g_n)_{n\in\n}\subset C(\mathbb S^1)$ be a  sequence of (bounded by 
	$1$) real continuous functions converging pointwise to the indicator 
	function $\chi_{\{\al \}}$. Define, for every $n\in\n$, the map 
	$G_n':\mathscr M(\mathbb S ^1)\to \mathbb{R}$ as 
	$G_n'(\sigma)=\int_{\mathbb T}g_n\,d\sigma$ which is continuous. Then, 
	$\lim_{n\to\infty}G_n'=G'$ pointwise. As a point limit of continuous 
	functions, $G'$ is measurable. Since $G=G'\circ F$, this finishes the proof.
\end{proof}
{
\begin{remark}\label{rem:zeroint}
Notice that if $F$ and $G$ are defined as $F(\eta):=\sigma_{f-\int f\, d \eta,\eta}$ and $G(\eta):=\sigma_{f-\int f\, d\eta,\eta}(\{\alpha\})$, then, by repeating proofs of Lemma \ref{msrbl:mes} and Lemma \ref{msrbl:val}, we get that $F$ is cotninuous and $G$ is measurable.
\end{remark}}
	{We will also make use of the following fact based on spectral theory that provides a tool to detect eigenvalues of non-ergodic dynamical systems.
	\begin{lemma}\label{lm: nonergeigenv}
	Let $X$ be a compact metric space. Let $\mu\in \mathscr M(X,T)$, where $T$ 
	is a homeomorphism of $T$. Let also 
	\[
	\mu=\int_{\overline X}\mu_{\bar x}\,dP(\bar x)
	\]
	be the ergodic decomposition of $T$ and let $T_{\bar x}$ denote the fiber automorphism, corresponding to $\bar x\in\overline X$. Then $\alpha\in\mathbb S^1$ is an eigenvalue of $(T,\mu_{\bar x})$ for a $P$-positive measure set of $\bar x$ if and only if, $\alpha$ is an eigenvalue of $(T,\mu)$.
	\end{lemma}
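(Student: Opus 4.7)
The plan is to translate ``$\alpha$ is an eigenvalue'' into the language of spectral measures and transfer between $\mu$ and its ergodic fibers $\mu_{\bar x}$ by means of the disintegration
\[
\sigma_{f,\mu}=\int_{\overline X}\sigma_{f,\mu_{\bar x}}\,dP(\bar x),
\]
valid for every $f\in C(X)$: both sides have, by Fubini, the same Fourier coefficients $\langle f\circ T^{-n},f\rangle$, and then Herglotz identifies them as measures on $\mathbb S^1$. For the direction $(\Leftarrow)$, I would pick an eigenfunction $F\in L^2(\mu)$, $F\neq 0$, with $F\circ T=\alpha F$, and disintegrate: from $\|F\|^2_{L^2(\mu)}=\int\|F\|^2_{L^2(\mu_{\bar x})}\,dP(\bar x)$ we have $F\in L^2(\mu_{\bar x})$ for $P$-a.e.\ $\bar x$, and since $F\circ T=\alpha F$ holds $\mu$-a.e.\ it holds $\mu_{\bar x}$-a.e.\ for $P$-a.e.\ $\bar x$. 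On the set $\{\bar x:\|F\|_{L^2(\mu_{\bar x})}>0\}$, which has positive $P$-measure, $F$ is therefore a genuine eigenfunction of $T_{\bar x}$ for the eigenvalue $\alpha$.

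For the direction $(\Rightarrow)$, let $A:=\{\bar x:\alpha\text{ is an eigenvalue of }T_{\bar x}\}$, $P(A)>0$. The naive obstacle here is that the fiber eigenfunctions are defined only up to phase and need not assemble into a measurable section of $\bar x\mapsto\phi_{\bar x}\in L^2(\mu_{\bar x})$. I would sidestep this by fixing a countable family $\{f_n\}_{n\ge 1}\subset C(X)$ dense in the uniform norm, hence dense in every $L^2(\mu_{\bar x})$ since $\|\cdot\|_{L^2(\eta)}\le\|\cdot\|_\infty$ for any probability measure $\eta$. For each $\bar x\in A$ the $\alpha$-eigenspace in $L^2(\mu_{\bar x})$ is nonzero and cannot be orthogonal to all $f_n$, so some $n$ yields $\sigma_{f_n,\mu_{\bar x}}(\{\alpha\})>0$. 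Setting $A_n:=\{\bar x\in A:\sigma_{f_n,\mu_{\bar x}}(\{\alpha\})>0\}$, each $A_n$ is measurable by Lemma \ref{msrbl:val} (whose proof uses only continuity of $f$, not $|f|=1$), and $A=\bigcup_n A_n$. Therefore some $A_{n_0}$ has positive $P$-measure, and the disintegration formula gives
\[
\sigma_{f_{n_0},\mu}(\{\alpha\})\ge\int_{A_{n_0}}\sigma_{f_{n_0},\mu_{\bar x}}(\{\alpha\})\,dP(\bar x)>0,
\]
so $\alpha$ is an eigenvalue of $(T,\mu)$.

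The main obstacle is thus the forward implication, where a direct attempt would need a measurable selection of unit-modulus eigenfunctions $\phi_{\bar x}$ over the ergodic decomposition. The countable-dense-family trick replaces this by a countable exhaustion of $A$; the only measurability input one has to import is that, for a fixed continuous $f$, $\bar x\mapsto\sigma_{f,\mu_{\bar x}}(\{\alpha\})$ is measurable, which is precisely the content of Lemma \ref{msrbl:val} (and of Remark \ref{rem:zeroint}) and explains why those measurability lemmas were set up beforehand.
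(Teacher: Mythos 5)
Your proof is correct and follows essentially the same route as the paper's: disintegrating the spectral measures $\sigma_{f,\mu}=\int_{\overline X}\sigma_{f,\mu_{\bar x}}\,dP(\bar x)$ over the ergodic decomposition, exploiting a countable uniformly dense family $\{f_n\}\subset C(X)$, and invoking Lemma~\ref{msrbl:val} for the measurability of $\bar x\mapsto\sigma_{f_n,\mu_{\bar x}}(\{\alpha\})$. The only (harmless) deviation is that you treat the implication from an eigenvalue of $(T,\mu)$ to the fibers by directly disintegrating an eigenfunction, whereas the paper runs both implications through the same chain of spectral-measure equivalences.
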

\begin{proof}
	Let $\alpha\in\mathbb{S}^1$ be as in the assumption. 
	Consider a sequence $\{f_n\}_{n\in \n}$ of functions which are dense in the space of complex continuous functions on $X$.
	Note that $\sigma_{f_n,\mu}=\int_{\overline{X}}\sigma_{f_n,\mu_{\bar x}}\,dP(\bar x)$. Indeed, the $m$-th coefficient of Fourier transform of the RHS measure equals
	\[
	\begin{split}
	\int_{\overline{X}}&\int_{\mathbb S^1} z^{-m}\, d\sigma_{f_n,\mu_{\bar x}}\,dP(\bar x)\\&=\int_{\overline{X}}\int_{X}f_n\circ T^{-m}\cdot \bar f_n(x)\,d\mu_{\bar x}(x)\,dP(\bar x)\\
	&=\int_{X}f_n\circ T^{-m}\cdot \bar f_n(x)\,d\mu(x)=\hat\sigma_{f_n,\mu}(m).
	\end{split}
	\]
	Then the following conditions are equivalent:
	\begin{itemize}
		\item $\al$ is an eigenvalue of $(T,\mu)$;
		\item there exists $n\in\n$ such that $\sigma_{f_n,\mu}(\{\alpha\})>0$;
		\item there exists $n\in\n$ and a $P$-positive measure set of $\bar x$ such that $\sigma_{f_n,\mu_{\bar x}}(\{\alpha\})>0$;
		\item there exists a $P$-positive measure set of $\bar x$ such that $\alpha$ is an eigenvalue.
	\end{itemize}

%
	\end{proof}
	}
	
\begin{corollary}\label{cor: Xn0mes}
Let $T\in Erg^\perp$. For any $\alpha\in\mathbb S^1\setminus\{1\}$, the set of ergodic components of $T$ which have $\alpha$ as an eigenvalue has $0$ measure wrt. $P$. In particular, $P(\overline X_n)=0$ for every $n\ge 2$.
\end{corollary}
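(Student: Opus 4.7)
The plan is to deduce the corollary directly from Lemma \ref{lm: nonergeigenv} combined with the observation \eqref{commonev} already recorded in the paper.

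For the first assertion, I argue by contradiction. Suppose $T\in Erg^\perp$ and there exists $\alpha\in\mathbb{S}^1\setminus\{1\}$ such that the set $A:=\{\bar x\in\overline X : \alpha \text{ is an eigenvalue of }T_{\bar x}\}$ satisfies $P(A)>0$. By Lemma \ref{lm: nonergeigenv} applied to $(T,\mu)$ and the fiber decomposition, this immediately puts us in the situation of \eqref{commonev}, so $T$ has the ergodic rotation by $\alpha$ as a factor and therefore $T\notin Erg^\perp$, contradicting the hypothesis.

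For the second assertion, fix $n\ge 2$ and recall from \eqref{ed} that on $\overline{X}_n\times \mathbb{Z}_n$ the fiber automorphism $T_{\bar x}$ acts as the shift $i\mapsto i+1$ on the uniform measure on $\mathbb{Z}_n$. This is just a cyclic permutation of $n$ points, whose spectrum consists of the $n$-th roots of unity; in particular $\alpha_n:=e^{2\pi i/n}\in\mathbb{S}^1\setminus\{1\}$ is an eigenvalue of $T_{\bar x}$ for every $\bar x\in\overline{X}_n$. Thus if $P(\overline{X}_n)>0$, the first part of the corollary is violated with $\alpha=\alpha_n$, giving a contradiction. Hence $P(\overline{X}_n)=0$ for every $n\ge 2$.

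No real obstacle is anticipated; the whole corollary is a transparent repackaging of the observation \eqref{commonev} (eigenvalues of fibers on a $P$-positive set lift to genuine eigenvalues of $T$), with the only care needed being to point out explicitly that each periodic fiber type $\overline{X}_n$ automatically carries a non-trivial eigenvalue of $T_{\bar x}$ among the $n$-th roots of unity.
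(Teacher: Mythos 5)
Your argument is correct and is exactly the route the paper intends: the corollary is stated as an immediate consequence of Lemma \ref{lm: nonergeigenv} together with \eqref{commonev}, which is precisely how you deduce the first assertion, and your observation that each fiber over $\overline{X}_n$ ($n\ge 2$) is a cyclic permutation carrying the non-trivial eigenvalue $e^{2\pi i/n}$ is the intended reason for the second assertion. Nothing is missing.
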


The following fact is a folklore, but for the sake of completeness of the presentation we recall its proof.
\begin{lemma}\label{lem:wm}
	Let $X$ be a compact metric space. Let $\mu\in \mathscr M(X,T)$, where $T$ 
	is a homeomorphism of $T$. Let also 
	\[
	\mu=\int_{\overline X}\mu_{\bar x}\,dP(\bar x)
	\]
	be the ergodic decomposition of $T$ and let $T_{\bar x}$ denote the fiber automorphism, corresponding to $\bar x\in\overline X$. Then the set
	\[
	\mathcal{WM} :=\{\bar x{ \,\in \overline X };\   T_{\bar x}\text{ is weakly mixing} \}
	\]
	is measurable.
\end{lemma}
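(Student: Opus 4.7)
The plan is to use Wiener's lemma: a finite positive Borel measure $\sigma$ on $\mathbb{S}^1$ is atomless if and only if
\[
\lim_{N\to\infty}\frac{1}{2N+1}\sum_{k=-N}^{N}|\hat\sigma(k)|^2=0.
\]
Combined with the classical spectral criterion that $(T,\mu_{\bar x})$ is weakly mixing iff $\sigma_{f,\mu_{\bar x}}$ is atomless for every $f\in L^2_0(\mu_{\bar x})$, this reduces the problem to checking that a countable family of Cesaro averages of Fourier coefficients of spectral measures depends measurably on $\bar x$.

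Concretely, fix a sup-norm-dense countable family $\{f_n\}_{n\ge 1}\subset C(X)$ and set $c_n(\bar x):=\int f_n\,d\mu_{\bar x}$, a measurable function of $\bar x$ since $\bar x\mapsto\mu_{\bar x}$ is measurable into $\mathscr{M}(X)$. The orthogonal projection of $f_n$ onto $L^2_0(\mu_{\bar x})$ is $f_n-c_n(\bar x)$, and the $T$-invariance of $\mu_{\bar x}$ gives
\[
\hat\sigma_{f_n-c_n(\bar x),\mu_{\bar x}}(k)=\int f_n\circ T^{-k}\cdot\overline{f_n}\,d\mu_{\bar x}-|c_n(\bar x)|^2,
\]
which is measurable in $\bar x$ for each fixed $k,n$, the first term being the integral of a continuous function against $\mu_{\bar x}$. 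Hence the averages
\[
A_N^n(\bar x):=\frac{1}{2N+1}\sum_{k=-N}^{N}\bigl|\hat\sigma_{f_n-c_n(\bar x),\mu_{\bar x}}(k)\bigr|^2
\]
are measurable in $\bar x$, and
\[
\mathcal{WM}=\bigcap_{n\ge 1}\Bigl\{\bar x\in\overline{X}:\,\lim_{N\to\infty}A_N^n(\bar x)=0\Bigr\}
\]
is a countable intersection of Borel sets, hence measurable.

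The main obstacle is justifying the reduction to the countable family $\{f_n-c_n(\bar x)\}_n$ rather than all of $L^2_0(\mu_{\bar x})$. Let $P_\alpha$ denote the orthogonal projection onto the $\alpha$-eigenspace of $U_T$ acting on $L^2(\mu_{\bar x})$; Wiener's identity reads $\sigma_{g,\mu_{\bar x}}(\{\alpha\})=\|P_\alpha g\|^2$ for every $g\in L^2(\mu_{\bar x})$. If $T_{\bar x}$ is not weakly mixing, pick a non-zero eigenfunction $\phi\in L^2_0(\mu_{\bar x})$ with eigenvalue $\alpha$. Since $C(X)$ is $L^2(\mu_{\bar x})$-dense (Lusin) and $\{f_n\}$ is sup-norm, hence $L^2(\mu_{\bar x})$, dense in $C(X)$, some subsequence $f_{n_k}\to\phi$ in $L^2(\mu_{\bar x})$. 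Then $c_{n_k}(\bar x)\to 0$ and $f_{n_k}-c_{n_k}(\bar x)\to\phi$ in $L^2(\mu_{\bar x})$, so by continuity of $P_\alpha$ we have $\|P_\alpha(f_{n_k}-c_{n_k}(\bar x))\|\to\|\phi\|>0$. Thus some $n_k$ produces an atom of $\sigma_{f_{n_k}-c_{n_k}(\bar x),\mu_{\bar x}}$ at $\alpha$, which by Wiener's lemma forces $A_N^{n_k}(\bar x)\not\to 0$, placing $\bar x$ outside the intersection. This shows that the Wiener condition along the countable family $\{f_n-c_n(\bar x)\}_n$ exactly characterises weak mixing of $T_{\bar x}$, completing the argument.
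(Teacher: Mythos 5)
Your proof is correct and follows essentially the same route as the paper: Wiener's lemma applied to the spectral measures of the centered functions $f_n-\int f_n\,d\mu_{\bar x}$ from a countable sup-norm-dense family in $C(X)$, together with measurability of $\bar x\mapsto\mu_{\bar x}$. The only difference is presentational — you check measurability of the Fourier coefficients directly rather than factoring through the continuity of $\eta\mapsto\sigma_{f-\int f\,d\eta,\eta}$ on $\mathscr M(X,T)$, and your eigenprojection argument makes explicit the reduction to the countable family, a point the paper's proof only indicates with ``since $f$ is arbitrary''.
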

\begin{proof}
	{Let $\{f_n\}_{n\in\n}$ be a dense family in $C(X)$ and fix $f=f_n$}. 
	Recall that an automorphism is weakly mixing iff it has no non-trivial 
	eigenvalues or, in other words, all spectral measures on $L_0^2$ are 
	continuous. Let us recall that for any measure $\sigma\in\mathscr M(\mathbb 
	S^1)$, by Wiener's Lemma, we have
	\[
	\lim_{N\to\infty}\frac{1}{N}\sum_{i=0}^{N-1}|\hat\sigma(i)|^2={\sum_{\alpha \text{ is an atom of }\sigma} \sigma^2(\{\alpha\})}.
	\]
	{It is enough to show that above equality holds for $\sigma=\sigma_{f-\int 
	f\, d \mu,\mu}$}. Note that the function $H_N:\mathscr M(\mathbb S^1)\to 
	\mathbb R_{\ge 0}$ given by 
	$H_N(\sigma):=\frac{1}{N}\sum_{i=0}^{N-1}|\hat\sigma(i)|^2$, by the 
	definition of weak*-convergence,  is continuous. Thus the function 
	$H:\mathscr M(\mathbb S^1)\to \mathbb R_{\ge 0}$ given by 
	$H(\sigma):=\lim_{N\to\infty}H_N(\sigma)$, as a point-wise limit of 
	continuous functions, is measurable.

	By Remark \ref{rem:zeroint}, the map $F:\mathscr M(X,T)\mapsto \mathscr 
	M(\mathbb S^1)$ is continuous. Moreover, by the properties of 
	disintegration of measures, the assignment $E:\overline X\to \mathscr 
	M(X,T)$ given by $E(\bar x)=\mu_{\bar x}$ is also measurable. Thus the map 
	$H\circ F\circ E$ is measurable. It remains to notice that, {since $f$ is 
	arbitrary}, $\mathcal{WM}=(H\circ F\circ E)^{-1}(\{0\}).$
\end{proof}

We also recall that if $T\in \operatorname{Aut}(X,\mathcal{B},\mu)$ and $f,g\in L^2_0(X,\mathcal{B},\mu)$ then $f\perp g$ whenever $\sigma_f\perp \sigma_g$, i.e.\ when the spectral measures are mutually singular. It follows that two automorphisms are disjoint whenever the maximal spectral types on the relevant $L^2_0$ spaces are mutually singular \cite{Ha-Pa}.

	\section{Identity is a multiplier of $Erg^\perp$}\label{sec: idinM}
	
The following fact is classical {(and follows from the spectral disjointness of ergodic automorphism with the identity maps), we recall the classical proof for completeness:
\begin{proposition}\label{idperp}
	Any identity map is disjoint with all ergodic systems.
\end{proposition}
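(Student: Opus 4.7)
The plan is to apply the spectral disjointness criterion recalled just before the statement: two systems are disjoint whenever the maximal spectral types of their Koopman operators on the zero-mean subspaces $L^2_0$ are mutually singular. So, given an identity map $Id$ on some $(X,\mathcal B,\mu)$ and an ergodic system $(Y,\mathcal C,\nu,S)$, I only need to identify and compare the two maximal spectral types.

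First, I would observe that $U_{Id}$ is the identity operator on $L^2(\mu)$, so every $f\in L^2(\mu)$ satisfies $U_{Id}f=f$. Plugging this into the definition \eqref{spme} immediately gives $\sigma_{f,\mu}=\|f\|^2_{L^2(\mu)}\,\delta_1$ for every $f\in L^2_0(\mu)$, where $\delta_1$ denotes the Dirac mass at $1\in\mathbb{S}^1$. Consequently the maximal spectral type of $U_{Id}$ on $L^2_0(\mu)$ is either trivial (when $\mu$ is a point mass, in which case disjointness with anything is immediate) or equivalent to $\delta_1$.

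Next, I would use the classical reformulation of ergodicity of $(Y,\mathcal C,\nu,S)$: the only $U_S$-invariant functions in $L^2(\nu)$ are the constants. Therefore no $g\in L^2_0(\nu)$ can be a $U_S$-eigenfunction associated with the eigenvalue $1$, so $\sigma_{g,\nu}(\{1\})=0$ for every $g\in L^2_0(\nu)$. In particular, the maximal spectral type of $U_S$ on $L^2_0(\nu)$ carries no atom at $1$, and so it is mutually singular with $\delta_1$.

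Combining these two observations, the maximal spectral types on the relevant $L^2_0$ subspaces are mutually singular, and the spectral disjointness criterion recalled above yields $Id\perp S$. No genuine obstacle is anticipated here; the whole content of the argument is the identification of the maximal spectral type of $U_{Id}$ with $\delta_1$ and the observation that ergodicity excludes an atom at $1$ in the spectral type on $L^2_0$ of an ergodic system.
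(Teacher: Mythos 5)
Your proof is correct, and it is precisely the spectral route that the paper alludes to in the sentence preceding the statement (``follows from the spectral disjointness of ergodic automorphisms with the identity maps'') but does not write out; the paper's actual proof is different and more hands-on. You invoke the Hahn--Parry criterion recalled at the end of the spectral preliminaries: the maximal spectral type of $U_{Id}$ on $L^2_0(\mu)$ is $\delta_1$ (or the space is trivial, in which case disjointness is immediate), while ergodicity of $S$ forces the maximal spectral type of $U_S$ on $L^2_0(\nu)$ to have no atom at $1$ --- indeed, for $g\in L^2_0(\nu)$ the spectral projection of $g$ onto the eigenvalue-$1$ eigenspace is $E(g\mid Inv(S))=\int g\,d\nu=0$, a point you compress slightly but which is standard --- and the two types are then mutually singular. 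The paper instead works directly with a joining $\eta\in J(R,Id)$: by the von Neumann mean ergodic theorem, $\frac1N\sum_{n=0}^{N-1}h\circ R^n\to 0$ in $L^2$ for zero-mean $h$, and the $Id\times R$-invariance of $\eta$ makes each Ces\`aro average of $\int g\otimes h\circ R^n\,d\eta$ equal to $\int g\otimes h\,d\eta$, which therefore vanishes. Your version buys brevity by relying on the spectral disjointness criterion as a black box; the paper's version is self-contained at the level of the mean ergodic theorem and avoids any appeal to maximal spectral types. Either argument fully establishes the proposition.
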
}
\begin{proof} 
Assume that $(Z,\mathcal{D},\rho,R)$ is ergodic and consider the identity on $(Y,\mathcal{C},\nu)$. Let $\eta\in J(R,Id)$. Take $h\in L^2(Z,\rho)$ of zero mean and let $g\in L^2(Y,\nu)$. By the von Neumann theorem (and ergodicity)
	$$
	\frac1N\sum_{n=0}^{N-1}h\circ R^n\to 0\text{ in } L^2(Z,\rho),
	$$
	so the same convergence takes place also in $L^2(Z\times Y,\eta)$. Since the strong convergence implies the weak convergence,
	$$
	\frac1N\sum_{n=0}^{N-1}\int g\otimes h\circ R^n\,d\eta\to0.$$
	But, for each $N\geq1$,  $\int \frac1N\sum_{n=0}^{N-1} (g\otimes h\circ (Id\times R)^n)\,d\eta=\int g\otimes h\,d\eta$, whence $\int g\otimes h\,d\eta=0$.
	\end{proof}
	
{We can slightly generalize this result by showing the following:
\begin{lemma}\label{l:l4} Assume that $(Z,\mathcal{D},\rho,R)$, $(Z',\mathcal{D}',\rho',R')$ are ergodic and $R\perp R'$. Consider the identity on $(Y,\mathcal{C},\nu)$. Then  $R\perp R'\times Id$ (the latter automorphism, a fortiori, considered with product measure).\end{lemma}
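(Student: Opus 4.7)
The plan is to analyze an arbitrary joining of $R$ with $R'\times Id$ by disintegrating over the $Y$-coordinate and exploiting both assumed disjointnesses ($R\perp Id$ coming from Proposition~\ref{idperp}, and the given $R\perp R'$).

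Concretely, let $\lambda\in J(R,R'\times Id)$, viewed as a measure on $Z\times Z'\times Y$. The first step is to establish that the projection of $\lambda$ onto $Z\times Y$ is the product measure $\rho\otimes\nu$. This projection is a joining of $R$ with the identity on $(Y,\mathcal{C},\nu)$, so Proposition~\ref{idperp} applies and gives exactly the product. Meanwhile, by hypothesis the projection onto $Z'\times Y$ is already $\rho'\otimes\nu$.

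Next, I disintegrate $\lambda$ over the $Y$-coordinate to obtain a measurable family $(\lambda_y)_{y\in Y}$ of probability measures on $Z\times Z'$ such that
\[
\lambda=\int_Y\lambda_y\,d\nu(y).
\]
Because the $Y$-coordinate is fixed by the action $R\times R'\times Id$, the uniqueness of disintegration forces $\lambda_y$ to be $R\times R'$-invariant for $\nu$-a.e.\ $y$. The two product structures established above tell us that for $\nu$-a.e.\ $y$, the marginals of $\lambda_y$ on $Z$ and on $Z'$ are $\rho$ and $\rho'$ respectively; hence $\lambda_y\in J(R,R')$ for $\nu$-a.e.\ $y$. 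The assumption $R\perp R'$ now yields $\lambda_y=\rho\otimes\rho'$ for $\nu$-a.e.\ $y$, and integrating back over $y$ gives $\lambda=\rho\otimes\rho'\otimes\nu$, which is the desired product joining.

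I don't foresee a real obstacle here; the only slightly delicate point is making sure that the two marginal identifications ($\rho\otimes\nu$ on $Z\times Y$ and $\rho'\otimes\nu$ on $Z'\times Y$) genuinely transfer to the conditional measures $\lambda_y$ for a.e.\ $y$, which is a standard consequence of the uniqueness of disintegration. Everything else is essentially a packaging of Proposition~\ref{idperp} with the hypothesis $R\perp R'$.
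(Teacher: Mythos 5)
Your argument is correct, but it is organized differently from the paper's proof. You first apply Proposition~\ref{idperp} to the pair $(R,Id)$ to force the $Z\times Y$-marginal to be $\rho\otimes\nu$, then disintegrate the joining over the $Y$-coordinate, use uniqueness of disintegration to see that the conditional measures $\lambda_y$ are $R\times R'$-invariant with marginals $\rho$ and $\rho'$, and finally invoke $R\perp R'$ fiberwise. The paper proceeds in the opposite order: it first uses $R\perp R'$ to get $\eta|_{Z\times Z'}=\rho\otimes\rho'$, then observes that $(R\times R',\rho\otimes\rho')$ is \emph{ergodic} (disjointness forces the eigenvalue groups to intersect trivially, so the product of the two ergodic systems is ergodic), and concludes in one stroke from Proposition~\ref{idperp} applied to the ergodic system $R\times R'$ versus $Id$. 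The trade-off: the paper's route is shorter and actually proves the stronger statement that \emph{every} $\eta\in J(R,R',Id)$ is the full product, with no assumption on the $Z'\times Y$-marginal, but it relies on the spectral fact that disjoint ergodic automorphisms share no non-trivial eigenvalue; your route avoids any spectral input, using only uniqueness of disintegration, but it genuinely needs the hypothesis that the $Z'\times Y$-marginal is $\rho'\otimes\nu$ (which the lemma does grant, since $R'\times Id$ is taken with product measure), so it yields exactly the stated lemma and not the stronger triple-joining version. Both arguments suffice for the way the lemma is used later in the alternative proof of Proposition~\ref{idperp+}.
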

\begin{proof}
Let $\eta\in J(R,R',Id)$. By the disjointness of $R$ and $R'$, $\eta|_{Z\times Z'}=\rho\otimes\rho'$ and $(R\times R',\rho\otimes\rho')$ is ergodic because their group of eigenvalues must be disjoint (modulo the eigenvalue~1). The claim follows from Proposition \ref{idperp}.

%
\end{proof}}

	We show in this section that any identity is actually also a multiplier of $Erg^\perp$.
	\begin{proposition}\label{idperp+}
	Let $(X,\mathcal B,\mu,T)\in Erg^{\perp}$ and let $(Y,\mathcal C,\nu, Id)$ be an identity map. Consider $\la\in J(T,Id)$. Then
	\[
	(X\times Y, \mathcal B\otimes\mathcal C, \la, T\times Id)\in Erg^\perp.
	\]
	\end{proposition}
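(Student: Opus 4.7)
The plan is to apply the characterization of $Erg^\perp$ given by Theorem~\ref{classification} directly to $(X\times Y,\mathcal B\otimes\mathcal C,\la,T\times Id)$. This reduces the claim to a pure bookkeeping statement about the ergodic decomposition of $\la$.

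First, I would disintegrate $\la$ over its second marginal $\nu$ as
\[
\la = \int_Y \la_y\otimes\delta_y\, d\nu(y),
\]
noting that, because $\la$ is $T\times Id$-invariant and $Id$ fixes each $y$, each $\la_y$ is a $T$-invariant probability measure on $X$. Ergodically decomposing each $\la_y$ in terms of the (shared) space $\overline{X}$ of ergodic components of $T$, one writes $\la_y=\int_{\overline X}\mu_{\bar x}\,dP_y(\bar x)$. The identity $\mu=\int_Y\la_y\,d\nu(y)$ together with uniqueness of the ergodic decomposition of $\mu$ forces $\int_Y P_y\,d\nu(y)=P$.

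Combining these two steps, $\la$ is expressed as an integral of $T\times Id$-invariant probability measures $\mu_{\bar x}\otimes\delta_y$. Each such measure is $T\times Id$-ergodic (the factor $(Id,\delta_y)$ is the one-point system, so any $T\times Id$-invariant $L^2$ function restricted to $\mu_{\bar x}\otimes\delta_y$ is simply a $T$-invariant function on $(X,\mu_{\bar x})$, hence constant). By uniqueness of the ergodic decomposition this therefore \emph{is} the ergodic decomposition of $(T\times Id,\la)$, with component measure $\tilde P$ on $\overline X\times Y$ given by $d\tilde P(\bar x,y)=dP_y(\bar x)\,d\nu(y)$; in particular the $\overline X$-marginal of $\tilde P$ equals $P$. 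Moreover, the fiber automorphism $(T\times Id)_{(\bar x,y)}$ acts on a measure space isomorphic to $(X,\mu_{\bar x})$ and is isomorphic to $T_{\bar x}$.

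It remains to verify the disjointness condition of Theorem~\ref{classification} for $\tilde P\otimes\tilde P$-almost every pair $((\bar x_1,y_1),(\bar x_2,y_2))$. Since the two copies of $\tilde P$ are independent and each has $\overline X$-marginal equal to $P$, the pushforward of $\tilde P\otimes\tilde P$ onto the pair of $\overline X$-coordinates is $P\otimes P$. Because $T\in Erg^\perp$, Theorem~\ref{classification} applied to $T$ gives a $P\otimes P$-conull set of pairs $(\bar x_1,\bar x_2)$ for which $T_{\bar x_1}\perp T_{\bar x_2}$; pulling this set back via the projection produces a $\tilde P\otimes \tilde P$-conull set on which the fiber automorphisms of $T\times Id$ are disjoint. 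Applying Theorem~\ref{classification} in the reverse direction yields $(X\times Y,\la,T\times Id)\in Erg^\perp$. The only point requiring care is the identification of the ergodic decomposition in the second paragraph, which is routine but needs the compatibility between the ergodic decomposition of each $\la_y$ and that of $\mu$; this compatibility comes for free once one parametrizes ergodic $T$-invariant measures by $\overline X$.
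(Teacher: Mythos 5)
Your proof is correct, but it takes a different route from both of the paper's arguments. The paper's primary proof is spectral and avoids Theorem~\ref{classification} entirely: for $\eta\in J(T,Id,R)$ it splits $f=f_1+f_2$ along $L^2(Inv(T))$ and rules out correlations by comparing atoms of spectral measures. The paper's second proof does use Theorem~\ref{classification}, but differently from you: it disintegrates $\la$ over $Inv(T)$ (not over $Y$), obtains $\la_{\bar x}=\mu_{\bar x}\otimes\nu_{\bar x}$ from Proposition~\ref{idperp}, and then, for an arbitrary ergodic $R$ and $\eta\in J(T\times Id,R)$, proves fiberwise that $\eta_{\bar x}=\la_{\bar x}\otimes\rho$ using Proposition~\ref{ergdisj} together with Lemma~\ref{l:l4}; only the ``only if'' direction of Theorem~\ref{classification} enters there (through the continuity of $P$ and Proposition~\ref{ergdisj}). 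You instead never join with an ergodic system: you compute the full ergodic decomposition of $(T\times Id,\la)$ --- components $\mu_{\bar x}\otimes\delta_y$ with parametrizing measure $\tilde P$ whose $\overline X$-marginal is $P$ --- and then invoke the characterization in both directions, using crucially its ``if'' part applied to the product system. Your version is shorter and makes the structural point transparent (the ergodic components of $T\times Id$ under $\la$ are those of $T$ with a fixed point attached, with the same law), at the cost of leaning on the full strength of the external characterization; the paper's proofs buy either self-containedness (the spectral one) or a fiberwise joining scheme that recurs later (compare Theorem~\ref{closedonproducts}). The bookkeeping step you flag does go through, and is the only place needing care: each $\la_y$ decomposes over the space of ergodic $T$-invariant measures, and uniqueness of the ergodic decomposition of $\mu=\int_Y\la_y\,d\nu(y)$ forces the averaged decomposition measure to coincide with the law of $\bar x\mapsto\mu_{\bar x}$ under $P$; this gives both that a.e.\ ergodic component of a.e.\ $\la_y$ is one of the $\mu_{\bar x}$ and that $\int_Y P_y\,d\nu(y)=P$. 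One should phrase this on the space of ergodic measures rather than literally on $\overline X$, since the individual $P_y$ need not be absolutely continuous with respect to $P$, but only the marginal identity is used, so the argument is sound.
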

\begin{proof}
	Assume that $(Z,\mathcal{D},\rho,R)\in Erg$ and let $\eta\in J(T,Id,R)$.
	Note that $\eta|_{X\times Y}=\lambda$, $\eta|_{Y\times Z}=\nu\otimes\rho$ and (by assumption)
	\begin{equation}\label{eee1}
		\eta|_{X\times Z}=\mu\otimes \rho.\end{equation}
	Fix bounded, real functions $f\in L^2(X,\mu)$, $g\in L^2(Y,\nu)$ and $h\in L^2_0(Z,\rho)$. All we need to show is that
	\begin{equation}\label{eee0}\int f\otimes g\otimes h\,d\eta=0.\end{equation}
	Let $f=f_1+f_2$, where $f_1\circ T=f_1$ and $f_2\perp L^2(Inv(T))$. Then
	$$
	\int f\otimes g\otimes h\,d\eta=\int f_1\otimes g\otimes h\,d\eta+\int f_2\otimes g\otimes h\,d\eta.$$
	Now, the spectral measure (all the spectral measures are computed in $L^2(X\times Y\times Z,\eta)$) of the function $f_1\otimes g$ is equivalent to $\delta_{1}$, while the spectral measure of $h$ has no atom at 1 since $R$ is ergodic. Hence,  these spectral measures are mutually singular and therefore $f_1\otimes g$ and $h$ are orthogonal in $L^2(X\times Y\times Z,\eta)$, so the first term on the RHS disappears. In view of \eqref{eee1}, we have
	$$
	\sigma_{f_2\otimes h}=\sigma_{f_2}\ast \sigma_h.$$
	Suppose that this measure has an atom at~1. Then both spectral measures $\sigma_{f_2}$ and $\sigma_h$ must have an atom at $c\in \mathbb{S}^1$ and $\bar c\in \mathbb{S}^1$ respectively, where $c\neq1$ (as $R$ is ergodic). But then $T$ cannot be in $Erg^\perp$, a contradiction. Hence,  the spectral measures of $f_2\otimes h$ and $g$ are mutually singular hence these functions are orthogonal in $L^2(X\times Y\times Z,\eta)$ and \eqref{eee0} holds.
\end{proof}

{We now present an alternative proof showing that the result is a consequence of Theorem~\ref{classification}.}
\begin{proof}
	Let
	\[
	\mu=\int_{\overline{X}}\mu_{\bar x}\,dP(\bar x)
	\]
	be the ergodic decomposition of {$(T,\mu)$}, i.e. $P=\mu|_{Inv (T)}$ and $\overline{X}=X/Inv(T)$. Consider the disintegration of $\la$ with respect to $Inv (T)$:
	\begin{equation}\label{eq:ladeco}
	\la=\int_{\overline{X}}\la_{\bar x}\,dP(\bar x),
	\end{equation}
	where the equality of the integrating measures as well as the fact that
	\begin{equation}\label{eq:projident}
	\pi_*^X\la_{\bar x}=\mu_{\bar x}\quad \text{ for $P$-a.e. }\bar x\in \overline{X}
	\end{equation}
	follow from the uniqueness of disintegrations. Note also that the measures $\lambda_{\overline x}$ are $T\times Id$-invariant. Let $\nu_{\bar x}=\pi_*^Y\la_{\bar x}$. Since Proposition \ref{idperp} holds, by \eqref{eq:projident}, we get
	\begin{equation}\label{eq:ergdecoid}
	\la_{\bar x}=\mu_{\bar x}\otimes\nu_{\bar x}
	\end{equation}

	Let $(Z,\mathcal D,\rho, R)$ be an ergodic transformation.
	By Theorem \ref{classification} and Proposition \ref{ergdisj} we obtain that
	\begin{equation}\label{eq:aedisjoint}
	(R,\rho)\perp(T,\mu_{\bar x})\quad\text{ for $P$-a.e. } \bar x\in \overline{X}.
	\end{equation}
	Consider a joining
	\[
	\eta\in J\left((T\times Id,\la),(R,\rho)\right).
	\]
	Consider the disintegration of $\eta$ with respect to $\sigma$-algebra $Inv(T)\otimes \{\emptyset,Y\}\otimes \{\emptyset,Z\}$, that is
	\begin{equation}\label{eq:etadeco}
	\eta=\int_{\overline X}\eta_{\bar x}\,dP(\overline x),
	\end{equation}
	where again the appearance of  $P$ as integrating measure as well as the fact that
	\[
	\pi^{X\times Y}_*\eta_{\bar x}=\la_{\bar x}
	\]
	follows from the uniqueness of disintegration. Moreover, the measures $\eta_{\bar x}$ are $T \times Id\times R$-invariant and $\pi^{Z}_*\eta_{\bar x}=\rho$. {By applying Lemma~\ref{l:l4} to  \eqref{eq:ergdecoid} and \eqref{eq:aedisjoint}, we get that}
	\[
	\eta_{\bar x}=\la_{\bar{x}}\otimes\rho,
	\]
	thus, by \eqref{eq:etadeco} and \eqref{eq:ladeco}, we get
	that
	\[
	\eta=\la\otimes\rho.
	\]
	\end{proof}

\section{Identities are the only multipliers of $Erg^\perp$}\label{sec: soloid}
In this section we prove that there is no non-identity element of $Erg^\perp$ which is a multiplier of this class. First we show that the twist transformation on the torus is not a multiplier of $Erg^\perp$.


\begin{example}\label{example1}
Let $T:X\times \mathbb T\to X\times\mathbb T$ be given by $T(x,y)=(x,y+\beta(x))$, where $\beta:X\to \mathbb T$ is measurable. Assume that $\rho\in \mathscr{M}(X)$ is a measure on $X$ such that for every $z\in \mathbb T$ we have $\rho\{x\in X,\ \beta(x)=z\}=0$. In particular $\rho$ is continuous. Note that since for every $\alpha\in \mathbb T$ there is only counably many non-disjoint rotations, the 
assumptions of Theorem \ref{classification} are satisfied and, therefore $T\in 
Erg^\perp$.\footnote{Another, more elementary, proof follows by the fact that 
almost every ergodic component of $T$ is disjoint (even spectrally disjoint) 
with a fixed ergodic automorphism $R$ and, by the ergodicity of $R$, any 
joining between $R$ and $T$ is a convex combination of joinings between $R$ and 
ergodic components of $T$.} We will show 
that the system $(T,\rho\otimes Leb_{\mathbb{T}})$ is not a 
multiplier of  $Erg^{\perp}$.

	Let $\alpha \in \mathbb{R}\setminus \mathbb{Q}$ and consider the 
	automorphism
	\[
	R(x,z)=(x,z+\beta(x)+\alpha)
	\]
	 on $\mathbb{T}^2$, which preserves the measure $\rho\otimes Leb_{\mathbb 
	 T}$. One can check that $R$ also satisfies the assumptions 
	 of Theorem \ref{classification}, hence $R\in Erg^\perp$.
Now, consider the transformation $P$ on $X\times \mathbb{T}^2$ given by $P(x,y,z)=(x,y+\beta(x),z+\beta(x)+\alpha)$. It is easy to see that $P$ is an automorphism of  $(X\times \mathbb{T}^2,\rho\otimes Leb_{\mathbb{T}} \otimes Leb_{\mathbb{T}})$. Moreover, we can treat $\rho\otimes Leb_{\mathbb{T}}  \otimes Leb_{\mathbb{T}}$ as a measure on $X^2\times\mathbb{T}^2$  (up to a permutation of coordinates):
\[
\rho\otimes Leb_{\mathbb{T}} \otimes Leb_{\mathbb{T}} (A_1\times A_2\times A_3 \times A_4)=\rho(A_1\cap A_3) Leb_{\mathbb{T}}(A_2) Leb_{\mathbb{T}}(A_4),
\]
where $A_i\in \mathcal{B}(\mathbb{T})$ for $i=1,2,3,4$.
Notice that then we have
\[
\rho\otimes Leb_{\mathbb{T}}\otimes Leb_{\mathbb{T}}(\{(x,y,x,z): x\in X, \ y,z \in \mathbb{T}\})=1
\]
and the measure we consider is $T\times R$-invariant, so it is easy to see that it is a joining of automorphisms $T$ and $R$. Now, notice that $( T\times R,\rho\otimes Leb_{\mathbb{T}}\otimes Leb_{\mathbb{T}})$ has the rotation by $\alpha$ as a factor. Indeed, consider the map $\Pi:(X\times \mathbb{T}^2,\rho\otimes Leb_{\mathbb{T}} \otimes Leb_{\mathbb{T}})\to(\mathbb{T},Leb_{\mathbb{T}} )$ given by $\Pi(x,y,z)=z-y$. Then we have:
\[
\Pi(P(x,y,z))=\Pi(x,y+\beta(x),z+\beta(x)+\alpha)=z-y+\alpha
\]
and
\[
R_\alpha(\Pi(x,y,z))=R_\alpha(z-y)=z-y+\alpha,
\]
where $R_\alpha$ denotes the ergodic rotation by $\alpha$ on $\mathbb{T}$.
In particular, in view of \eqref{commonev}, the automorphism $(T\times R,\rho\otimes Leb_{\mathbb{T}}\otimes Leb_{\mathbb{T}})$ is not an element of $Erg^\perp$.
	
\end{example}

{
\begin{remark} Note that the automorphisms considered in Example~\ref{example1} 
are also interesting from the spectral theory of dynamical systems viewpoint. 
Indeed, it is not hard to see that the maximal spectral type of $T$ in Example 
\ref{example1} on $L^2_0(\mathbb T^2, \rho\otimes Leb_{\mathbb T})$ is of the 
form \begin{equation}\label{realiz}
\sum_{
	k\in\mathbb Z\setminus\{0\}}\frac1{2^{|k|}}(e^{2\pi ik\cdot})_\ast(\rho)\end{equation}
In particular, for each measure $\rho$ on $\mathbb T$, the measure \eqref{realiz} has a realization as the maximal spectral type of a Koopman operator.
\end{remark}}

We say that the map $R\in \operatorname{Aut}(X,\mathcal B,\mu)$ is \emph{compact} if the subgroup generated by $R$ is compact. 
The next lemma is a well-known result, see e.g. \cite{Ju-Ru}.  It allows to 
represent an arbitrary ergodic transformation with a compact element in its centralizer in the form of an  algebraic 
extension.
\begin{lemma}\label{lem:skewrep}
	Assume that $(X,\mathcal B,\mu, T)$ is ergodic and $R\in C(T)$ be compact. 
	Moreover, assume that $G:=\overline{\{R^n:\ n\in\z\}}$, equipped with the 
	Haar measure $\la_G$, acts freely on $X$. Then $T$ is isomorphic to the 
	extension $\bar T_{\varphi}$ over $\bar T$ which is the action of $T$ on 
	$X/ Inv(R)$ and $\varphi:X/ Inv(R)\to G$, where $\bar T_{\varphi}$ 
	preserves $\mu|_{Inv(R)}\otimes \lambda_G$. Moreover, the group $G$ acts 
	trivially on the first coordinate of the above extension and it is the 
	multiplication on the second coordinate. 
\end{lemma}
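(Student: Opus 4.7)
The plan is to produce a measurable trivialisation $X\cong (X/\operatorname{Inv}(R))\times G$ equivariant for the $G$-action and then read off the form of $T$ from the fact that $T$ commutes with $G$.

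First I would check that the $\sigma$-algebra $\operatorname{Inv}(R)$ coincides modulo $\mu$ with the $\sigma$-algebra of $G$-invariant Borel sets. One inclusion is obvious ($G$-invariant sets are $R$-invariant). For the converse, if $A$ is $R$-invariant then $\{g\in G:\mu(gA\triangle A)=0\}$ is a closed subgroup of $G$ containing $R$, hence all of $G$; by a Fubini argument this forces $A$ to agree with a $G$-invariant set up to a $\mu$-null set. Set $\overline X:=X/\operatorname{Inv}(R)$ and $P:=\mu|_{\operatorname{Inv}(R)}$, and consider the disintegration $\mu=\int_{\overline X}\mu_{\bar x}\,dP(\bar x)$. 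Since $G$ acts freely and preserves $\mu$, the conditional measure $\mu_{\bar x}$ is supported on a single $G$-orbit and is $G$-invariant on that orbit; by uniqueness of Haar measure, $\mu_{\bar x}$ is pushed forward from $\lambda_G$ under the orbit map.

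Next I would produce a measurable section $s:\overline X\to X$ of the quotient map $X\to\overline X$. Because $G$ is compact and acts freely and continuously (after passing to a topological model as mentioned in Subsection~\ref{subs:notation}), the quotient is a standard Borel space and one can apply a measurable selection theorem (e.g.\ the Kuratowski--Ryll-Nardzewski theorem, or directly the existence of a Borel transversal for a free compact group action). Having $s$, define
\[
\Phi:\overline X\times G\longrightarrow X,\qquad \Phi(\bar x,g):=g\cdot s(\bar x).
\]
Freeness makes $\Phi$ a Borel bijection onto a conull set, and the discussion of the disintegration above shows that $\Phi_*(P\otimes\lambda_G)=\mu$. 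By construction, the $G$-action in the new coordinates is $h\cdot(\bar x,g)=(\bar x,hg)$, which is the trivial action on $\overline X$ and multiplication on $G$ as required.

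It remains to transport $T$ through $\Phi$. The commutation $RT=TR$ extends by closure to $gT=Tg$ for every $g\in G$, so $T$ descends to a map $\overline T$ on $\overline X$ preserving $P$, and in the $\Phi$-coordinates
\[
T\bigl(g\cdot s(\bar x)\bigr)=g\cdot T(s(\bar x))=g\cdot\varphi(\bar x)\cdot s(\overline T\bar x),
\]
where $\varphi:\overline X\to G$ is the unique measurable map satisfying $T(s(\bar x))=\varphi(\bar x)\cdot s(\overline T\bar x)$; measurability of $\varphi$ follows from freeness together with measurability of $T$ and $s$. This is precisely $\overline T_\varphi(\bar x,g)=(\overline T\bar x,\varphi(\bar x)g)$, completing the identification.

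The main obstacle is the measurable cross-section step: one must ensure that the quotient $\overline X$ genuinely admits a Borel section, which relies on reducing to a compact-metric topological model and invoking a selection theorem for free compact group actions; once this is in place, the rest of the argument is a formal bookkeeping using commutativity of $T$ with $G$ and uniqueness of Haar measure on the fibres.
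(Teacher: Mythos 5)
The paper does not actually prove Lemma~\ref{lem:skewrep}: it is stated as a well-known result with a pointer to \cite{Ju-Ru}, so there is no in-paper argument to compare against. Your proposal is, in substance, the standard proof behind that citation, and it is correct: identify $\operatorname{Inv}(R)$ mod $\mu$ with the $\sigma$-algebra of $G$-invariant sets (your closed-subgroup plus Fubini argument works), observe that the conditional measures are Haar measures on orbits by freeness, build a Borel cross-section $s$, trivialise via $\Phi(\bar x,g)=g\cdot s(\bar x)$, and read off the cocycle from $T(s(\bar x))=\varphi(\bar x)\cdot s(\overline T\bar x)$. Two points deserve to be made explicit rather than implicit. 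First, $G$ is defined as a closure inside $\operatorname{Aut}(X,\mathcal B,\mu)$, so its elements are a priori only a.e.-defined and ``acts freely on $X$'' presupposes a point realization in which the whole compact group (not just $T$) acts jointly measurably (indeed one can take a compact model on which $G$ acts by homeomorphisms); this is exactly what your Fubini step, the orbit description of $\mu_{\bar x}$, the Borel section, and the measurability of the division map $(x,gx)\mapsto g$ all rely on, so it should be invoked once at the start rather than only alluded to at the section step. Second, when you compute $T(g\cdot s(\bar x))=g\cdot\varphi(\bar x)\cdot s(\overline T\bar x)$ and match it with $\overline T_\varphi(\bar x,g)=(\overline T\bar x,\varphi(\bar x)g)$, you are tacitly using that $G$ is abelian (it is the closed group generated by one element) and that $gT=Tg$ for all $g\in G$, which follows since $C(T)$ is a closed subgroup of $\operatorname{Aut}(X,\mathcal B,\mu)$, as the paper notes. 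With these two remarks added, the argument is complete; note also that ergodicity of $T$ is not needed for this representation, so your proof in fact gives the statement in slightly greater generality.
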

We have the following result, which says that under relevant assumptions, the property of two cocycles being disjoint is invariant under shifting the cocycle extensions by the same group element.
\begin{lemma}\label{lem:disher}
If $(T_\varphi,X\times G,\mu\otimes\la_G)$ and $(T_{\varphi'}',Y\times G,\nu\otimes\la_G)$ are weakly mixing and disjoint, then for every $g_0\in G$ the automorphisms $T_{\varphi(\cdot)g_0}$ and $T_{\varphi'(\cdot)g_0}'$ are also weakly mixing and disjoint.
\end{lemma}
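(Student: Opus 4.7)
The plan is to prove the two conclusions, weak mixing and disjointness, separately.

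\emph{Weak mixing.} Writing $T_{\varphi(\cdot)g_0}=T_\varphi\circ R_{g_0}$ with $R_{g_0}(x,g):=(x,gg_0)$ in the centralizer of $T_\varphi$, I would apply Theorem~\ref{thm56}. For any character $\pi$ of $G$ and any measurable $F\colon X\to\mathbb C$ with $|F|=1$, the equation $\pi(\varphi(x)g_0)F(x)=cF(Tx)$ rewrites as $\pi(\varphi(x))F(x)=c\,\pi(g_0)^{-1}F(Tx)$. Thus, on the $\pi$-isotypic subspace, the eigenvalues of $T_{\varphi(\cdot)g_0}$ are exactly $\pi(g_0)$ times those of $T_\varphi$ on the same subspace. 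Since $T_\varphi$ is weakly mixing, the only eigenpair is the trivial one at $\pi=1$, so the same holds for $T_{\varphi(\cdot)g_0}$; hence it is ergodic and weakly mixing. The same argument applies to $T'_{\varphi'(\cdot)g_0}$.

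\emph{Disjointness (averaging).} Let $\lambda\in J(T_{\varphi(\cdot)g_0},T'_{\varphi'(\cdot)g_0})$ and set
\[
\tilde\lambda:=\int_G (R_k\times R_k)_{\ast}\lambda\,d\lambda_G(k),
\]
where $R_k\times R_k$ is the diagonal Haar translation on the two $G$-fibers. Since $R_k$ lies in the centralizer of each of $T_\varphi,T'_{\varphi'},T_{\varphi(\cdot)g_0},T'_{\varphi'(\cdot)g_0}$, the measure $\tilde\lambda$ is simultaneously $(R_l\times R_l)$-invariant for every $l\in G$ and $T_{\varphi(\cdot)g_0}\times T'_{\varphi'(\cdot)g_0}$-invariant. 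Using the abelian identity $T_\varphi\times T'_{\varphi'}=(R_{g_0^{-1}}\times R_{g_0^{-1}})\circ(T_{\varphi(\cdot)g_0}\times T'_{\varphi'(\cdot)g_0})$ together with the diagonal $R$-invariance of $\tilde\lambda$, one gets $T_\varphi\times T'_{\varphi'}$-invariance. Haar-invariance of the marginals of $\lambda$ shows $\tilde\lambda$ has marginals $\mu\otimes\lambda_G$ and $\nu\otimes\lambda_G$, so $\tilde\lambda\in J(T_\varphi,T'_{\varphi'})$. The disjointness hypothesis then forces $\tilde\lambda=\mu\otimes\lambda_G\otimes\nu\otimes\lambda_G$.

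\emph{Disjointness (recovery).} By the first part both $T_{\varphi(\cdot)g_0}$ and $T'_{\varphi'(\cdot)g_0}$ are weakly mixing, so their product on $\mu\otimes\lambda_G\otimes\nu\otimes\lambda_G=\tilde\lambda$ is weakly mixing, whence $\tilde\lambda$ is ergodic under $T_{\varphi(\cdot)g_0}\times T'_{\varphi'(\cdot)g_0}$. The identity $\tilde\lambda=\int_G(R_k\times R_k)_{\ast}\lambda\,d\lambda_G(k)$ then decomposes an ergodic measure into invariant ones; after further ergodically decomposing each $(R_k\times R_k)_{\ast}\lambda$ and invoking uniqueness of the ergodic decomposition for $\tilde\lambda$, one obtains $(R_k\times R_k)_{\ast}\lambda=\tilde\lambda$ for $\lambda_G$-almost every $k\in G$. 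The map $k\mapsto(R_k\times R_k)_{\ast}\lambda$ is weak$^{\ast}$-continuous (by dominated convergence against continuous test functions, using uniform continuity of functions on $X\times G\times Y\times G$), and $\lambda_G$ has full support, so the equality extends to every $k\in G$; evaluating at $k=e$ gives $\lambda=\tilde\lambda$, the product measure.

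The main obstacle is the final recovery step. Averaging over the diagonal $G$-action and using the hypothesized disjointness easily pins down the \emph{average} $\tilde\lambda$, but extracting the equality $\lambda=\tilde\lambda$ at the specific element $k=e$ requires both the weak mixing of the factors (to guarantee that $\tilde\lambda$ is ergodic, so that uniqueness of the ergodic decomposition applies) and the topological continuity of the $G$-action on measures (to upgrade an a.e.\ equality in $k$ to equality at a single point). Without either ingredient one is left only with an averaged form of disjointness rather than the pointwise statement.
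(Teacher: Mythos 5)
Your proof is correct, and while the weak-mixing half coincides with the paper's (both rewrite the eigenfunction equation via Theorem~\ref{thm56} and contradict weak mixing of $T_\varphi$), your disjointness half takes a genuinely different route. The paper first passes to the bases: disjointness of $T_\varphi$ and $T'_{\varphi'}$ forces $T\perp T'$, so any joining $\rho$ of the shifted extensions projects onto $\mu\otimes\nu$; then, since the product of the two weakly mixing shifted extensions is weakly mixing, that product is an \emph{ergodic} $G\times G$-extension of $(T\times T',\mu\otimes\nu)$, and relative unique ergodicity (Theorem~\ref{thm57}) pins $\rho$ down as the product measure. You instead average the joining over the diagonal Haar translations to manufacture an element of $J(T_\varphi,T'_{\varphi'})$, use the full disjointness hypothesis (rather than only its consequence for the bases) to identify the average $\tilde\lambda$ as the product measure, and recover $\lambda$ from $\tilde\lambda$ via extremality of the ergodic measure $\tilde\lambda$ in the simplex of $T_{\varphi(\cdot)g_0}\times T'_{\varphi'(\cdot)g_0}$-invariant measures; weak mixing of the shifted extensions is used at the same point as in the paper, to make the product measure ergodic. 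What each buys: the paper's argument is shorter once Theorem~\ref{thm57} is available, while yours bypasses Theorem~\ref{thm57} entirely and relies only on the soft extreme-point property of ergodic measures. One simplification for your last step: the continuity-in-$k$ argument is superfluous, because $\tilde\lambda$ is the product measure and hence itself invariant under every diagonal translation $R_k\times R_k$; so from $(R_k\times R_k)_*\lambda=\tilde\lambda$ for a single (almost every) $k$ you get directly $\lambda=(R_{k^{-1}}\times R_{k^{-1}})_*\tilde\lambda=\tilde\lambda$, with no appeal to full support of Haar measure or to a topological model.
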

\begin{proof}
	By Theorem \ref{thm56}, if $T_{\varphi(\cdot)g_0}$ is not weakly mixing, then there exists a measurable solution $F$ to the equation
	\[
	\pi(\varphi(x))\pi(g_0)F(x)=cF(Tx),
	\]
	which contradicts weak mixing of $T_{\varphi}$. Since $T_{\varphi}\perp T_{\varphi'}'$, then $T\perp T'$. Hence, if $\rho\in J(T_{\varphi(\cdot)g_0},T_{\varphi'(\cdot)g_0}') $ then $\rho|_{X\times X'}=\mu\otimes\mu'$. Now since $T_{\varphi(\cdot)g_0},T_{\varphi'(\cdot)g_0}'$ are weakly mixing, so is their product and the result follows from Theorem \ref{thm57}.

	\end{proof}

\begin{lemma}\label{RWM}
	Assume that $(X,\mathcal B,\mu, T)$ is weakly mixing, $R\in C(T)$ is compact and $G:=\overline{\{R^n;\ n\in\z\}}$ acts freely on $X$. Then $T\circ R$ is weakly mixing.
	\end{lemma}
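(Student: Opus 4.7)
The plan is to reduce to the skew-product representation of $T$ supplied by Lemma~\ref{lem:skewrep}, and then apply the eigenvalue criterion of Theorem~\ref{thm56}. Since $T$ is weakly mixing (in particular ergodic) and $G$ acts freely, Lemma~\ref{lem:skewrep} identifies $T$ with $\bar T_\varphi$ on $(\bar X\times G,\ \mu|_{Inv(R)}\otimes\lambda_G)$, where $\bar X=X/Inv(R)$ and $\varphi\colon\bar X\to G$ is measurable. Under this identification, the action of $R$ becomes $(x,g)\mapsto (x,g g_0)$ for some $g_0\in G$ (the image of $R$ under the embedding into $G=\overline{\{R^n\}}$). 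Consequently $T\circ R$ is conjugate to the skew product $\bar T_{\varphi\cdot g_0}$ with shifted cocycle $\varphi(\cdot)\,g_0$.

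Next, I argue by contradiction. Assume $T\circ R$ is not weakly mixing. Then there exist $c\in\mathbb S^1$ and a measurable, non-constant $H\in L^2(\bar X\times G)$ with $H\circ \bar T_{\varphi g_0}=cH$. Expanding $H$ in the $G$-variable, $H(x,g)=\sum_\pi H_\pi(x)\pi(g)$ with $\pi$ ranging over the characters of $G$, and matching Fourier coefficients of the eigenvalue equation yields, for every $\pi$,
\[
\pi(\varphi(x))\,H_\pi(\bar T x)\;=\;c\,\pi(g_0)^{-1}\,H_\pi(x).
\]
Taking absolute values, $|H_\pi|$ is $\bar T$-invariant; since $\bar T$ is ergodic (as a factor of $T$), each $|H_\pi|$ is a.e.\ constant.

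Finally, I complete the contradiction. As $T$ is weakly mixing, so is the factor $\bar T$, so any nonzero $H_1$ must be constant with $c=1$. Hence non-constancy of $H$ forces the existence of a character $\pi_0\neq 1$ with $H_{\pi_0}\not\equiv 0$; after renormalising, $|H_{\pi_0}|\equiv 1$. Setting $\tilde H(x,g):=H_{\pi_0}(x)\,\pi_0(g)$, a direct computation using the displayed equation above gives
\[
\tilde H\bigl(\bar T_\varphi(x,g)\bigr)\;=\;H_{\pi_0}(\bar T x)\,\pi_0(\varphi(x))\,\pi_0(g)\;=\;c\,\pi_0(g_0)^{-1}\,\tilde H(x,g).
\]
Thus $\tilde H$ is a measurable function of modulus $1$ that is non-constant in $g$ (since $\pi_0\neq 1$) and an eigenfunction of $T=\bar T_\varphi$ with eigenvalue $c\,\pi_0(g_0)^{-1}\in\mathbb S^1$, contradicting the weak mixing of $T$ via Theorem~\ref{thm56}. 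The only step requiring care is the Fourier decomposition in the second paragraph, but it parallels exactly the first lines of the proof of Lemma~\ref{lem:disher} and poses no serious obstacle.
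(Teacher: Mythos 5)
Your argument is correct and follows essentially the same route as the paper: apply Lemma~\ref{lem:skewrep} to write $T$ as $\bar T_\varphi$ and observe that $T\circ R$ becomes the shifted-cocycle extension $\bar T_{\varphi(\cdot)g_0}$, then transfer weak mixing from $T$ to the shifted extension. The only difference is that the paper concludes by citing Lemma~\ref{lem:disher} (whose weak-mixing part rests on Theorem~\ref{thm56}), whereas you re-derive that step directly by the Fourier-mode analysis on $G$ — a correct, self-contained unwinding of the same eigenfunction criterion.
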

\begin{proof}
	By Lemma \ref{lem:skewrep}, $T$ is isomorphic to the compact group 
	extension over the factor $Inv(R)$ with some cocycle $\varphi:X/Inv(R)\to 
	G$. If we identify $x$ with the pair $(\bar x,S_x)$, then 
	\[
	T(\bar x,S_x)=(\bar{T}\bar x, \varphi(\bar x)\circ S_x).
	\]
	We have 
	\[
	T\circ R(\bar x,S_x)=T(\bar x,S_x\circ R)=(\bar{T}\bar x, \varphi(\bar x)\circ S_x\circ R)=
	(\bar{T}\bar x, (\varphi(\bar x)\circ R)\circ S_x)
	\]
	Then  the result  follows from Lemma \ref{lem:disher}.
	\end{proof}

\begin{lemma}\label{lem:ergfac}
	Assume that $(X,\mathcal B,\mu, T)$ is weakly mixing and $R\in C(T)$ is compact, $R\neq Id$. Then there exists $\la\in J(T,T\circ R)$ which has the translation by $R$ on $G:=\overline{\{R^n;\,n\in\z \}}$ as a factor. In particular, it has a non-trivial ergodic factor.
\end{lemma}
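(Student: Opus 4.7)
The plan is to build the joining explicitly from the skew-product representation of $T$ afforded by Lemma~\ref{lem:skewrep}, exploiting that $G=\overline{\{R^n:n\in\z\}}$ is abelian (it is the closure of a cyclic subgroup). First I would identify $T$ with the group extension $\bar T_\varphi$ over the base system $(\bar X,\bar\mu,\bar T)$, where $\bar X=X/Inv(R)$ and $\bar\mu=\mu|_{Inv(R)}$, so that $T(\bar x,g)=(\bar T\bar x,\varphi(\bar x)g)$, while the compact element $R\in C(T)$ acts by $R(\bar x,g)=(\bar x,gR)$. Consequently $(T\circ R)(\bar x,g)=(\bar T\bar x,\varphi(\bar x)gR)$, so $T$ and $T\circ R$ are both group extensions of the common base $(\bar T,\bar\mu)$.

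Next, I would define $\lambda$ to be the relatively independent joining of $T$ and $T\circ R$ over this common factor,
\[
\lambda:=\int_{\bar X}\bigl(\delta_{\bar x}\otimes\lambda_G\bigr)\otimes\bigl(\delta_{\bar x}\otimes\lambda_G\bigr)\,d\bar\mu(\bar x),
\]
that is, the measure concentrated on $\{\bar x_1=\bar x_2\}$ whose conditional on each fiber is the product Haar measure. Both marginals equal $\bar\mu\otimes\lambda_G=\mu$, and invariance under $T\times(T\circ R)$ is immediate because $\bar T$ preserves $\bar\mu$ and the fiber action $(g_1,g_2)\mapsto(\varphi(\bar x)g_1,\varphi(\bar x)g_2R)$ preserves $\lambda_G\otimes\lambda_G$.

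Then I would exhibit the factor map $\Psi:X\times X\to G$ defined by $\Psi((\bar x_1,g_1),(\bar x_2,g_2)):=g_2g_1^{-1}$. On the support of $\lambda$ one has $\bar x_1=\bar x_2=\bar x$, and using abelianness of $G$,
\[
\Psi\bigl(Tx_1,(T\circ R)x_2\bigr)=\bigl(\varphi(\bar x)g_2R\bigr)\bigl(\varphi(\bar x)g_1\bigr)^{-1}=g_2g_1^{-1}R,
\]
so $\Psi$ intertwines $T\times(T\circ R)$ with translation by $R$ on $G$; the pushforward $\Psi_*\lambda$ equals $\lambda_G$ because $g_1,g_2$ are independent and Haar-distributed. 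Finally, translation by $R$ on $(G,\lambda_G)$ is ergodic since $R$ generates $G$ topologically by definition, and it is non-trivial because $R\neq Id$ forces $G\neq\{e\}$. The crux of the argument is to realise that taking the joining to be diagonal on the $\bar X$-factor makes the cocycle twists $\varphi(\bar x)$ cancel on both sides; this is exactly the place where the abelianness of $G$ is used, and is the only step that is not a routine verification.
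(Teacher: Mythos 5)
Your proof is correct and is essentially the paper's own argument: both pass to the skew-product representation of Lemma~\ref{lem:skewrep}, take the joining that is diagonal over the common base $\bar T$ with independent Haar measure on the two $G$-fibers, and use the map $(g_1,g_2)\mapsto g_2g_1^{-1}$ to exhibit the translation by $R$ on $G$ (ergodic since $R$ topologically generates $G$) as a factor.
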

\begin{proof}
	Recall that both $T$ and $T\circ R$ can be seen as extensions over $\bar T$ with cocycles $\varphi$ and $\varphi (\cdot)R$, respectively.
	It is enough then to consider the measure $\la=\left((Id\times Id)_*\mu\right)\otimes\la_G\otimes \la_G$, which corresponds to the automorphism $(\bar x,g,h)\mapsto (\bar Tx, \varphi(x)\circ g, \varphi(x)\circ R\circ h)$. Then the map $(\bar x,g,h)\mapsto h\circ g^{-1}$ yields the translation by $R$ on $G$ as a factor. Since it is a rotation by a generator on a compact group, it is ergodic. Indeed, choose a non-trivial character $\chi\in \widehat G$ s.t. $\chi(R)\neq 1$. Then use $\chi$ to define a rotation by $\chi(R)$ as a factor on $\chi(G)$.
	\end{proof}
As one of the crucial steps to prove the main result of this section, we show first its important special case .
%
%
\begin{lemma}\label{prop: wmcase}
	Let $T\in Erg^{\perp}$ be such that almost every ergodic component is weakly mixing and not a one-point system. Then $T$ is not a multiplier of $Erg^{\perp}$.
\end{lemma}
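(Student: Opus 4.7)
My plan is to show that a particular $2$-self-joining of $T$ fails to be a multiplier of $Erg^{\perp}$, which by Lemma \ref{mult} will force $T\notin\mathcal M(Erg^{\perp})$. The natural candidate is the relatively independent $2$-self-joining of $T$ over $Inv(T)$,
\[
\lambda_{\mathrm{rel}}:=\int_{\overline X}\mu_{\bar x}\otimes\mu_{\bar x}\,dP(\bar x)\in J_2(T),
\]
whose ergodic components are the weakly mixing systems $(Z^2,\mu_{\bar x}^{\otimes 2},T_{\bar x}\times T_{\bar x})$. First I would check that $(X\times X,\lambda_{\mathrm{rel}},T\times T)\in Erg^{\perp}$: by Theorem \ref{classification} this amounts to $T_{\bar x}\times T_{\bar x}\perp T_{\bar y}\times T_{\bar y}$ for $P\otimes P$-a.e.\ $(\bar x,\bar y)$, which would follow from $T_{\bar x}\perp T_{\bar y}$ together with the preservation of disjointness of weakly mixing systems under Cartesian products. (If this actually failed, Lemma \ref{mult} would already yield the conclusion, so nothing is lost.)

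The key structural input is that each fiber $T_{\bar x}\times T_{\bar x}$ carries the coordinate-swap $\sigma(z,z')=(z',z)$ as a non-trivial compact element of its centralizer; the group $\{Id,\sigma\}\cong\mathbb Z/2$ acts freely off the diagonal, which is $\mu_{\bar x}^{\otimes 2}$-null since $T_{\bar x}$ is non-trivial ergodic and hence $\mu_{\bar x}$ continuous. Applying Lemma \ref{lem:ergfac} fiberwise thus produces, for every $\bar x$, a joining $\lambda'_{\bar x}\in J(T_{\bar x}\times T_{\bar x},(T_{\bar x}\times T_{\bar x})\circ\sigma)$ whose factor onto $\mathbb Z/2$ is the ergodic swap. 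The explicit formula from the proof of Lemma \ref{lem:ergfac} makes the assignment $\bar x\mapsto \lambda'_{\bar x}$ measurable, so integrating against $P$ yields
\[
\Lambda:=\int_{\overline X}\lambda'_{\bar x}\,dP(\bar x)\in J\bigl((T\times T,\lambda_{\mathrm{rel}}),((T\times T)\circ\sigma,\lambda_{\mathrm{rel}})\bigr),
\]
and the fiberwise $\mathbb Z/2$-factor maps assemble into a single measurable factor map to $\mathbb Z/2$ on which the combined action is the non-trivial (hence ergodic) swap.

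Next I would verify that the companion $S:=((T\times T)\circ\sigma,\lambda_{\mathrm{rel}})$ itself lies in $Erg^{\perp}$. Its ergodic components are $(T_{\bar x}\times T_{\bar x})\circ\sigma$, which are weakly mixing by Lemma \ref{RWM}, and their pairwise disjointness for $\bar x\neq \bar y$ reduces to that of the untwisted products via Lemma \ref{lem:disher}, applied with the common group element $g_0=\sigma$ of $\mathbb Z/2$. With $S\in Erg^{\perp}$ and $\Lambda$ exhibiting a non-trivial ergodic factor in the joined system, $(T\times T,\lambda_{\mathrm{rel}})$ cannot be a multiplier of $Erg^{\perp}$. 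Lemma \ref{mult} then forces $T\notin\mathcal M(Erg^{\perp})$, completing the argument.

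The step I expect to be the main obstacle is the recurrent claim that for weakly mixing $T_{\bar x},T_{\bar y}$ with $T_{\bar x}\perp T_{\bar y}$, the Cartesian squares $T_{\bar x}\times T_{\bar x}$ and $T_{\bar y}\times T_{\bar y}$ remain disjoint (which then propagates to the $\sigma$-twisted versions through Lemma \ref{lem:disher}). This is a promotion of pairwise-independence to full independence for a four-fold joining of weakly mixing systems, and I would handle it through a PID-style argument resting on the continuous spectrum of each $T_{\bar x}$, or equivalently by a spectral-singularity computation using that the convolution of mutually singular continuous measures stays singular from the trivial character. A secondary but purely technical point is the measurable dependence of $\lambda'_{\bar x}$ on $\bar x$ needed to integrate, which follows from the explicit construction in the proof of Lemma \ref{lem:ergfac} together with the measurability of the ergodic decomposition.
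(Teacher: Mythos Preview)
Your overall strategy coincides with the paper's: pass to the relatively independent self-joining $\lambda_{\mathrm{rel}}=\int\mu_{\bar x}\otimes\mu_{\bar x}\,dP$, use the coordinate swap $\sigma$ (a compact, free $\mathbb Z/2$-action on a.e.\ fiber) to build a joining of $(T\times T,\lambda_{\mathrm{rel}})$ with the companion $((T\times T)\circ\sigma,\lambda_{\mathrm{rel}})$ carrying the ergodic rotation on $\mathbb Z/2$ as a factor, and conclude via Lemma~\ref{mult}. The only execution difference is that the paper sets up a \emph{global} skew-product representation (the quotient map and cocycle $\varphi=\psi\circ(T\times T)-\psi$ are the same for every fiber), which makes the measurability of your fiberwise integration a non-issue.

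The part you flag as the ``main obstacle'' is, however, handled incorrectly in your proposal and this is precisely where the paper's argument is cleaner. The implication $T_{\bar x}\perp T_{\bar y}\Rightarrow T_{\bar x}\times T_{\bar x}\perp T_{\bar y}\times T_{\bar y}$ is \emph{false} for general weakly mixing systems: it would force a PID-type conclusion for a four-fold joining with all pairwise projections product, and weakly mixing systems need not have PID. Your spectral sketch does not salvage this either, since disjointness does not imply spectral disjointness, and convolutions of mutually singular continuous measures can be absolutely continuous. The good news is that you already have the fix in hand without noticing it: your parenthetical dichotomy (``if this actually failed, Lemma~\ref{mult} would already yield the conclusion'') is exactly how the paper proceeds. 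Structure the proof by contradiction, assuming $T\in\mathcal M(Erg^\perp)$; then Lemma~\ref{mult} gives $(T\times T,\lambda_{\mathrm{rel}})\in Erg^\perp$ outright, and applying Theorem~\ref{classification} in the \emph{converse} direction yields $T_{\bar x}\times T_{\bar x}\perp T_{\bar y}\times T_{\bar y}$ for $P\otimes P$-a.e.\ $(\bar x,\bar y)$ as a consequence rather than a hypothesis. With that in hand, Lemma~\ref{lem:disher} transfers the disjointness to the $\sigma$-twisted fibers and Theorem~\ref{classification} gives $((T\times T)\circ\sigma,\lambda_{\mathrm{rel}})\in Erg^\perp$, after which your joining $\Lambda$ delivers the contradiction exactly as you describe.
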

\begin{proof}
	Assume that $(X,\mathcal B,\mu, T)\in Erg^{\perp}$ is a multiplier of $Erg^{\perp}$ and let
	\[
	\mu=\int_{\overline{X}}\mu_{\bar x}\,dP(\bar x)
	\]
	be the ergodic decomposition of $(T,\mu)$. Let  $\tilde{\mu}\in J_2(T,\mu)$ be given by
	\begin{equation}\label{ergdecomp}
	\tilde\mu=\int_{\overline{X}}\mu_{\bar x}\otimes\mu_{\bar x}\,dP(\bar x).
	\end{equation}
	Note that it is the ergodic decomposition of $(T\times T, \tilde\mu)$ since almost every fiber automorphism $(T\times T,\mu_{\bar x}\otimes\mu_{\bar x})$, by assumption, is weakly mixing. Note also that $(T\times T,\tilde\mu)\in Erg^{\perp}$, in fact, it is even a multiplier of $Erg^{\perp}$ as a consequence of Lemma \ref{mult}.
	
	On $X\times X$ we consider the homeomorphism $R$ given by the formula $R(x,y)=(y,x)$. Note that $R$ preserves measures $\mu_{\bar x}\otimes\mu_{\bar x}$ for every $\bar x\in \overline{X}$, thus, it preserves $\tilde\mu$. Furthermore,
	\[
	R\in C(T\times T,\mu_{\bar x}\otimes\mu_{\bar x}) \text{ for every $\bar x\in \overline{X}$. }
	\]
	Note that $G:=\overline{\{R^n:\ n\in\mathbb{Z}\}}=\{Id, R\}$ is isomorphic to $\mathbb Z_2=\mathbb Z/2\mathbb Z$ and  $R$ acts on it by addition $i\mapsto i+1\ \operatorname{mod} 2$. In particular, the action of $R$ on $G$ is ergodic.
	
	Consider the equivalence relation $\sim$ on  $X\times X$, where the equivalence class of $(x,y)$ is $\{(x,y),(y,x)\}$. Let $\pi:X\times X\to X\times X/\sim$ be the natural quotient map and let $\psi:X\times X\to \mathbb Z_2$ be a measurable function such that $\psi(x,y)=\psi(y,x)+1$, whenever $x\neq y$. While the existence of such a measurable function follows from an appropriate selector theorem, we give a direct argument. Since $X\times X$ is a standard Borel space, there exists a measurable 1-1 function $\gamma:X\times X\setminus\{(x,x):\, x\in X\}\to \re$. Then, it is enough to put $\psi(x,y)=0$ whenever $\gamma(x,y)<\gamma(y,x)$ and $\psi(x,y)=1$ otherwise.
	
	Since $(T,\mu_{'\bar x})$ is not a one-point system for every $\bar x\in\overline X$, the group $G$ acts freely on $(X\times X, \mu_{\bar x}\otimes\mu_{\bar x})$. Hence, we can apply
	Lemma \ref{lem:skewrep} and get that  the 
	automorphism $(T\times T,\mu_{\bar x}\otimes\mu_{\bar x})$ is isomorphic, 
	via the map $(\pi,\psi)$ to the automorphism
	\begin{equation} \label{quot}\text{$\left((\overline{T\times T})_{\varphi},\pi_*(\mu_{\bar x}\otimes\mu_{\bar x})\otimes {\tfrac{\de_0+\de_1}{2}}\right)$ on $(X\times X/\sim)\times \mathbb Z_2$},
	\end{equation} where $\varphi$ is defined as  in Lemma \ref{lem:skewrep} 
	(in our case $\varphi=\psi\circ(T\times T)-\psi$). Note that $\pi$, $\psi$ 
	and $\varphi$ do not depend on $\bar x\in X\times X/\sim$. By the choice of 
	$\psi$ and the continuity of $\pi$ on $X\times X$ as well as on $\mathscr 
	M(X\times X)$, the assignment $\bar x\mapsto \pi_*(\mu_{\bar 
	x}\otimes\mu_{\bar x})\otimes{\tfrac{\de_0+\de_1}{2}}$ is measurable. 
	Hence,  we can consider simultaneously each fiber automorphism as the same 
	extension with varying invariant measure or, in other words, $(T\times 
	T,\tilde\mu)$ is isomorphic to the automorphism $(\overline{T\times 
	T})_{\varphi}$ considered with measure
	\[
	\int_{\overline{X}}\pi_*(\mu_{\bar x}\otimes\mu_{\bar x})\otimes {\tfrac{\de_0+\de_1}{2}}\,dP(\bar x).
	\]
	
Since each of the fiber measures $\mu_{\bar x}\otimes\mu_{\bar x} $ is 
$R$-invariant, in view of Lemma \ref{RWM}, the automorphism $\left((T\times 
T)\circ R,\mu_{\bar x}\otimes\mu_{\bar x} \right)$ is weakly mixing. Thus 
\eqref{ergdecomp} is also an ergodic decomposition of  $((T\times T)\circ 
R,\tilde\mu)$. Moreover, by  Lemma \ref{lem:disher} and Theorem 
\ref{classification} the automorphism $\left((T\times T)\circ 
R,\tilde\mu\right)$ is also an element of $Erg^{\perp}$. Finally, following 
\eqref{quot} and Lemma \ref{lem:skewrep}, we have that the automorphism 
$((T\times T)\circ R,\mu_{\bar x}\otimes\mu_{\bar x})$ is isomorphic to the 
automorphism
\begin{equation} \text{$\left((\overline{T\times T})_{\varphi(\cdot) R},\pi_*(\mu_{\bar x}\otimes\mu_{\bar x})\otimes {\tfrac{\de_0+\de_1}{2}}\right)$ on $(X\times X/\sim)\times \mathbb Z_2$}
\end{equation} via the map $\pi\times\psi$.

Consider now the map
\[\begin{split}
&\hat T:(X\times X/\sim)\times \mathbb{Z}_2\times\mathbb{Z}_2\to(X\times X/\sim)\times \mathbb{Z}_2\times\mathbb{Z}_2;\\
&\hat T(\xi,i,j):=((\overline{T\times T})\xi,i+\varphi(\xi),j+\varphi(\xi)+1).
\end{split}
\]
Note that $\hat T$ preserves the measure
\[
\hat\mu:=\int_{\overline{X}} \pi_*(\mu_{\bar x}\otimes\mu_{\bar x})\otimes \tfrac{\de_0+\de_1}{2}\otimes \tfrac{\de_0+\de_1}{2}\,dP(\bar x)
\]
and $(\hat T,\hat\mu)$ corresponds to a joining of $(T\times T,\tilde\mu)$ and $((T\times T)\circ R,\tilde\mu)$ of the form $\int_{\overline{X}}(Id\times Id)_*\pi_*(\mu_{\bar x}\otimes\mu_{\bar x})\otimes\tfrac{\de_0+\de_1}{2}\otimes \tfrac{\de_0+\de_1}{2}\,dP(\bar x)$. Since $((T\times T)\circ R,\tilde\mu)\in Erg^{\perp}$ and $(T\times T,\tilde\mu)$ is a multiplier of $Erg^\perp$, it follows that $(\hat T,\hat\mu)\in Erg^{\perp}$. However the map $(\xi,i,j)\mapsto j-i$ yields rotation by $1$ on $\mathbb Z_2$ as a factor of $(\hat T,\hat\mu)$ which is a contradiction. Hence,  there is no multiplier of $Erg^{\perp}$ with  non-trivial weakly mixing fibers.

%
	\end{proof}

Before passing to prove the main result of this section, let us deal with the measurability issues concerning the choice of a discrete factor. For this purpose, let us recall the classical Kuratowski-Ryll-Nardzewski Theorem.
\begin{theorem}[Kuratowski-Ryll-Nardzewski, \cite{KuRyN}]\label{th:selector}
Let $(X,\mathcal B)$ be a Polish space with the Borel $\sigma$-algebra and let 
$(\Omega,\mathcal C)$ be a measurable space. Let $f$ be a multifunction on 
$\Omega$ which takes values in closed subsets of $X$. Assume moreover that $f$ 
is weakly measurable, that is,  for every open $A\subset X$, the set 
$\{\omega\in\Omega;\,f(\omega)\cap A\neq\emptyset\}$ is measurable. Then, there 
exists a measurable selector $F:\Omega\to X$ of $f$, i.e.\  $F(\omega)\in 
f(\omega)$ for every $\omega\in\Omega$.
\end{theorem}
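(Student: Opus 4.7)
The plan is to construct $F$ as the pointwise limit of a Cauchy sequence of measurable $2^{-n}$-approximations, following the classical Kuratowski--Ryll-Nardzewski strategy. Fix a complete metric $d$ on $X$ compatible with its Polish topology, shrunken if necessary so that $d<1/2$, let $(x_k)_{k\in\n}$ be a countable dense subset of $X$, and write $B(x,r)$ for the open $d$-ball of radius $r$ around $x$ and $d(x,A):=\inf_{a\in A}d(x,a)$.

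I would build inductively measurable maps $F_n:\Omega\to X$ such that (i) $d(F_n(\omega), f(\omega))<2^{-n}$ and (ii) $d(F_n(\omega),F_{n-1}(\omega))<2^{-n+1}$ for every $\omega\in\Omega$ (with (ii) required for $n\ge 1$). The base case $F_0\equiv x_1$ satisfies (i) because $d<1/2<2^0$. Assuming $F_n$ has been built, define for each $k\in\n$
\[
A_k:=\{\omega:d(F_n(\omega),x_k)<2^{-n+1}\}\cap\{\omega:f(\omega)\cap B(x_k,2^{-(n+1)})\ne\emptyset\};
\]
the first factor is measurable because $F_n$ is, and the second is measurable by the weak measurability hypothesis on $f$. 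The family $\{A_k\}_{k\in\n}$ covers $\Omega$: given $\omega$, use (i) to pick $y\in f(\omega)$ with $d(F_n(\omega),y)<2^{-n}$, then use density to pick $x_k$ with $d(x_k,y)<2^{-(n+1)}$, and observe that $d(F_n(\omega),x_k)<2^{-n}+2^{-(n+1)}<2^{-n+1}$ while $y\in f(\omega)\cap B(x_k,2^{-(n+1)})$. Letting $k(\omega)$ be the least $k$ with $\omega\in A_k$ and setting $F_{n+1}(\omega):=x_{k(\omega)}$ yields a measurable $F_{n+1}$ satisfying (i) and (ii) at level $n+1$.

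Once $(F_n)$ has been built, (ii) implies that $(F_n(\omega))_n$ is Cauchy uniformly in $\omega$, so completeness of $d$ provides a pointwise limit $F:\Omega\to X$; this $F$ is measurable as a pointwise limit of measurable maps into a metric space, and $F(\omega)\in f(\omega)$ because (i) forces convergence into $f(\omega)$ and $f(\omega)$ is closed. The main technical point is calibrating the two constants in (i) and (ii) so that the triangle-inequality estimate $2^{-n}+2^{-(n+1)}<2^{-n+1}$ closes up the inductive step simultaneously in both conditions; everything else is routine bookkeeping around the weak measurability of $f$ and the measurability of the approximants $F_n$.
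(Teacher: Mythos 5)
The paper does not prove this statement at all: it is quoted as the classical Kuratowski--Ryll-Nardzewski selection theorem, cited from \cite{KuRyN}, and used as a black box in the proof of Lemma~\ref{prop:measfactor}, so there is no in-paper proof to compare with. Your argument is the standard successive-approximation proof of that theorem and is essentially correct: the two factors of $A_k$ are measurable for exactly the reasons you give (measurability of $F_n$, weak measurability of $f$ applied to the open ball), the family $\{A_k\}$ covers $\Omega$, the least-index choice makes $F_{n+1}$ measurable because it is countably valued on the measurable pieces $A_k\setminus\bigcup_{j<k}A_j$, and the limit lies in $f(\omega)$ since $d(F_n(\omega),f(\omega))\to 0$ and $f(\omega)$ is closed (and, implicitly, nonempty --- worth stating, since both your base case and the covering step use it). The one blemish is an off-by-one in the exponent of your condition (ii): as written, (ii) at level $n+1$ requires $d(F_{n+1}(\omega),F_n(\omega))<2^{-n}$, whereas the first factor of $A_k$ together with your estimate $2^{-n}+2^{-(n+1)}<2^{-n+1}$ only delivers $d(F_{n+1}(\omega),F_n(\omega))<2^{-n+1}$. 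This is harmless --- the increments are still summable, so $(F_n(\omega))_n$ is uniformly Cauchy and the limiting argument goes through unchanged --- but you should either restate (ii) with the bound $2^{-n+2}$ (equivalently, record $d(F_{n+1},F_n)<2^{-n+1}$ and invoke summability) or tighten nothing and note that (ii) is never used inside the induction, only at the very end; the ``simultaneous calibration'' you emphasize is really only needed for condition (i).
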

We use the above result to show that we can measurably assign an eigenvalue to 
any element of ergodic decomposition. Recall the notions of space of ergodic 
components $\overline{X}$ and its subsets $\overline{X}_n$ and 
$\overline{X}_{\infty}$ from \eqref{def: ergcomp}.
\begin{lemma}\label{prop:measfactor}
Let $T\in \operatorname{Aut} (X,\mathcal B, \mu)$  and let
\[
\mu=\int_{\overline{X}}\mu_{\bar x}\,dP(\bar x)
\]
be the ergodic decomposition. Assume that $P$-a.e. fiber automorphism is 
non-weakly mixing (i.e.\ possesses a non-trivial eigenvalue). Then there exists 
a measurable map $F:\overline{X}_{\infty}\to L^2(Z,\mathcal D, \kappa)$, where  
$(Z,\mathcal D, \kappa)$ is defined in \eqref{ed}, such that $F(\bar x)$ is an 
eigenfunction of modulus 1 {of $T_{\bar x}$ in $L^2_0(Z,\kappa)$ (identified 
with $L^2_0(X,\mu_{\bar x})$).}
\end{lemma}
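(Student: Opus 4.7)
The plan is to apply the Kuratowski--Ryll-Nardzewski selection theorem (Theorem~\ref{th:selector}) to the multifunction
\[
E(\bar x) := \bigl\{f \in L^2_0(Z,\mathcal D,\kappa): \|f\|_2 = 1,\ |f| = 1\ \kappa\text{-a.e.},\ U_{T_{\bar x}}f = \alpha f\text{ for some }\alpha \in \mathbb S^1\bigr\}
\]
on $\overline X_\infty$, taking values in closed subsets of the Polish space $L^2(Z,\mathcal D,\kappa)$; a measurable selector of $E$ is exactly the map $F$ we are after.

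Non-emptiness of $E(\bar x)$ for $P$-a.e.\ $\bar x$ is immediate from the hypothesis that $T_{\bar x}$ has a non-trivial eigenvalue $\alpha\neq 1$: ergodicity of $T_{\bar x}$ forces $|U_{T_{\bar x}}f|=|f|$ to be $T_{\bar x}$-invariant, hence constant, hence normalisable to $1$, and the eigenvalue relation $\alpha\int f\,d\kappa = \int f\,d\kappa$ forces $\int f\,d\kappa = 0$. For closedness of $E(\bar x)$ in $L^2$, from $f_n \to f$ in $L^2$ with $f_n\in E(\bar x)$ one passes to an a.e.\ converging subsequence (giving $|f|=1$ a.e.) and then to a further subsequence on which the eigenvalues $\alpha_n = \langle U_{T_{\bar x}}f_n, f_n\rangle \in \mathbb S^1$ converge to some $\alpha$; passing to the limit in $U_{T_{\bar x}} f_n = \alpha_n f_n$ yields $U_{T_{\bar x}}f = \alpha f$, and $\alpha\neq 1$ follows because $|f|=1$ together with $\int f\,d\kappa = 0$ rules out constants.

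The key step is weak measurability of $\bar x\mapsto E(\bar x)$. Here I would use that the ergodic decomposition furnishes a measurable assignment $\bar x \mapsto T_{\bar x}$ into the Polish group $\operatorname{Aut}(Z,\mathcal D,\kappa)$, that $S\mapsto U_S$ is continuous into $\mathcal U(L^2)$ with the strong operator topology, and that evaluation $(U,f)\mapsto Uf$ is jointly continuous on norm-bounded sets. This renders the graph
\[
G = \bigl\{(\bar x, f)\in \overline X_\infty\times L^2(Z,\mathcal D,\kappa): f\in E(\bar x)\bigr\}
\]
Borel: the conditions $\|f\|_2 = 1$, $\int f\,d\kappa = 0$ and $\|U_{T_{\bar x}}f - \langle U_{T_{\bar x}}f,f\rangle f\|_2 = 0$ are plainly Borel, while $\{f\in L^2 : |f|=1\text{ a.e.}\}$ is (sequentially, hence topologically) closed in $L^2$ via the standard subsequence-a.e.\ argument. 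For every open $V\subset L^2$, the set $\{\bar x: E(\bar x)\cap V \neq \emptyset\}$ is then the projection of a Borel set, hence analytic and so universally measurable, which is enough to invoke Theorem~\ref{th:selector} in the completion of $(\overline X_\infty, P)$ and obtain the measurable selector $F$.

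The main obstacle I anticipate is the bookkeeping around the three different topologies (Polish topology on $\operatorname{Aut}(Z,\mathcal D,\kappa)$, strong operator topology on $\mathcal U(L^2)$, norm topology on $L^2$) needed to verify rigorously that $G$ is Borel. A back-up route, should this prove awkward, is to first select a non-trivial eigenvalue $\alpha(\bar x)$ measurably by applying Lemmas~\ref{msrbl:mes}--\ref{msrbl:val} and \ref{lm: nonergeigenv} to the spectral measures $\sigma_{h_n,\mu_{\bar x}}$ of a countable dense family $\{h_n\}$, and then construct $F(\bar x)$ as the $L^2$-limit of the ergodic averages $\frac{1}{N}\sum_{k=0}^{N-1}\alpha(\bar x)^{-k}U_{T_{\bar x}}^k h_{n(\bar x)}$ normalised to modulus one.
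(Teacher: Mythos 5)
Your proposal is correct and follows essentially the same route as the paper: a Kuratowski--Ryll-Nardzewski selection applied to the multifunction of modulus-one, mean-zero eigenfunctions, with closedness verified by the same subsequence argument and weak measurability obtained by exhibiting the relevant set as an analytic projection of a Borel set (the paper projects $\varphi_A^{-1}(0)$ for $\varphi_A(\bar x,\lambda,f)=f\circ T_{\bar x}-\lambda f$, you project the Borel graph), measurable with respect to the completed $\sigma$-algebra. The spectral-measure back-up route is unnecessary.
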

\begin{proof}
	We will define $F$ on the set 
	$\overline{X}_{\infty}$ by considering a multifunction 
	$H$ defined below and taking $F$ as a measurable selector given by Theorem 
	\ref{th:selector}. Therefore, the reminder of the proof is devoted to 
	picking a proper multifunction and checking that the assumptions of Theorem 
	\ref{th:selector} are satisfied.
	
	Let $W\subset L^2_0(Z,\mathcal D, \kappa)$ be the subset of function of 
	integral 0 and modulus 1 (note that $W$ is a Polish space in 
	$L^2$-topology).
	Let $H$ be a multifunction which assigns to each element $T_{\bar 
	x}:=(T,\mu_{\bar x})$ the set $H(T_{\bar x})\subset W$ of its 
	eigenfunctions. Note that this is a closed set. 
	Indeed, assume that $f_n\to f\in L^2_0(Z,\mathcal D, \kappa)$, where $(f_n)_{n\in\n}$ is a sequence of 
	functions in $H(T_{\bar x})$. Then $f$ is also of modulus 1. Moreover, by 
	using the compactness of the circle and passing to a subsequence if 
	necessary, we also have that the sequence $(\la_n)_{n\in\n}$ of the 
	corresponding eigenvalues   converges to some number $\la \in\mathbb S^1$. 
	It is now easy to check that $f$ is an eigenfunction corresponding to the 
	eigenvalue $\la$.
	
	We now prove that $H$ is weakly measurable, so that we can apply Theorem~\ref{th:selector}.
	Let $A\subset W$ be open. Without loss of generality, we can assume that 
	$\overline X_{\infty}$ is a metric compact space. Consider the map 
	$\varphi_A:\overline{X}_{\infty}\times \mathbb S^1\times A\to 
	L^2(Z,\mathcal D, \kappa)$,  given by
	\[
	\varphi_A(\bar x, \la, f)=f\circ T_{\bar x}-\la f.
	\]
	By \eqref{ed}, $T_{\bar x}\in \operatorname{Aut}(Z,\mathcal D, \kappa)$. Hence, the right-hand side of the above formula is an element of   $L^2(Z,\mathcal D, \kappa)$. Let
	\[
	\pi_{\overline X}:\overline{X}_{\infty}\times \mathbb S^1\times A\to \overline{X},\ \pi_{\overline X}(\bar x,\lambda, f)=\bar x
	\]
	be the projection on the first coordinate.
	To prove the weak measurability of $H$, we need to show that the set
	\[
	\{\bar x\in \overline X_{\infty}:\, H(T_{\bar x})\cap A\neq\emptyset 
	\}=\pi_{\overline 
		X}(\varphi^{-1}_A({0})) 
	\]
	is measurable.
	Since the map $\bar x\mapsto T_{\bar x}$ is Borel measurable, then so is 
	$\varphi_A$. Thus $\pi_{\overline 
		X}(\varphi^{-1}_A({0})) $ is analytic, hence measurable wrt. $\mathcal 
		C$ being the $\sigma$-algebra of $P$-measurable sets. It remains to  
		use Theorem \ref{th:selector} for $\Omega=\overline X_{\infty}$ and $\mathcal C$.

%
	\end{proof}
\begin{remark}  The following result follows immediately from the fact 
that the class of automorphisms whose a.a.\ ergodic components have discrete 
spectra is a characteristic class as proved in \cite{Ka-Ku-Le-Ru}. Hence, for 
$T$, there exists the largest factor of 
$T$ belonging to this characteristic class, and this factor considered on each 
fiber $T_{\bar x}$ is the Kronecker 
factor of $T_{\bar x}$, see \cite{Ka-Ku-Le-Ru}, Section 2.3.1. However, we need 
a special form of this factor, hence we give the complete proof below.
\end{remark}

\begin{lemma}\label{cor:measfactor}
	Let $T$ satisfy the assumptions of Lemma \ref{prop:measfactor}. Then there 
	exists a non-trivial factor of $T$ whose ergodic decomposition 
	consists of ergodic rotations.
\end{lemma}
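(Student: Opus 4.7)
The strategy is to use the measurable eigenfunction selector from Lemma \ref{prop:measfactor} to glue together, measurably in $\bar x$, the one-dimensional eigenfunction factors of the fibre systems $T_{\bar x}$ into a single factor of $T$.

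First I would extract a measurable eigenvalue function $\lambda:\overline{X}_{\infty}\to\mathbb{S}^1$ from $F$ via
\[
\lambda(\bar x):=\int_Z F(\bar x)(T_{\bar x}z)\,\overline{F(\bar x)(z)}\,d\kappa(z),
\]
which is measurable since both $F$ and $\bar x\mapsto T_{\bar x}$ are, and which satisfies $F(\bar x)\circ T_{\bar x}=\lambda(\bar x)F(\bar x)$. On each atomic piece $\overline{X}_n$ with $n\geq 2$ (if $P$-charged at all) one may explicitly set $F(\bar x)(j):=e^{2\pi ij/n}$ and $\lambda(\bar x):=e^{2\pi i/n}$. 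Using the coordinates of \eqref{ed}, I would define $\Phi:X\to\mathbb{S}^1$ by $\Phi(\bar x,z):=F(\bar x)(z)$ and the map
\[
\Psi:X\to\overline{X}\times\mathbb{S}^1,\qquad \Psi(x):=(\bar x,\Phi(x)).
\]
A direct computation using the eigenfunction equation yields $\Psi\circ T=\tilde T\circ\Psi$, where $\tilde T(\bar x,w):=(\bar x,\lambda(\bar x)w)$. Setting $\nu:=\Psi_*\mu$, this exhibits $(\overline{X}\times\mathbb{S}^1,\nu,\tilde T)$ as a factor of $T$.

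To verify that the ergodic decomposition of this factor consists of ergodic rotations, I would disintegrate $\nu$ over $\overline{X}$, obtaining $\nu_{\bar x}=F(\bar x)_*\kappa$. For each $\bar x$ the fibre system $(\mathbb{S}^1,\nu_{\bar x},w\mapsto\lambda(\bar x)w)$ is a factor of the ergodic system $T_{\bar x}$ through $F(\bar x)$, hence is itself ergodic. An ergodic invariant probability measure for a rotation of $\mathbb{S}^1$ is necessarily Haar measure on the closed subgroup generated by the rotation---Lebesgue on $\mathbb{S}^1$ when $\lambda(\bar x)$ has irrational angle, and the normalised counting measure on a finite cyclic subgroup when $\lambda(\bar x)$ is a root of unity---so each fibre is an ergodic group rotation. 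Non-triviality of the factor follows from $F(\bar x)\in L^2_0(Z,\kappa)$ with $|F(\bar x)|=1$: these properties force $F(\bar x)$ to be non-constant, so $\nu_{\bar x}$ is not a Dirac mass. (Note also that $\lambda(\bar x)\neq 1$ a.e., since otherwise the ergodicity of $T_{\bar x}$ would force $F(\bar x)$ to be constant.)

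The main obstacle---and the reason the authors emphasise that ``we need a special form of this factor''---is the measurability of the pieces involved: the eigenfunction $F(\bar x)$ \emph{and} the eigenvalue $\lambda(\bar x)$ must vary measurably in $\bar x$, so that $\Phi$ and $\Psi$ are honest measurable maps on $X$. Lemma \ref{prop:measfactor} provides exactly this for $F$, while the formula above for $\lambda$, together with the measurability of $\bar x\mapsto T_{\bar x}$ from the ergodic decomposition theorem, takes care of the eigenvalue side. Once those measurability issues are settled, the verification of the factor property and the description of its ergodic decomposition are essentially routine.
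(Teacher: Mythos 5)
Your proof is correct and follows essentially the same route as the paper: the measurable eigenfunction selector $F$ of Lemma \ref{prop:measfactor} gives a measurable eigenvalue map $\bar x\mapsto\varphi(\bar x)$, and the factor is the skew rotation $(\bar x,w)\mapsto(\bar x,\varphi(\bar x)w)$ via the factorizing map $(\bar x,z)\mapsto(\bar x,F(\bar x)(z))$. If anything, you are slightly more careful than the paper about the image measure: the paper simply declares it to be $P\otimes Leb_{\mathbb S^1}$, whereas you take the push-forward and identify its fibre disintegration as Haar-type measures (Lebesgue or uniform on a finite orbit), which also covers the case of root-of-unity eigenvalues and the atomic pieces $\overline X_n$.
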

\begin{proof}
	Let $F$ be given by Lemma \ref{prop:measfactor}. Note that for every $\bar 
	x\in \overline{X}_{\infty}$ the corresponding eigenvalue $\varphi(\bar x)$ is equal 
	to $\frac{F(\bar x)\circ T_{\bar x}}{F(\bar x)}$, hence it depends 
	measurably on $\bar x$. Thus, in view of the fact that the map $\mathbb 
	S^1\ni\al\mapsto R_\al\in {\operatorname{Aut}(\mathbb S^1, Leb_{\mathbb S^1})}$ is 
	continuous, the 
	automorphism $S\in \operatorname{Aut}(\overline X\times \mathbb S^1,P\otimes Leb_{\mathbb 
	S^1})$,  
\begin{equation}\label{eq: factorform}
S(\bar x, r):=(\bar x, \varphi(\bar x)r)
\end{equation}
is the desired factor 
	and factorizing map $J:\overline X\times Z\to \overline X\times \mathbb 
	S^1$ is given by
	\begin{equation*}
	J(\bar x, z)=\left(\bar x,F(\bar x)(z)\right).
	\end{equation*}
	\end{proof}
{


\noindent
\begin{proof}[{\textit Proof of Theorem~\ref{t:main}.}]
Let $(X,\mathcal B,\mu, T)\in \mathcal M(Erg^{\perp})$ and let 
$\mu=\int_{\overline{X}}\mu_{\bar x}\,dP(\bar x)$  be its ergodic 
decomposition. In view of Corollary \ref{cor: Xn0mes}, we have $P(\overline X_1\cup \overline X_{\infty})=1$. We will show that $P(\overline X_1)=1$. Assume by contradiction, that $P(\overline X_\infty)>0$.

Consider first  the case when  $P\left(\mathcal{WM}\right)>0$, where 
$\mathcal{WM}:=\{\bar x\in \overline{X}_{\infty}: T_{\bar x}\text{ is weakly mixing}\}$ 
(it is measurable via Lemma~\ref{lem:wm}). {Then $T$ can be 
decomposed into a disjoint action of two automorphisms $\tilde T$ and 
$\tilde{T}^\ast$.  Here, $\tilde{T}$ stands for the restriction of $T$ to the 
union of fibers corresponding to $\mathcal{WM}$, i.e., it is measure 
$\tilde\mu$-preserving,} where the ergodic decomposition of $\tilde{T}$ is of 
the form
\[
\tilde\mu=\frac{1}{P(\mathcal{WM})}\int_{\mathcal{WM}}\mu_{\bar x}\, dP(\bar x)
\]
and the automorphism $\tilde T^*$  is the restriction of $T$ to the union of 
the fibers corresponding to $\overline X\setminus \mathcal{WM}$.
Then $(\tilde{T},\tilde{\mu})$ satisfies the assumption of Lemma \ref{prop: wmcase} and thus, it is not a multiplier of $Erg^{\perp}$, that is, there exists $S\in Erg^\perp$ and a joining $\tilde\eta\in J(\tilde T,S)$ such that $(\tilde T\times S,\tilde\eta)\notin Erg^\perp$. It is now enough to join $\tilde T^*$ and $S$ independently, to get a non-trivial joining $\eta\in J(T,S)$ such that $(T\times S,\eta)\notin Erg^\perp$, which is a contradiction with the fact that $T\in Erg^{\perp}$.

Thus, we can assume that for $P$-a.e.\ $\bar x\in\overline X_{\infty}$, the automorphism $T_{\bar x}$ has a non-trivial eigenvalue. Let $\mu_{\infty}=\frac{1}{P(\overline X_{\infty})}\int_{\overline X_{\infty}} \mu_{\bar x}\, dP(x)$.
Then, by Lemma \ref{cor:measfactor}, there exists a 
factor  $\hat T$ of $(T,\mu_{\infty})$ such that all ergodic components $\hat T_{\bar x}$ of $\hat T$ 
are rotations on the circle. Note that for a.e.\ pair of $(\bar x,\bar y)\in 
\overline{X}_{\infty}\times \overline{X}_{\infty}$, the associated rotations are disjoint in view 
of Theorem \ref{classification}. Then, $\hat T$ has exactly the form of Example~\ref{example1}, which is not in $\mathcal M(Erg^{\perp})$. Since, by Corollary \ref{cor:multclass}, $\mathcal M(Erg^{\perp})$ is a characteristic class, we get that $(T,\mu_{\infty})\notin \mathcal M(Erg^{\perp})$. Thus, again by joining the restriction of $T$ to the fibers in $\overline X_1$ independently, we get that $(T,\mu)\notin \mathcal{M}(Erg^\perp)$.

Hence $P(\overline X_1)=1$, which means that $T$ is an identity. This proves that $\mathcal{M}(Erg^\perp)\subset ID$. The opposite inclusion follows directly from Proposition \ref{idperp+}. This finishes the proof.
\end{proof}
	\section{$Erg^{\perp}$ is closed under taking products}\label{sec: products}
{As Theorem~\ref{t:main} shows, the class $Erg^{\perp}$ is not closed under 
taking joinings due to an interplay between fiber automorphisms over the 
``common'' part of ergodic components. It turns out that this phenomenon can 
not happen if the spaces of ergodic components are independent -- in this 
section we show the  class $Erg^\perp$ is actually closed under taking the 
Cartesian products.}

\begin{theorem}\label{closedonproducts}
	Assume that $(X,\mathcal B,\mu, T), (Y,\mathcal C,\nu, S)\in Erg^\perp$. Then
	\[
	(X\times Y,\mathcal B\otimes\mathcal C,\mu\otimes\nu, T\times S)\in Erg^\perp.
	\]
\end{theorem}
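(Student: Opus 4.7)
The plan is to apply Theorem~\ref{classification} twice: first to upgrade the hypotheses $T,S\in Erg^{\perp}$ into fiber-wise disjointness statements, and then to deduce the conclusion $T\times S\in Erg^{\perp}$ from pairwise disjointness of the ergodic components of $T\times S$.

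I would start by identifying the ergodic decomposition of $(T\times S,\mu\otimes\nu)$. With $\mu=\int \mu_{\bar x}\,dP(\bar x)$ and $\nu=\int \nu_{\bar y}\,dQ(\bar y)$, the candidate decomposition is $\int(\mu_{\bar x}\otimes\nu_{\bar y})\,d(P\otimes Q)(\bar x,\bar y)$. To know this is genuinely the ergodic decomposition I need $(T\times S,\mu_{\bar x}\otimes\nu_{\bar y})$ to be ergodic for $(P\otimes Q)$-a.e.\ pair. The classical fact that disjoint ergodic systems have an ergodic product (their eigenvalue groups intersect trivially, since otherwise they would share a non-trivial Kronecker factor, contradicting disjointness) reduces this to $T_{\bar x}\perp S_{\bar y}$ $(P\otimes Q)$-a.e., which I would get by applying Proposition~\ref{ergdisj} with $T\in Erg^{\perp}$ and $R=S_{\bar y}$ for each ergodic fiber of $S$, and then invoking Fubini (using the measurability of the ergodic decomposition from Section~\ref{subs:notation}).

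With the ergodic decomposition of $T\times S$ parameterised by $(\overline X\times\overline Y,P\otimes Q)$, Theorem~\ref{classification} reduces the problem to showing that $T_{\bar x_1}\times S_{\bar y_1}\perp T_{\bar x_2}\times S_{\bar y_2}$ for $(P\otimes Q)^{\otimes 2}$-a.e.\ quadruple $(\bar x_1,\bar y_1,\bar x_2,\bar y_2)$. I would build this up in two telescoping applications of Proposition~\ref{ergdisj}. Since $T_{\bar x_1}\times S_{\bar y_1}$ is ergodic and $T\in Erg^{\perp}$, Proposition~\ref{ergdisj} supplies $T_{\bar x_2}\perp T_{\bar x_1}\times S_{\bar y_1}$ for a.e.\ $\bar x_2$; the product $T_{\bar x_1}\times S_{\bar y_1}\times T_{\bar x_2}$ is therefore also ergodic. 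Applying Proposition~\ref{ergdisj} again, this time with $S\in Erg^{\perp}$ and this triple as $R$, yields $S_{\bar y_2}\perp T_{\bar x_1}\times S_{\bar y_1}\times T_{\bar x_2}$ for a.e.\ $\bar y_2$.

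Given any joining $\rho\in J(T_{\bar x_1}\times S_{\bar y_1},T_{\bar x_2}\times S_{\bar y_2})$ on the four coordinates, the first disjointness forces the three-coordinate marginal onto $(X,Y,X)$ to equal the product $\mu_{\bar x_1}\otimes\nu_{\bar y_1}\otimes\mu_{\bar x_2}$; the second disjointness then forces $\rho$ itself, viewed as a joining of this ergodic triple with $S_{\bar y_2}$, to equal the full four-fold product measure. The main obstacle I foresee is the measurability/Fubini bookkeeping needed so that the various conull sets produced by Proposition~\ref{ergdisj} (each depending on the ergodic system $R$ fed in) assemble into a conull subset of $(\overline X\times\overline Y)^{2}$; this should rely on the measurable dependence of the maps $(\bar x_1,\bar y_1)\mapsto T_{\bar x_1}\times S_{\bar y_1}$ and similar, which is already used implicitly in the earlier parts of the paper.
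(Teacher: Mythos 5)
Your argument is sound, but it is a genuinely different route from the paper's. You verify the criterion of Theorem~\ref{classification} for $T\times S$: you identify the ergodic decomposition of $\mu\otimes\nu$ as $\int\mu_{\bar x}\otimes\nu_{\bar y}\,d(P\otimes Q)$ (using fiberwise disjointness to get ergodicity of the fiber products) and then obtain pairwise disjointness of the components $T_{\bar x_1}\times S_{\bar y_1}$ and $T_{\bar x_2}\times S_{\bar y_2}$ by two telescoping applications of Proposition~\ref{ergdisj}; the pointwise deduction (first killing the $(1,2,3)$-marginal of a joining, then treating the ergodic triple against $S_{\bar y_2}$) is correct. The paper instead never touches the ergodic decomposition of the product: it takes an arbitrary $\lambda\in J(T\times S,R)$ with $R$ ergodic, uses $T\in Erg^\perp$ to force $(\pi^{X,Z})_*\lambda=\mu\otimes\rho$, disintegrates $\lambda$ over $Inv(T)$ only, shows $(T\times R,\mu_{\bar x}\otimes\rho)$ is ergodic for $P$-a.e.\ $\bar x$ by an eigenvalue argument (Lemma~\ref{lm: nonergeigenv} together with \eqref{commonev}, with measurability supplied by Lemma~\ref{msrbl:val}), and then invokes $S\in Erg^\perp$ directly against this ergodic system to get $\lambda_{\bar x}=\mu_{\bar x}\otimes\nu\otimes\rho$. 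The trade-off is exactly the obstacle you flag: your approach needs joint measurability of disjointness sets such as $\{(\bar x_1,\bar y_1,\bar x_2):T_{\bar x_2}\perp T_{\bar x_1}\times S_{\bar y_1}\}$ so that the $R$-dependent null sets from Proposition~\ref{ergdisj} assemble via Fubini (the paper handles analogous measurability only by citing Section~3.4 of \cite{Au-Go-Le-Ru}), and it leans on the full strength of Theorem~\ref{classification} in both directions; the paper's proof avoids all of this, replacing measurability of disjointness by measurability of spectral measures and uniqueness of disintegration, and in exchange yields no information about the ergodic decomposition of $T\times S$, which your argument produces as a byproduct. So: correct modulo the Fubini bookkeeping you yourself identify, but structured quite differently from the paper's more self-contained joining argument.
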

\begin{proof}
	Let $(Z,\mathcal D,\rho, R)$ be an arbitrary ergodic system. Recall first that all systems under consideration may be viewed as measure-preserving homeomorphisms of compact metric spaces. The above observation allows one to utilize Lemma~\ref{msrbl:val} later in the proof.
	
	On the space $X\times Y\times Z$ for every $A\in\{X,Y,Z,X\times Y,X\times 
	Z,Y\times Z\}$ denote by $\pi^{A}:X\times Y\times Z\to A$ the standard 
	projection on the corresponding coordinates. Consider any joining $\la\in 
	J((T\times S,\mu\otimes\nu),(R,\rho))$. We aim at showing that 
	$\la=\mu\otimes\nu\otimes\rho$.
	
{First, note that $(\pi^{X,Z})_*\la=\mu\otimes\rho$ and $(\pi^{Y,Z})_*\la=\nu\otimes\rho$ as $T,S\in Erg^\perp$. 
Let
	\[
	\mu=\int_{\overline{X}}\mu_{\bar x}\ dP(\bar x)
	\]
	be the ergodic decomposition of $\mu$ (with  $P=\mu|_{Inv(T)}$). Since 
	$Inv(T)$ is also a factor of $(T\times S\times R,\lambda)$ and 
	$(\pi^X)_\ast\lambda=\mu$,
	\begin{equation}\label{dis:la}
	\la=\int_{\overline{X}}\la_{\bar x}\ dP(\bar x)
	\end{equation}
and $(\pi^X)_*\la_{\bar x}=\mu_{\bar x}$ $P$-a.e., by the uniqueness of disintegrations.} 
	Moreover, note that
	\[
	\mu\otimes\rho=\int_{\overline{X}}\mu_{\bar x}\otimes\rho\ dP(\bar x).
	\]
	Since $(\pi^{X,Z})_*\la=\mu\otimes\rho$, again by using the uniqueness of disintegration, we get
	\begin{equation}\label{projXZ}
	(\pi^{X,Z})_*\la_{\bar x}=\mu_{\bar x}\otimes\rho\quad P\text{-a.e.}
	\end{equation}
	Analogously, since $(\pi^{X,Y})_*\la=\mu\otimes\nu$, we also have
	 	\begin{equation}\label{projXY}
	 (\pi^{X,Y})_*\la_{\bar x}=\mu_{\bar x}\otimes\nu                 \quad P\text{-a.e.}
	 \end{equation}
	 In particular, $(\pi^{Y})_*\la_{\bar x}=\nu$. Thus, by \eqref{projXZ}, we 
	 have
	 \[
	 \la_{\bar x}\in J((S,\nu),(T\times R,\mu_{\bar x}\otimes \rho))\quad P\text{-a.e.}
	 \]
	 What remains to show is that
	 \begin{equation}\label{mes:nerg}
	 P(\{\bar x:\ (T\times R,\mu_{\bar x}\otimes \rho)\ \text{is not ergodic}\})=0.
	 \end{equation}
	 Indeed, if \eqref{mes:nerg} holds then since $S\in Erg^\perp$ we obtain that
	 \[
	 \la_{\bar x}=\mu_{\bar x}\otimes \nu\otimes\rho\quad\text{ $P$-a.e.},
	 \]
	 which together with \eqref{dis:la} yields $\la=\mu\otimes\nu\otimes\rho$.
	
	 To show \eqref{mes:nerg}, recall first that $R$ can have at most countably 
	 many eigenvalues. Moreover, the Cartesian product of two ergodic systems 
	 is not ergodic if and only if they have a non-trivial common eigenvalue. 
	 It is thus enough to show that for any fixed $\al\in\mathbb 
	 T\setminus\{0\}$ we have
	 \begin{equation}\label{mes:nerg2}
	 P(\{\bar x:\ \al\ \text{is an eigenvalue of}\ (T,\mu_{\bar x})\})=0.
	 \end{equation}
%
	Assume that \eqref{mes:nerg2} does not hold, that is,
	 \begin{equation}\label{mes:pos}
	 P(\{\bar x:\ \al\ \text{is an eigenvalue of}\ (T,\mu_{\bar x})\})>0.
	 \end{equation}
	 Then by Lemma \ref{lm: nonergeigenv}, $\alpha$ is an eigenvalue of $T$. 
	 This is a contradiction with $\eqref{commonev}$. Hence, we proved 
	 \eqref{mes:nerg2} which in turn completes the proof of the theorem.
	\end{proof}
\noindent By induction, we obtain from the above result the following corollary.
\begin{corollary}
	If  ${(X_i,\mathcal B_i,\mu_i, T_i)}_{i\in\n}$ is a sequence of elements of $Erg^{\perp}$ then
	\[
	\left(\prod_{i=0}^{\infty}X_i,\bigotimes_{i=0}^{\infty}\mathcal B_i,\bigotimes_{i=0}^{\infty}\mathcal \mu_i, \prod_{i=0}^{\infty}T_i\right)\ \in\ Erg^{\perp}.
	\]
	\end{corollary}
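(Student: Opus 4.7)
The plan is to bootstrap from Theorem~\ref{closedonproducts} (the two-factor case) in two stages: first a finite induction, then a cylinder-approximation argument to pass to the countable product.

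For the finite stage, I would proceed by induction on $N\geq 1$. The base $N=1$ is given. For the inductive step, assuming $(X_0\times\cdots\times X_{N-1},\bigotimes_{i=0}^{N-1}\mu_i,\prod_{i=0}^{N-1}T_i)\in Erg^\perp$, I apply Theorem~\ref{closedonproducts} with this finite product playing the role of $T$ and $T_N$ playing the role of $S$; both are in $Erg^\perp$, so their Cartesian product, namely $\prod_{i=0}^{N}T_i$ with measure $\bigotimes_{i=0}^{N}\mu_i$, is in $Erg^\perp$.

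For the countable stage, write $T:=\prod_{i=0}^{\infty}T_i$ on $X:=\prod_{i=0}^{\infty}X_i$ with $\mu:=\bigotimes_{i=0}^\infty\mu_i$. Let $(Z,\mathcal D,\rho,R)$ be an arbitrary ergodic system and fix $\lambda\in J(T,R)$. For each $N\geq 1$, let $\pi_N:X\times Z\to X_0\times\cdots\times X_{N-1}\times Z$ be the natural projection and set $\lambda_N:=(\pi_N)_\ast\lambda$. Then $\lambda_N\in J\!\left(\prod_{i=0}^{N-1}T_i,R\right)$, and by the finite case just established together with the assumption $R\in Erg$, we obtain
\[
\lambda_N \;=\; \Bigl(\bigotimes_{i=0}^{N-1}\mu_i\Bigr)\otimes\rho.
\]
Since the finite cylinder sets of the form $A_0\times\cdots\times A_{N-1}\times\prod_{i\geq N}X_i\times B$ (with $A_i\in\mathcal B_i$, $B\in\mathcal D$, $N\in\n$) generate the product $\sigma$-algebra $\bigotimes_{i=0}^\infty\mathcal B_i\otimes\mathcal D$, the measure $\lambda$ is determined by the family $(\lambda_N)_{N\geq 1}$, and therefore $\lambda=\mu\otimes\rho$. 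As $R$ was arbitrary, $T\in Erg^\perp$.

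There is no real obstacle here: the only step that could look delicate is the cylinder-approximation passage to the limit, but this is precisely the same mechanism used at the end of the proof of Lemma~\ref{mult} (the projections onto finite coordinates are product measures, and finite cylinders generate), so it can simply be invoked. The whole argument is thus a clean inductive extension of Theorem~\ref{closedonproducts}.
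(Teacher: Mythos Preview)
Your argument is correct and is exactly what the paper intends: the paper's proof consists of the single sentence ``By induction, we obtain from the above result the following corollary,'' and you have simply spelled out that induction together with the routine cylinder-approximation passage to the infinite product (which, as you note, is the same mechanism used at the end of Lemma~\ref{mult}).
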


\section{An automorphism whose self-joinings are disjoint with ergodic automorphisms}\label{sec: selfj}
We are going to construct an automorphism whose self-joinings are all elements of $Erg^{\perp}$. To do that, we will rely on the PID property. We need the following result.
\begin{theorem}[Ryzhikov 1994]\label{th:rhyz}
	Assume that $(X,\mathcal B,\mu, T)$ has the PID property and $(Y_i,\mathcal B_i,\nu_i,S_i)$, $i=1,2$, are measure preserving automorphisms. If  $\la\in J(T,S_1,S_2)$ projects as product measures on each pair of coordinates then
	\[
	\la=\mu\otimes\nu_1\otimes\nu_2.
	\]
	\end{theorem}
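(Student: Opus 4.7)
The plan is to apply the PID property of $T$ to an infinite self-joining of $T$ built from $\la$, using a doubly-indexed family of copies. First, I would consider the product space $X^{\n^2}\times Y_1^{\n}\times Y_2^{\n}$ and use Kolmogorov's extension theorem to define a measure $\Lambda$ on it under which $(Y_1^{(n)})_{n\in\n}$ is i.i.d.\ with law $\nu_1$, $(Y_2^{(m)})_{m\in\n}$ is i.i.d.\ with law $\nu_2$, these two families are independent of each other, and, conditionally on all the $Y$'s, the variables $(X^{(n,m)})_{(n,m)\in\n^2}$ are independent with $X^{(n,m)}$ distributed as the conditional law of $X$ given $(Y_1,Y_2)=(y_1^{(n)},y_2^{(m)})$ under $\la$. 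By construction each triple $(X^{(n,m)},Y_1^{(n)},Y_2^{(m)})$ has law $\la$, so $\Lambda$ is invariant under the componentwise action of $T$, $S_1$ and $S_2$.

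Next, I would compute the pairwise $X$-marginals of $\Lambda$. For $(n,m)\neq(n',m')$ there are three cases according to whether $n=n'$, $m=m'$, or neither; in each case, integrating out the shared $Y$'s and invoking the pairwise independences $\la|_{X\times Y_i}=\mu\otimes\nu_i$ for $i=1,2$ gives $\Lambda|_{X^{(n,m)}\otimes X^{(n',m')}}=\mu\otimes\mu$. Re-indexing $\n^2\cong\n$, the marginal $\Lambda|_{X^{\n^2}}$ is an element of $J_\infty(T)$ all of whose projections on pairs of coordinates are $\mu\otimes\mu$. The PID property of $T$ then yields $\Lambda|_{X^{\n^2}}=\mu^{\otimes\n^2}$, i.e.\ the $(X^{(n,m)})$ are jointly i.i.d.\ with law $\mu$.

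Finally, I would combine this joint independence with the conditional structure to force $\la$ to be a product. Fix $\phi\in L^\infty(X,\mu)$ with $\int\phi\,d\mu=0$ and set $\Psi(y_1,y_2):=\mathbb{E}_\la[\phi(X)\mid Y_1=y_1,Y_2=y_2]$. Evaluating
\[
\mathbb{E}_\Lambda\!\left[\phi(X^{(1,1)})\phi(X^{(1,2)})\phi(X^{(2,1)})\phi(X^{(2,2)})\right]
\]
via the joint i.i.d.\ property just obtained gives $0$, while evaluating it via conditional independence of the $X^{(n,m)}$ given the $Y$'s and Fubini yields
\[
\int\!\!\int\left(\int\Psi(y_1,y_2)\Psi(y_1',y_2)\,d\nu_2(y_2)\right)^{\!2}d\nu_1(y_1)\,d\nu_1(y_1').
\]
Vanishing of this nonnegative integral forces $\Psi\equiv 0$ $\nu_1\otimes\nu_2$-a.e., so $X$ is independent of $(Y_1,Y_2)$ under $\la$; together with $\la|_{Y_1\times Y_2}=\nu_1\otimes\nu_2$ this gives $\la=\mu\otimes\nu_1\otimes\nu_2$.

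The main technical point is the construction and the pairwise calculation: the doubly-indexed design is what lets both $X\perp Y_1$ and $X\perp Y_2$ be invoked \emph{symmetrically} to produce $\mu\otimes\mu$ in every pair of $X$-coordinates, which is precisely the input PID requires. Once that is in place, PID does the heavy lifting and the closing $L^2$-identity is routine.
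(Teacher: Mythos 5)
The paper itself does not prove this statement: it is quoted as an external theorem of Ryzhikov (1994) and used as a black box, so there is no internal argument to compare yours against. Judged on its own, your proof is essentially correct and self-contained: the doubly-indexed relatively independent joining over the $Y$-variables is exactly the right device, the three pairwise computations (shared $Y_1$, shared $Y_2$, nothing shared) do yield $\mu\otimes\mu$ using precisely the hypotheses $\la|_{X\times Y_i}=\mu\otimes\nu_i$, and the PID property then makes the $X$-array independent. Two steps should be made explicit. First, invariance of $\Lambda$ does not follow merely from each triple $(X^{(n,m)},Y_1^{(n)},Y_2^{(m)})$ having law $\la$; it follows from the equivariance of the disintegration $\la=\int\la_{y_1,y_2}\,d(\nu_1\otimes\nu_2)(y_1,y_2)$, namely $T_*\la_{y_1,y_2}=\la_{S_1y_1,S_2y_2}$ a.e., which is the standard fact underlying relatively independent joinings (cf.\ Lemma \ref{lem: factorjoin}). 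Second, in the closing computation the vanishing of the fourth moment only gives that the Gram kernel $K(y_1,y_1'):=\int\Psi(y_1,y_2)\Psi(y_1',y_2)\,d\nu_2(y_2)$ is $0$ for $\nu_1\otimes\nu_1$-a.e.\ $(y_1,y_1')$; since the diagonal may be a null set, this does not by itself give $\int\Psi(y_1,\cdot)^2\,d\nu_2=0$. One more line closes it: taking $\phi$ real and expanding $\Psi(y_1,y_2)=\sum_j c_j(y_1)e_j(y_2)$ in an orthonormal basis of $L^2(\nu_2)$, the four-point quantity equals $\sum_{j,k}\bigl(\int c_jc_k\,d\nu_1\bigr)^2\ge\sum_j\|c_j\|_{L^2(\nu_1)}^4$, so all $c_j=0$ and hence $\Psi=0$ a.e.\ (equivalently: $K=0$ means the operator $AA^*$ vanishes, where $A$ is the kernel operator of $\Psi$, so $A=0$). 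With these two points spelled out, your argument is a complete proof of the cited theorem, which the paper itself only references.
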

In the proof of Theorem \ref{thm: self-join}, we will use several times the following corollary of the above result.
\begin{corollary}\label{cor: multiPID}
Let $n\ge 2$ and let  $(X_i,\mathcal B_i,\mu_i,T_i)$, $i=1,\ldots,n$, be measure preserving automorphisms satisfying the PID property. Let $(Y,\mathcal C,\nu,S)$ be a measure preserving automorphism. If  $\la\in J(S,T_1,\ldots,T_n)$ projects as product measures on each pair of coordinates then
\[
\la=\nu\otimes\mu_1\otimes\ldots\otimes\mu_n.
\]
\end{corollary}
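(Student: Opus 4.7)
The plan is to induct on $n$, with each inductive step reducing to a single application of Ryzhikov's Theorem~\ref{th:rhyz} on three grouped coordinates. The base case $n=2$ is exactly Theorem~\ref{th:rhyz} (choose $T := T_1$, $S_1 := S$, $S_2 := T_2$, using that $T_1$ has the PID property).

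For the inductive step I would assume the corollary for all values from $2$ up to $n$ (with $n \geq 2$), and take $\la \in J(S, T_1, \dots, T_{n+1})$ with pair-wise product marginals. The idea is to bundle the ``other'' coordinates into a single auxiliary product system
\[
(\widetilde Y, \widetilde S) := (Y \times X_1 \times \dots \times X_{n-1},\ S \times T_1 \times \dots \times T_{n-1}),
\]
carrying the projected measure $\widetilde\nu := (\pi^{Y, X_1, \dots, X_{n-1}})_\ast \la$, and then view $\la$ as an element of $J(T_n, T_{n+1}, \widetilde S)$. I plan to apply Ryzhikov's theorem with the PID system $T := T_n$.

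The three pair-wise product hypotheses of Ryzhikov in this regrouping will be checked by three appeals to the inductive hypothesis together with one pair-wise assumption. First, $\widetilde\nu = \nu \otimes \mu_1 \otimes \dots \otimes \mu_{n-1}$: the projection $\la|_{Y, X_1, \dots, X_{n-1}}$ inherits pair-wise products from $\la$, so the inductive hypothesis at step $n-1$ applies (the case $n-1 = 1$ being trivial). Second, $\la|_{T_n, \widetilde S} = \mu_n \otimes \widetilde\nu$ follows from the inductive hypothesis at step $n$ applied to the joining $\la|_{Y, X_1, \dots, X_n}$ of $S$ with the $n$ PID systems $T_1,\dots,T_n$. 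Third, $\la|_{T_{n+1}, \widetilde S} = \mu_{n+1} \otimes \widetilde\nu$ follows by the same argument after relabelling $T_{n+1}$ as the ``last'' PID coordinate. Fourth, $\la|_{T_n, T_{n+1}} = \mu_n \otimes \mu_{n+1}$ by assumption. Theorem~\ref{th:rhyz} will then yield
\[
\la = \mu_n \otimes \mu_{n+1} \otimes \widetilde\nu = \nu \otimes \mu_1 \otimes \dots \otimes \mu_{n+1},
\]
closing the induction.

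The only real delicacy is verifying that $\widetilde S$ is genuinely the \emph{product} system, since Ryzhikov's theorem needs honest measure-preserving systems as its three coordinates; this is exactly why the strong inductive hypothesis is called upon at both steps $n-1$ and $n$, rather than at $n$ alone. Once the marginal $\widetilde\nu$ is pinned down as a product, the rest of the verification is routine bookkeeping about projections.
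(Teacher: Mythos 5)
Your proposal is correct and takes essentially the same route as the paper: induction on $n$ with a single application of Theorem~\ref{th:rhyz} at each step after bundling coordinates, the only difference being the grouping (the paper takes the PID coordinate $T:=T_{n+1}$, $S_1:=S$ and $S_2:=T_1\times\cdots\times T_n$ with the projected measure, so it needs the inductive hypothesis only at step $n$, whereas you bundle $S\times T_1\times\cdots\times T_{n-1}$ and use $T_n$ as the PID coordinate). Your closing ``delicacy'' is a slight misstatement but harmless: Theorem~\ref{th:rhyz} only requires $\widetilde S$ to carry the (automatically invariant) projected measure $\widetilde\nu$, the product form of $\widetilde\nu$ being needed just for the pairwise-product hypotheses and the final identification of $\la$, both of which you do verify.
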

\begin{proof}
	We use induction. For $n=2$, the result follows directly from Theorem \ref{th:rhyz}. Assume that it is true for some $n\ge 2$, we will prove that it holds for $n+1$. Let $\la\in J(S,T_1,\ldots,T_{n+1})$ and let $\tilde \la$ be the projection of $\la$ on $Y\times X_1\times\ldots\times X_n$ and let $\bar\la$ be the projection of $\la$ on $X_1\times\ldots\times X_{n+1}$. By the induction assumption, we get that
	\[
	\tilde\la=\nu\otimes\mu_1\otimes\ldots\otimes\mu_{n}\ \text{ and }\ \bar\la=\mu_1\otimes\ldots\otimes\mu_{n+1}.
	\]
	The result now follows from Theorem \ref{th:rhyz} by taking $T:=T_{n+1}$, $S_1:=S$ and $S_2:=T_1\times\ldots\times T_n$.

	\end{proof}
We are now ready to present and prove the main result of this section.
\begin{theorem}\label{thm: self-join}
	Let $T:X\to X$ be a $\mu$-preserving homeomorphism of a compact metric space $X$. Let
	\[
	\mu=\int_{\overline{X}}\mu_{\bar x}\ dP(\bar x)
	\]
	be the ergodic decomposition and let $T_{\bar x}$ denote the action of $T$ on the fiber corresponding to $\bar x\in \overline{X}$. Assume that $P$ is a continuous probability measure. Assume moreover that:
	\begin{itemize}
		\item for every $\bar x\in \overline{X}$ there exists a set of $\bar y\in \overline{X}$, whose complement is countable, such that for every $\bar y$ from this set, we have $T_{\bar x}\perp T_{\bar y}$;
		\item for every $\bar x,\bar y\in \overline{X}$ if $T_{\bar x}\notperp T_{\bar y}$  then $T_{\bar x}$ and $T_{\bar y}$ are isomorphic;
		\item for every $\bar x\in \overline{X}$ the automorphism $(T_{\bar x},\mu_{\bar x})$ has the MSJ property (in particular, it has the PID property).
	\end{itemize}
Then $(T^{\times \infty},\eta)\in Erg^{\perp}$ for every 
$\eta\in J_\infty(T,\mu)$.
\end{theorem}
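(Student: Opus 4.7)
The plan is to show that for every ergodic $(Z,\mathcal D,\rho,R)$ and every joining $\lambda\in J((T^{\times\infty},\eta),(R,\rho))$ one has $\lambda=\eta\otimes\rho$. The strategy is to disintegrate $\lambda$ over the ergodic decomposition of $\eta$, realise each ergodic component of $\eta$ as a countable product of pairwise disjoint MSJ systems, and then apply Corollary~\ref{cor: multiPID} to finite projections before passing to the limit through cylinders.

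First I note that $T\in Erg^\perp$: continuity of $P$ combined with the first hypothesis and Fubini gives $T_{\bar x}\perp T_{\bar y}$ for $P\otimes P$-a.e.\ $(\bar x,\bar y)$, so Theorem~\ref{classification} applies, and Proposition~\ref{ergdisj} yields $T_{\bar x}\perp R$ for $P$-a.e.\ $\bar x$. Let $\bar\eta$ be the push-forward of $\eta$ on $\overline X^\infty$ and write $\eta=\int\eta_{\bar{\mathbf x}}\, d\bar\eta(\bar{\mathbf x})$. For $\bar\eta$-a.e.\ $\bar{\mathbf x}=(\bar x_1,\bar x_2,\ldots)$ I partition $\mathbb N$ by the equivalence $i\sim j\iff T_{\bar x_i}\cong T_{\bar x_j}$, which by the second hypothesis coincides with $T_{\bar x_i}\notperp T_{\bar x_j}$. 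Different classes contribute independent factors to $\eta_{\bar{\mathbf x}}$ by disjointness, and within each class $\eta_{\bar{\mathbf x}}$ is a self-joining of a common MSJ system $S_k$. By MSJ, the ergodic decomposition of each such within-class self-joining splits it into products of off-diagonal self-joinings of $S_k$. Assembling these pieces, every ergodic component $\eta_\omega$ of $\eta$ comes equipped with a partition $\{B_l(\omega)\}_l$ of $\mathbb N$ such that $\eta_\omega|_{X^{B_l}}$ is a graph joining of an MSJ system $S_l(\omega)$ (isomorphic to $T_{\bar x_i}$ for every $i\in B_l$), while distinct blocks are independent under $\eta_\omega$.

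Write $\eta=\int\eta_\omega\, dQ(\omega)$ and disintegrate $\lambda=\int\lambda_\omega\, dQ(\omega)$ over the invariant $\sigma$-algebra of $(T^{\times\infty},\eta)$. Proposition~\ref{idperp} applied to the identity factor parametrising $\omega$ forces the $Z$-marginal of $\lambda_\omega$ to be $\rho$ for $Q$-a.e.\ $\omega$, so $\lambda_\omega\in J(\eta_\omega,\rho)$. Fix such $\omega$ and an arbitrary finite $F\subset\mathbb N$, and put $F_l:=F\cap B_l(\omega)$. Then $(X^{F_l},\eta_\omega|_{X^{F_l}},T^{\times F_l})$ is isomorphic to $S_l(\omega)$ through any single coordinate and hence has MSJ, in particular PID. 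Viewing $\lambda_\omega|_{Z\times X^F}$ as a finite joining of $R$ with the blocks $\{(X^{F_l},\eta_\omega|_{X^{F_l}},T^{\times F_l})\}_l$, its pair projections are products: between two different blocks by independence of blocks under $\eta_\omega$, and between $R$ and any block because that block is isomorphic to some $T_{\bar x_i}$ with $T_{\bar x_i}\perp R$ (valid for all coordinates along $\bar\eta$-a.e.\ $\bar{\mathbf x}$, since the exceptional set is a countable union of $P$-null sets and the marginals of $\bar\eta$ are $P$). Corollary~\ref{cor: multiPID} then gives $\lambda_\omega|_{Z\times X^F}=\rho\otimes\eta_\omega|_{X^F}$. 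Since finite cylinders generate $\mathcal B^{\otimes\infty}\otimes\mathcal D$, we conclude $\lambda_\omega=\rho\otimes\eta_\omega$, and integrating over $Q$ gives $\lambda=\eta\otimes\rho$.

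The main anticipated difficulty is the structural step: constructing, in a $Q$-measurable way, the block decomposition $\{B_l(\omega)\}_l$ realising each ergodic component of $\eta$ as a product of MSJ systems. This requires combining the $\overline X^\infty$-disintegration with the MSJ-driven refinement inside each isomorphism class and handling the measurability of the equivalence relation $i\sim j$; a selector theorem in the style of Theorem~\ref{th:selector} should control it, but the bookkeeping is where most of the work sits. Once this structural picture is in place, the PID application via Corollary~\ref{cor: multiPID} on finite cylinders is essentially routine.
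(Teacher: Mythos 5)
Your proposal is correct and takes essentially the same route as the paper's proof: disintegrate the joining with an ergodic $R$ over the ergodic decomposition of $\eta$, use the disjoint-or-isomorphic dichotomy together with MSJ to see each ergodic component as a product of graph joinings of pairwise disjoint PID systems, and conclude via Corollary~\ref{cor: multiPID} and the $P$-a.e.\ disjointness of the fibers from $R$. The only differences are cosmetic: the paper first reduces to finite $n$ and argues pointwise by cases on the number of isomorphism groups, so the measurable block-selection issue you anticipate never arises (only the measurability of the disjointness sets is needed, which is quoted from \cite{Au-Go-Le-Ru}).
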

\begin{proof}
	It is enough to prove that for every $n\ge 1$ and every  $\eta\in J_n(T,\mu)$, $(T^{\times n},\eta)\in Erg^\perp$. 	Note that since $P$ is continuous, for $P\otimes P$-a.e. $(\bar x, \bar y)\in \overline X\times \overline X$, we have $T_{\bar x}\perp T_{\bar y}$. Then, by Theorem \ref{classification}, $T\in Erg^\perp$.
	
	Let $2\le n<\infty$, fix $\eta\in J_n(T,\mu)$ and let $(R,\rho)\in Erg$ be 
	arbitrary. 
Let $\psi\in 
J((T^{\times n},\eta),(R,\rho))$. We want to show that $\psi=\eta\otimes 
\rho$. To distinguish between the $n$ copies of $X$ and $\overline{X}$ we 
denote by $X_k$ and $\overline X_k$ the domain and the space of ergodic 
components of $T$ on the $k$-th coordinate for every $k=1,\ldots n$. We assign 
the  coordinate $(n+1)$ to the automorphism $R$. Finally, we set
$\mathbf X=X_1\times\ldots\times X_n$ and by 
$\pi^{k_1,\ldots,k_m}:\mathbf X\to X_{k_1}\times\ldots \times X_{k_m}$ we denote
the 
projection on $k_1,\ldots,k_m$ coordinates.

Consider the ergodic decomposition of $(T^{\times n},\eta)$:
\[
\eta=\int_{\overline{\mathbf X}}\eta_{\bar {\mathbf x}}\ dQ(\bar {\mathbf 
x}).
\]
Thus we also have the following disintegration of $\psi$:
\[
\psi=\int_{\overline{\mathbf X}}\psi_{\bar {\mathbf x}}\ dQ(\bar {\mathbf 
	x}).
\]
By uniqueness of ergodic decomposition we have that $(\pi^{\mathbf X})_*\psi_{\bar{\mathbf x}}=\eta_{\bar{\mathbf x}}$ and by ergodicity of $R$ we get $(\pi^{n+1})_*\psi_{\bar{\mathbf x}}=\rho$. Thus  $\psi_{\bar{\mathbf x}}\in 
J((T^{\times n},\eta_{\bar{\mathbf x}}), (R,\rho))$ for $Q$-a.e. $\bar{\mathbf 
x}$. To 
show that $\psi=\eta \otimes \rho$, we just have to show that 
\begin{equation}\label{eq: prodoncoord}
\psi_{\bar {\mathbf x}}=\eta_{\bar{\mathbf x}}\otimes\rho\quad\text{for 
$Q$-a.e. }\bar{\mathbf x}\in \overline{\mathbf X}.
\end{equation}

Note that by uniqueness of ergodic decomposition, for $Q$-a.e. 
$\bar{\mathbf x}\in \overline{\mathbf X}$ and for every $k=1,\ldots,n$, we have 
\[
(\pi^k)_*\psi_{\bar {\mathbf x}}=(\pi^k)_*\eta_{\bar {\mathbf x}}=\mu_{\bar 
x_k}\quad\text{for some }\bar x_k(\bar{\mathbf x})=\bar x_k\in \overline{X}_k.
\] 
Moreover, all marginals of $Q$ are equal to $P$.
Therefore, since $P$ is continuous, we get
that for every $\bar x\in \overline X$ we have 
\begin{equation}\label{eq: noprevalentcomponent}
Q\left\{\bar{\mathbf x}\in \overline{\mathbf X}:\ 
\mu_{\bar x_k}=\mu_{\bar x} \text{ for some }k=1,\ldots,n  \right\}=0.
\end{equation}
In particular, 
since there are only up to countably many ergodic components of $T$ isomorphic to $\mu_{\bar x}$, 
we get by \eqref{eq: noprevalentcomponent} that for every $\bar 
x\in\overline{X}$
\begin{equation}\label{eq: noprevalentclass}
	Q\left(\left\{\bar{\mathbf x}\in \overline{\mathbf X}:\ (T,\mu_{\bar x_k})\text{ 
	is isomorphic to }(T,\mu_{\bar x})  \text{ for some  
			}k=1,\ldots,n  \right\}\right)=0.
\end{equation}

Now, to show \eqref{eq: prodoncoord}, we consider cases which depend on the 
form of $\eta_{\bar{\mathbf x}}$. More precisely, we consider the number of 
coordinates on which the projection yields isomorphic maps (recall that by 
assumption, the ergodic components are either isomorphic or disjoint).

\vspace{5mm}
\noindent\textbf{Case 1: No isomorphic components.} Let $\mathcal D_0\subset \overline{\mathbf X}$ be the set of elements 
satisfying the following
\begin{equation}\label{eq: pairwisedisjoint}
(T,\mu_{\bar x_1}),\ldots,(T,\mu_{\bar x_n}) 
\text{ are pairwise disjoint}.
\end{equation}
Then, by Corollary \ref{cor: multiPID} and mutual disjointness assumption, for 
every $\bar{\mathbf x}\in \mathcal D_0$ we get 
\begin{equation}\label{eq: product}
\eta_{\bar {\mathbf x}}=\mu_{\bar x_1}\otimes \ldots\otimes\mu_{\bar x_n}.
\end{equation}
Note also that for $Q$-a.e. $\bar{\mathbf x}\in \mathcal D_0$, the automorphism 
$(R,\rho)$ is disjoint 
with $(T,\mu_{\bar x_k})$ for every $k=1,\ldots, n$. Indeed, otherwise for some $1\le 
k_0\le n$ we would have that \footnote{The measurability of sets considered below follows from Section 3.4 in \cite{Au-Go-Le-Ru}.}
\[\begin{split}
	Q&\left(\left\{\bar{\mathbf x}\in \overline{\mathbf X};\ (T,\mu_{\bar 
	x_{k_0}})\text{ and }(R,\rho) \text{ are not disjoint } \right\}\right)\\&=
	P\left(\left\{\bar{x}\in \overline{X};\ (T,\mu_{\bar x})\text{ and 
	}(R,\rho) \text{ are not disjoint } \right\}\right)>0.
\end{split}
\]
By Proposition \ref{disjuptocount} and the assumptions of the theorem, the set 
considered above can be at most countable. This is a contradiction with 
the fact that $P$ is continuous.

Thus, for $Q$-a.e. $\bar{\mathbf x}\in \mathcal D_0$, the automorphism $(R,\rho)$ is 
disjoint with $(T,\mu_{\bar x_k})$ for every $k=1,\ldots, n$. In particular, by 
\eqref{eq: product}, for every such $\bar{\mathbf x}$, the measure 
$\psi_{\bar{\mathbf x}}$ projects on each pair of coordinates as a product 
measure. By Corollary \ref{cor: multiPID}, we get that 
\[
\psi_{\bar{\mathbf x}}=\eta_{\bar{\mathbf x}}\otimes \rho,
\]
for $Q$-a.e. $\bar{\mathbf x}\in \mathcal D_0$. This finishes the proof of Case 
1.  

\vspace{5mm}
\noindent\textbf{Case 2: There exist isomorphic components.} Consider now the 
elements $\bar{\mathbf x}\in\overline{\mathbf X}$ such that, up to a permutation of coordinates, there exist $m<n$ 
and indices $0=\ell_0< \ell_1<\ldots<\ell_{m-1}<\ell_m=n$ such that 
$(T,\mu_{\bar x_{\ell_{k-1}+1}}),\ldots, (T,\mu_{\bar x_{\ell_k}})$ are isomorphic for every 
$k=1,\ldots, m$ and $m$ is minimal in this representation. For every $m=1,\ldots,n-1$, denote by $\mathcal D_m\subset 
\overline{\mathbf X}$, the set of elements for which there are exactly $m$ 
groups of indices in the above decomposition. We will show that this case 
reduces to the Case 1.

Fix $m=1,\ldots,n-1$ and let $\bar{\mathbf x}\in \mathcal D_m$. Let 
$\ell_0,\ldots,\ell_m$ be as above. Since $\eta_{\bar{\mathbf x}}$ is ergodic, 
for every $k=1,\ldots,m$, the projection 
$(\pi^{\ell_{k-1}+1,\ldots,\ell_{k}})_*\eta_{\bar{\mathbf x}}$ is an ergodic 
measure for the map $T^{\times \ell_{k}-\ell_{k-1}}$ (up to an isomorphism it is a self-joining of $(T,\mu_{\bar x_{\ell_k}})$). Moreover, recalling that $(T,\mu_{\bar x})$ has the MSJ property for every $\bar 
x\in \overline X$, we obtain that
	\[
(\pi^{\ell_{k-1}+1,\ldots,\ell_{k}})_*\eta_{\bar{\mathbf x}} \text{ is a product 
of $r_k\le \ell_{k}-\ell_{k-1}$ off-diagonal self-joinings.}
	\]
Thus, the measure $\eta_{\bar{\mathbf x}}$ is actually a joining of 
$r_1+\ldots+r_m$ automorphisms, such that any two of them are either disjoint or isomorphic and in the latter case $\eta_{\bar{\mathbf x}}$ projects on the corresponding coordinates as the product measure.  Hence, by using Corollary \ref{cor: multiPID}, we get that $\eta_{\bar{\mathbf x}}$ 
is a product joining of $r_1+\ldots+r_m$ ergodic components of $T$. In other words, we reduced the problem, where \eqref{eq: product} is satisfied for a smaller number of indices.  The 
remainder of the proof of Case 2. is analogous to the proof of Case 1. by taking $n:=r_1+\ldots+r_m$.

\vspace{5mm}

\end{proof}
Recalling the definition of a characteristic class and  $\mathcal F(T)$ from Subsection \ref{subsec:charclass}, we get the following result.
\begin{corollary}\label{cor: noergcharclass}
	If $T$  satisfies the assumptions of Theorem \ref{thm: 
	self-join}, then $\mathcal F(T)$ is a characteristic class {of elements from $Erg^\perp$.}
\end{corollary}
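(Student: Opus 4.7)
The plan is to combine two facts: Lemma \ref{lem: gencharclass}, which already gives that $\mathcal F(T)$ is characteristic (closed under countable joinings and factors), and Theorem \ref{thm: self-join}, which tells us that all countable self-joinings of $T$ sit inside $Erg^\perp$. What remains is the observation that taking factors does not take us out of $Erg^\perp$.

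First I would invoke Lemma \ref{lem: gencharclass} to conclude, with no further work, that $\mathcal F(T)$ is a characteristic class. Next, to show $\mathcal F(T)\subseteq Erg^\perp$, I would verify the claim on the two types of elements of $\mathcal F(T)$:
\begin{itemize}
\item For a countable self-joining $\eta\in J_\infty(T)$, the system $(X^{\times \infty},\eta, T^{\times\infty})$ lies in $Erg^\perp$ directly by Theorem \ref{thm: self-join}. Finite self-joinings are handled by the same theorem (or, equivalently, arise as factors of infinite ones via coordinate projections).
\item For a factor $(Y,\mathcal C,\nu,S)$ of some $(X^{\times\infty},\eta,T^{\times\infty})\in Erg^\perp$, I would argue that $Erg^\perp$ itself is closed under taking factors. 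Indeed, given an ergodic $(Z,\mathcal D,\rho,R)$ and a joining $\lambda\in J(S,R)$, the relatively independent extension $\hat\lambda\in J(T^{\times\infty},R)$ (defined in Subsection~2.2) is a joining of $T^{\times\infty}$ with the ergodic $R$, hence equals the product $\eta\otimes\rho$ by assumption; pushing forward to $Y\times Z$ via the factor map yields $\lambda=\nu\otimes\rho$, so $S\in Erg^\perp$.
\end{itemize}

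Combining these two bullets, every element of $\mathcal F(T)$ belongs to $Erg^\perp$, and together with the characteristic property from Lemma \ref{lem: gencharclass} this gives the corollary. The only genuine content that needs checking is the closure of $Erg^\perp$ under factors, and this is an entirely routine use of the relatively independent extension already introduced in Section 2, so I do not anticipate any obstacle; the substantive work has already been done inside Theorem \ref{thm: self-join}.
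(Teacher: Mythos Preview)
Your proposal is correct and follows essentially the same approach as the paper's own proof, which simply cites Lemma~\ref{lem: gencharclass}, Theorem~\ref{thm: self-join}, and the fact that $Erg^\perp$ is closed under taking factors. You have additionally spelled out the factor-closure step via the relatively independent extension, which is exactly the standard argument the paper leaves implicit (and uses elsewhere, e.g.\ in the proof of Corollary~\ref{cor:multclass}).
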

\begin{proof}
	By Lemma \ref{lem: gencharclass}, Theorem \ref{thm: self-join} and the fact that 
	any factor of an element of $Erg^\perp$ is also a member of this class.
\end{proof}

We now construct an example to show that the set of automorphisms satisfying the assumptions of Theorem \ref{thm: self-join} is non-empty. We build it by using the classical cutting and stacking construction of systems of rank 1. 
We have the following fact, which is a special case of the results by Gao and Hill \cite{Gao-Hill} and Danilenko \cite{Dan}.
\begin{proposition}\label{rank1disj}
	Let $\bar a=\{a_n\}_{n\in\n}\in \{0,1\}^{\mathbb N}$ be the binary expansion of $a\in [0,1]$. Let $T_{a}:[0,1]\to[0,1]$ be a rank 1 automorphism defined in the following way, via cutting and stacking procedure:
	\begin{itemize}
		\item on each step the cutting parameter is equal to 3;
		\item on step $n$ we add a spacer over the first tower if $a_n=0$ and over the second if $a_n=1$.
			\end{itemize}
Assume that $|a-b|\neq \frac{k}{2^l}$ for any $k,l\in\n$. Then $T_a $ and $T_b$ are disjoint. Otherwise, they are isomorphic.
\end{proposition}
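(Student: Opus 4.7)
The plan is to reduce this statement to the classification of rank-one transformations via their cut-and-spacer parameters, as developed in \cite{Gao-Hill} and \cite{Dan}. In those papers, two rank-one systems sharing the same constant cut sequence are shown to be either isomorphic (when their spacer sequences eventually coincide) or disjoint (when they differ infinitely often). The construction here is of precisely this form: the cut parameter is $r_n = 3$ at every stage, and the spacer vector at stage $n$ is $(1,0,0)$ or $(0,1,0)$ according to whether $a_n = 0$ or $a_n = 1$, so that the spacer pattern is in bijection with the binary sequence $\bar a$.

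First I would establish the arithmetic observation that, after fixing the canonical non-terminating-in-$1$ binary representative, $|a-b|$ is a dyadic rational $k/2^l$ if and only if $\bar a$ and $\bar b$ agree from some index $N$ onwards. This is a standard fact about dyadic expansions (the difference of two digit sequences is eventually zero iff the difference of the numbers is dyadic). Consequently, the arithmetic hypothesis of the proposition translates into the combinatorial statement that the spacer sequences of $T_a$ and $T_b$ differ at infinitely many stages.

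In the ``eventually agree'' case, a direct inductive matching of the two stage-$N$ towers, which undergo identical cuts and spacer additions from stage $N$ onwards, gives a measure-preserving conjugacy between $T_a$ and $T_b$: both towers at any given stage share the same total height and total measure (each stage adds exactly one spacer, regardless of the column chosen), and the identical subsequent operations propagate this conjugacy to the inverse limit. In the ``differ infinitely often'' case, I would invoke the disjointness criterion from \cite{Gao-Hill, Dan} to conclude $T_a \perp T_b$. The main obstacle is verifying that the pattern of disagreement meets the hypotheses of the cited disjointness theorem; however, since the two admissible spacer vectors $(1,0,0)$ and $(0,1,0)$ place the lone spacer in \emph{distinct} columns, every index $n$ with $a_n \neq b_n$ produces a detectable, column-level combinatorial difference at scale $n$, which is exactly the genericity condition required by the disjointness results in \cite{Gao-Hill, Dan}. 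This completes the reduction and establishes both directions of the dichotomy.
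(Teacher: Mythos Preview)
Your proposal is correct and matches the paper's approach exactly: the paper does not give a proof of this proposition at all but simply states it as ``a special case of the results by Gao and Hill \cite{Gao-Hill} and Danilenko \cite{Dan}.'' You have supplied more detail than the paper does, namely the dyadic-difference $\Leftrightarrow$ eventually-equal-tails translation and the direct tower-matching for the isomorphism direction, but the core strategy (reduce to the cited classification of bounded rank-one systems by their spacer parameters) is identical.
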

By \cite{dJRS}, the systems considered in the above proposition are ergodic and satisfy the MSJ property.
Note that the sequence heights of towers in the cutting and stacking 
construction described in the above theorem is universal for all $a\in[0,1]$. 
We leave the proof of the following easy lemma to the reader.
\begin{lemma}\label{rank1cont}
	The map $[0,1]\ni a\mapsto T_a\in \operatorname{Aut}(X,\nu)$ is well defined and continuous in $[0,1]\setminus \mathbb Q$.
	\end{lemma}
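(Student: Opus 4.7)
The assignment $a\mapsto T_a$ is well defined as soon as we fix once and for all a convention for the binary expansion of dyadic rationals (say, the terminating one); with this convention $a\mapsto\bar a=(a_n)_{n\in\mathbb N}$ is a bona fide function $[0,1]\to\{0,1\}^{\mathbb N}$, so each $T_a$ is unambiguously determined by the cutting-and-stacking recipe in Proposition \ref{rank1disj}. The crucial structural observation is that the cutting parameter ($=3$), the sequence of tower heights $(h_n)$ (obeying the universal recursion $h_0=1$, $h_{n+1}=3h_n+1$), and the sequence of base widths $w_n=w_0/3^n$ are all independent of $a$. Consequently the ambient measure space $(X,\nu)$, viewed as the disjoint union of the base interval with the countable family of unit-height spacer intervals introduced at each step, depends only on this universal data, and for any pair $a,a'\in[0,1]$ sharing their first $N$ binary digits the step-$N$ Rokhlin towers for $T_a$ and $T_{a'}$ coincide as labelled subsets of $X$; on the first $h_N-1$ levels of this common tower both maps act as the shift to the next level, so $T_a=T_{a'}$ there.

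For continuity at an irrational $a\in[0,1]$, the key analytic input is that an irrational has a unique, not eventually constant binary expansion, so the map $a\mapsto(a_1,\ldots,a_N)$ is locally constant at every irrational point. Thus for every $N\in\mathbb N$ there exists $\varepsilon_N>0$ such that any $a'\in[0,1]\setminus\mathbb Q$ with $|a'-a|<\varepsilon_N$ has the same first $N$ binary digits as $a$. Combining this with the previous paragraph, whenever $a_k\to a$ in $[0,1]\setminus\mathbb Q$ and $N$ is fixed, for all sufficiently large $k$ the transformations $T_{a_k}$ and $T_a$ agree on a set of $\nu$-measure at least $(h_N-1)w_N$. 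Letting $N\to\infty$ and using $(h_N-1)w_N\to\nu(X)$, we deduce $\nu(\{x:T_{a_k}(x)\neq T_a(x)\})\to 0$, which implies $T_{a_k}\to T_a$ in the weak topology of $\operatorname{Aut}(X,\nu)$, since for any measurable $A\subset X$ the symmetric difference $T_{a_k}^{-1}A\triangle T_a^{-1}A$ is contained in $\{T_{a_k}\neq T_a\}$.

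The only mildly subtle point is the assertion that the step-$N$ towers of $T_a$ and $T_{a'}$ are literally the same subset of $X$: this requires recording the rank-one construction in a concrete interval-based model (base together with a prescribed deterministic sequence of spacer intervals) so that both the underlying space $X$ and the step-$N$ tower are determined by the universal data $(h_n,w_n)_{n\le N}$ rather than by the choice of $a$; the digits $a_1,\ldots,a_N$ only dictate the order in which these predetermined intervals are stacked. Once this bookkeeping is accepted, the estimate $\nu(\{T_{a_k}\neq T_a\})\le\nu(X)-(h_N-1)w_N$ is immediate and the rest of the argument is routine.
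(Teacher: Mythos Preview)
The paper itself omits the proof, declaring the lemma easy and leaving it to the reader. Your argument is correct and is exactly the intended elementary verification: since the cutting parameter, the tower heights $h_n$, and the spacer sizes are universal, the ambient space and the step-$N$ tower depend only on $a_1,\ldots,a_N$; the local constancy of $a\mapsto(a_1,\ldots,a_N)$ at irrationals then gives $\nu(\{T_{a'}\ne T_a\})\le\nu(X)-(h_N-1)w_N\to 0$, which in fact yields convergence in the uniform topology on $\operatorname{Aut}(X,\nu)$, stronger than the weak convergence claimed.
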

Let us consider $S\in \operatorname{Aut}([0,1]\times X, Leb_{[0,1]}\otimes \nu)$ defined in the following way
\[
S(a,x):=(a,T_a x).
\]
Then in view of Proposition \ref{rank1disj} and Lemma \ref{rank1cont}, by previous remarks, $S$ satisfies the assumptions of Theorem \ref{thm: self-join}. In particular,  we have $S\in Erg^{\perp}$.

{
	\section{A remark on multipliers}
We will now show some connections between the multipliers of a class $\mathcal{A}^\perp$ and  the characteristic classes included in $\mathcal{A}^\perp$. We recall that $\mathcal M(\mathcal A^{\perp})$ is always a characteristic class (see  Corollary \ref{cor:multclass}).
\begin{proposition}\label{prop: product}
For any $T$ and $R$, we have that
$T\in\mathcal{M}(\{R\}^\perp)$ provided that  for each $\lambda\in 
J_2(T)$, we have $(T\times T,\lambda)\perp R\times R$, where $R\times R$ is considered with product measure.
\end{proposition}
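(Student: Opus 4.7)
The plan is first to deduce $T\in\{R\}^\perp$ from the hypothesis, and then to upgrade this to the multiplier property via a self-joining construction of $T$ built out of the given data. Write $\mu,\nu,\rho$ for the invariant measures of $T,S,R$ respectively. Applying the hypothesis to $\lambda=\mu\otimes\mu$, for any $\alpha\in J(T,R)$ the product $\alpha\otimes\alpha$ belongs to $J((T\times T,\mu\otimes\mu),(R\times R,\rho\otimes\rho))$, hence equals $\mu\otimes\mu\otimes\rho\otimes\rho$; projecting gives $\alpha=\mu\otimes\rho$, so $T\perp R$.

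For the multiplier property, fix $S\in\{R\}^\perp$, take $\theta\in J(T,S)$ and an arbitrary $\kappa\in J((T\vee S,\theta),R)$; the task is to show $\kappa=\theta\otimes\rho$. Note $\kappa|_{X\times Y}=\theta$, $\kappa|_{Y\times Z}=\nu\otimes\rho$ (using $S\perp R$), and $\kappa|_{X\times Z}=\mu\otimes\rho$ (from the step above). Disintegrate $\kappa=\int_Y\kappa_y\,d\nu(y)$, where $\kappa_y$ is a probability on $X\times Z$; the $T\times S\times R$-invariance of $\kappa$ translates into the equivariance $(T\times R)_*\kappa_y=\kappa_{Sy}$ for $\nu$-a.e.\ $y$, and $\kappa|_{Y\times Z}=\nu\otimes\rho$ gives $\kappa_y|_Z=\rho$ $\nu$-a.e.

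The key construction is the relatively independent extension of $\kappa$ with itself over $Y$,
\[
\tilde\kappa:=\int_Y\kappa_y\otimes\kappa_y\,d\nu(y),
\]
a measure on $X_1\times Z_1\times Y\times X_2\times Z_2$ which, by the equivariance above, is invariant under $(T\times R)\times S\times(T\times R)$. Its $(X_1,X_2)$-marginal is $\lambda:=\int\theta_y\otimes\theta_y\,d\nu(y)\in J_2(T)$ (where $\theta_y:=\kappa_y|_X$, coming from the disintegration of $\theta$ over $Y$), and its $(Z_1,Z_2)$-marginal is $\int\rho\otimes\rho\,d\nu(y)=\rho\otimes\rho$. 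Therefore $\tilde\kappa|_{X_1X_2Z_1Z_2}\in J((T\times T,\lambda),(R\times R,\rho\otimes\rho))$, and the hypothesis forces
\[
\tilde\kappa|_{X_1X_2Z_1Z_2}=\lambda\otimes\rho\otimes\rho.
\]

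The conclusion follows from a short $L^2$ argument. For bounded real $f\in L^2(X,\mu)$ and $h\in L^2_0(Z,\rho)$, set $A(y):=\int f(x)h(z)\,d\kappa_y(x,z)$. Then on the one hand, by definition of $\tilde\kappa$,
\[
\int f(x_1)h(z_1)f(x_2)h(z_2)\,d\tilde\kappa=\int_Y A(y)^2\,d\nu(y),
\]
while the product form of the $(X_1X_2Z_1Z_2)$-marginal gives
\[
\int f(x_1)h(z_1)f(x_2)h(z_2)\,d\tilde\kappa=\left(\int f\otimes f\,d\lambda\right)\left(\int h\,d\rho\right)^2=0.
\]
Hence $A=0$ $\nu$-a.e., so for any $g\in L^\infty(Y,\nu)$, $\int f\otimes g\otimes h\,d\kappa=\int g(y)A(y)\,d\nu(y)=0=\int fg\,d\theta\cdot\int h\,d\rho$. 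Decomposing an arbitrary $h$ as its $\rho$-mean plus a zero-mean remainder, and using density of product functions, one concludes $\kappa=\theta\otimes\rho$, which is exactly $(T\vee S,\theta)\perp R$.

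The main creative hurdle is recognising that the hypothesis on $J_2(T)$ can be activated by constructing the self-joining $\tilde\kappa$ of $\kappa$ over the factor $Y$: only this construction simultaneously produces an element of $J_2(T)$ on the $X$-coordinates and a decoupled product on the $Z$-coordinates (the latter via $S\perp R$). The remaining verifications (equivariance of the disintegration, identification of the two marginals, and the final $L^2$ identity) are standard.
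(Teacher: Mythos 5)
Your proof is correct and is essentially the argument the paper relies on: the paper proves this proposition by referring to Proposition 5.1 of \cite{Le-Pa}, whose proof is exactly this scheme — deduce $T\perp R$ from the hypothesis with $\lambda=\mu\otimes\mu$, then form the relatively independent product of $\kappa$ with itself over the $Y$-coordinate, use $S\perp R$ to see the $(Z_1,Z_2)$-marginal is $\rho\otimes\rho$, apply the hypothesis to the relative square of $\theta$ over $Y$ (which lies in $J_2(T)$, no ergodicity of $T$ needed), and finish with the $L^2$ computation showing $\int f\otimes g\otimes h\,d\kappa=0$ for $h$ of zero mean. So the proposal matches the paper's (cited) proof; no gaps.
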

The proof of the proposition follows by word for word repetition (ignoring the ergodicity of $T$) of the proof of Prop. 5.1 in \cite{Le-Pa}.}
{
	Notice moreover that this proposition applies (non-
	trivially) only if $R\in WM$ (because we consider all self-joinings of $T$, hence also
	non-ergodic; on the other hand, $R \times R$ which is considered with product measure is not ergodic whenever $R$ is not weakly mixing; finally use that two
	non-trivial non-ergodic automorphisms are not disjoint).}
\begin{corollary}\label{mult-cart} Assume that $\mathcal{A}$ is a class 
closed under taking Cartesian squares. Then 
$T\in\mathcal{M}(\mathcal{A}^\perp)$ if and only if  for each $\lambda\in 
J_2(T)$, we have $(T\times T,\lambda)\perp R$ for each $R\in
\mathcal{A}$.
\end{corollary}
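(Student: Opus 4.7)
The statement is an equivalence; the $(\Rightarrow)$ direction is essentially definitional, while the $(\Leftarrow)$ direction will be obtained by applying Proposition \ref{prop: product} one element of $\mathcal{A}$ at a time and then reassembling. I would first record the easy direction: if $T\in\mathcal{M}(\mathcal{A}^\perp)$, then $T\in\mathcal{A}^\perp$, and for any $\lambda\in J_2(T)$ the self-joining $(T\times T,\lambda)$ is a joining of two members of $\mathcal{A}^\perp$, hence belongs to $\mathcal{A}^\perp$ by the multiplier property. In particular $(T\times T,\lambda)\perp R$ for every $R\in\mathcal{A}$.

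For the nontrivial direction, assume that $(T\times T,\lambda)\perp R$ for each $\lambda\in J_2(T)$ and each $R\in\mathcal{A}$. The key step is to use the hypothesis that $\mathcal{A}$ is closed under Cartesian squares: for every $R\in\mathcal{A}$ we have $R\times R\in\mathcal{A}$ (with the product measure), so our assumption supplies $(T\times T,\lambda)\perp R\times R$ for every $\lambda\in J_2(T)$. Proposition \ref{prop: product} then yields $T\in\mathcal{M}(\{R\}^\perp)$ for each individual $R\in\mathcal{A}$; in particular $T\in\{R\}^\perp$ for every such $R$, and hence $T\in\bigcap_{R\in\mathcal{A}}\{R\}^\perp=\mathcal{A}^\perp$.

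It remains to upgrade this pointwise-in-$R$ multiplier property to membership in $\mathcal{M}(\mathcal{A}^\perp)$. Given any $S\in\mathcal{A}^\perp$ and any $\lambda\in J(T,S)$, fix $R\in\mathcal{A}$. Since $\mathcal{A}^\perp\subseteq\{R\}^\perp$, we have $S\in\{R\}^\perp$, and because $T\in\mathcal{M}(\{R\}^\perp)$, the joining $(T\vee S,\lambda)$ is again in $\{R\}^\perp$, i.e.\ $(T\vee S,\lambda)\perp R$. Since $R\in\mathcal{A}$ was arbitrary, $(T\vee S,\lambda)\in\bigcap_{R\in\mathcal{A}}\{R\}^\perp=\mathcal{A}^\perp$, proving $T\in\mathcal{M}(\mathcal{A}^\perp)$.

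There is no real obstacle: the whole argument is a bookkeeping exercise around the identity $\mathcal{A}^\perp=\bigcap_{R\in\mathcal{A}}\{R\}^\perp$, together with the observation that closure of $\mathcal{A}$ under Cartesian squares is exactly what makes Proposition \ref{prop: product} applicable uniformly in $R\in\mathcal{A}$. The only subtlety worth flagging is that Proposition \ref{prop: product} as stated demands a condition on \emph{all} self-joinings of $T$, not merely the product one, which is why we must assume disjointness of $(T\times T,\lambda)$ from $R$ for \emph{every} $\lambda\in J_2(T)$ rather than just the spectral disjointness that would suffice for a single $\lambda$.
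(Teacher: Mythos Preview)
Your proof is correct and follows exactly the route the paper indicates: the necessity is immediate from the definition of a multiplier, and for sufficiency you apply Proposition~\ref{prop: product} to each $R\in\mathcal{A}$ (using closure under Cartesian squares to get $R\times R\in\mathcal{A}$), then pass from $T\in\mathcal{M}(\{R\}^\perp)$ for every $R$ to $T\in\mathcal{M}(\mathcal{A}^\perp)$ via the identity $\mathcal{A}^\perp=\bigcap_{R\in\mathcal{A}}\{R\}^\perp$. The paper compresses all of this into a single sentence, but your explicit unpacking of the last step is a welcome addition.
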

Note that the necessity in the corollary is obvious and the other direction follows from Proposition \ref{prop: product}. Similarly to Proposition \ref{prop: product}, the above corollary only applies when $\mathcal A\subset WM$.
Corollary \ref{mult-cart} implies also the following result.
{
\begin{corollary}\label{cart2} Assume that $\mathcal{A}$ is a class closed under taking Cartesian squares. Then $\mathcal{M}(\mathcal{A}^\perp)$ is the largest characteristic class
included in $\mathcal{A}^\perp$. In particular, the result holds when $\mathcal{A}=WM$.
\end{corollary}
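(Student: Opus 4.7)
My plan is to derive this from Corollary~\ref{mult-cart} combined with the fact (already noted in Corollary~\ref{cor:multclass}) that $\mathcal{M}(\mathcal{A}^\perp)$ is itself a characteristic class sitting inside $\mathcal{A}^\perp$. So half of the statement---that $\mathcal{M}(\mathcal{A}^\perp)$ is a characteristic class contained in $\mathcal{A}^\perp$---comes for free. What remains is the maximality: any characteristic class $\mathcal{F}\subseteq \mathcal{A}^\perp$ must be contained in $\mathcal{M}(\mathcal{A}^\perp)$.

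For the maximality, fix such an $\mathcal{F}$ and pick an arbitrary $T\in\mathcal{F}$. I want to verify the criterion in Corollary~\ref{mult-cart}, namely that $(T\times T,\lambda)\perp R$ for every $\lambda\in J_2(T)$ and every $R\in \mathcal{A}$. Since $\mathcal{F}$ is characteristic, it is closed under countable joinings, so in particular $(T\times T,\lambda)\in\mathcal{F}$ for each $\lambda\in J_2(T)$. By hypothesis $\mathcal{F}\subseteq \mathcal{A}^\perp$, hence $(T\times T,\lambda)\in \mathcal{A}^\perp$, which gives exactly the required disjointness from every $R\in\mathcal{A}$. Applying Corollary~\ref{mult-cart} (whose hypothesis that $\mathcal{A}$ is closed under Cartesian squares is precisely the running assumption) yields $T\in\mathcal{M}(\mathcal{A}^\perp)$, so $\mathcal{F}\subseteq\mathcal{M}(\mathcal{A}^\perp)$.

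For the ``in particular'' clause with $\mathcal{A}=WM$, I only need to remark that $WM$ is closed under Cartesian squares: if $R$ is weakly mixing then $U_R$ has no nontrivial eigenvalues on $L^2_0$, so $R\times R$ has no nontrivial eigenvalues either, i.e.\ $R\times R\in WM$. Then the general statement applies verbatim.

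I do not anticipate any real obstacle; the work has already been done in Corollary~\ref{mult-cart} (which in turn rests on Proposition~\ref{prop: product}). The only thing to be careful about is to explicitly invoke closure of a characteristic class under countable joinings at the single step where we pass from $T\in\mathcal{F}$ to $(T\times T,\lambda)\in\mathcal{F}$, and to note that Corollary~\ref{mult-cart} requires exactly the Cartesian-square hypothesis placed on $\mathcal{A}$.
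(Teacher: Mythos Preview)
Your proof is correct and follows exactly the approach the paper indicates: the paper simply states that Corollary~\ref{cart2} is implied by Corollary~\ref{mult-cart}, and your argument spells this out by combining Corollary~\ref{mult-cart} with Corollary~\ref{cor:multclass} in the only natural way.
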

Note that $ID$ satisfies the assumptions of Corollary \ref{cart2}.
It would be interesting to know, whether the assertion of the Corollary \ref{cart2} holds for any class $\mathcal A$.
Notice that for $\mathcal{A}= Erg$  the assumption on the Cartesian squares is not satisfied as the Cartesian square of any (non-trivial) rotation is not ergodic. More than that, in  $Erg^\perp$ the class of multipliers is $ID$, and it is the smallest characteristic class  included in $Erg^\perp$ (the existence of larger than $ID$ characteristic classes follows from Corollary \ref{cor: noergcharclass}), so the assertion of Corollary \ref{cart2} fails for $Erg^\perp$.}

{
A general question arises, whether given a characteristic class $\mathcal{C}\subset \mathcal A^{\perp}$, there exists a maximal characteristic class $\mathcal{C'}\subset \mathcal A^{\perp}$ such that $\mathcal{C}\subset\mathcal{C'}$.
To see that this holds, we apply Kuratowski-Zorn Lemma. Consider a chain $(\mathcal{C}_i)_{i\in\mathbb{I}}$ of characteristic classes such that $\mathcal{C}\subset\mathcal C_i\subset\mathcal A^{\perp}$. Let $\tilde{\mathcal C}$ be the smallest characteristic class containing $\bigcup_{i\in \mathbb{I}} \mathcal{C}_i$. We need to show that $\tilde{\mathcal C}\subset \mathcal A^{\perp}$.
 It is enough to check that if $T_n\in \bigcup_{i\in \mathbb{I}} \mathcal{C}_i$, $n=1,2,\ldots$, then  for every $\eta \in J(T_1,T_2,\ldots)$ we have $(T_1\times T_2 \times \ldots, \eta)\in \mathcal{A}^\perp$.}\\
{
Let $T_k \in \mathcal{C}_{i_k}$, $k=1,\ldots, n$, then, since $(\mathcal C_i)_{i\in\mathbb I}$ is a chain, wlog. we can assume that $T_1, T_2,\ldots, T_n \in \mathcal{C}_{i_n}$. Since $\mathcal{C}_{i_n}\subset \mathcal A^{\perp}$ is a characteristic class, we receive that all joinings of $T_1, T_2,\ldots, T_n$ are in $\mathcal{C}_{i_n}\subset \mathcal A^{\perp}$. To conclude, it remains to use the fact that every infinite joining is an inverse limit of finite joinings.}
{
\begin{corollary}
	For every $T$ satisfying the assumptions of Theorem \ref{thm: self-join}, there exists maximal characteristic class $\mathcal{C}\ni T$, such that $\mathcal{M}(Erg^\perp)\subset \mathcal{C}\subsetneq Erg^{\perp}$.
\end{corollary}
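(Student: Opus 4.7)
The plan is to apply the Kuratowski--Zorn argument sketched in the paragraph immediately preceding the statement, starting from the characteristic class $\mathcal F(T)$. My first step would be to check that $\mathcal F(T)$ is an admissible starting point: under the assumptions of Theorem~\ref{thm: self-join}, Corollary~\ref{cor: noergcharclass} gives that $\mathcal F(T)$ is a characteristic class contained in $Erg^{\perp}$, and $T$ itself lies in $\mathcal F(T)$ (e.g.\ as a factor, on one coordinate, of the countable diagonal self-joining of $T$). Applying the Zorn-type argument described above to the poset of characteristic classes $\mathcal C'$ with $\mathcal F(T)\subset\mathcal C'\subset Erg^{\perp}$ then furnishes a maximal such $\mathcal C$, which in particular contains $T$.

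Next I would verify the lower containment $\mathcal M(Erg^{\perp})\subset\mathcal C$. By Theorem~\ref{t:main}, $\mathcal M(Erg^{\perp})=ID$. Since $T$ is not a one-point system (the concrete $T$ constructed after Theorem~\ref{thm: self-join} from the rank-one automorphisms of Proposition~\ref{rank1disj} has non-trivial ergodic components), the class $\mathcal C\ni T$ is non-trivial, and the result from \cite{Ka-Ku-Le-Ru} recalled in Subsection~\ref{subsec:charclass}---namely, that $ID$ is contained in every non-trivial characteristic class---yields $\mathcal M(Erg^{\perp})=ID\subset\mathcal C$.

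The last point to verify is the strict inclusion $\mathcal C\subsetneq Erg^{\perp}$, which I would obtain by showing that $Erg^{\perp}$ is not itself a characteristic class. If it were, closure of $Erg^{\perp}$ under countable joinings would mean that, for every $S\in Erg^{\perp}$, each joining of $S$ with any element of $Erg^{\perp}$ stays in $Erg^{\perp}$; this is precisely the definition of $S\in\mathcal M(Erg^{\perp})$, whence $\mathcal M(Erg^{\perp})=Erg^{\perp}$. Combined with Theorem~\ref{t:main} this would force $Erg^{\perp}=ID$, contradicting the non-identity element of $Erg^{\perp}$ exhibited in Example~\ref{example1}. Therefore every characteristic class contained in $Erg^{\perp}$---in particular $\mathcal C$---must be a proper subclass.

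I do not anticipate any genuine obstacle in carrying out this plan, since it is essentially an assembly of previously established ingredients. The only point requiring a moment's care is to check that the general Zorn-style construction described above really applies to the situation at hand, but this is automatic, since $\mathcal F(T)\subset Erg^{\perp}$ supplies the required non-empty initial characteristic class and the chain-stability condition is exactly what was verified in the preceding discussion.
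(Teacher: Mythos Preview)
Your proposal is correct and follows the paper's own (implicit) argument: the corollary is stated immediately after the general Kuratowski--Zorn paragraph precisely because it is meant to be read as a direct application of that paragraph to $\mathcal F(T)\subset Erg^\perp$, together with Theorem~\ref{t:main} and Corollary~\ref{cor: noergcharclass}. The one cosmetic point is your justification that $T$ is not a one-point system: rather than appealing to the \emph{concrete} example built after Theorem~\ref{thm: self-join}, note that this holds for \emph{every} $T$ satisfying the hypotheses, simply because $P$ is assumed continuous, so the space of ergodic components is non-trivial.
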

We have not been able to describe the maximal characteristic classes in $Erg^{\perp}$, it is even not clear whether there is only one.

\vspace{5mm}

\noindent \textbf{Acknowledgements.} We thank Mariusz Lemańczyk for numerous comments and discussions. The first author was supported by NCN Grant 2022/45/B/ST1/00179. This article is also a part of the project IZES-ANR-22-CE40-0011.

	\noindent
Faculty of Mathematics and Computer Science, Nicolaus Copernicus University,\\
Chopina 12/18, 87-100 Toruń, Poland\\
zimowy@mat.umk.pl, gorska@mat.umk.pl\\
\smallskip

\noindent
Univ Rouen Normandie, CNRS, Normandie Univ, LMRS UMR 6085, F-76000\\
Rouen, France\\
thierry.de-la-rue@univ-rouen.fr


\begin{thebibliography}{99}
\bibitem{AIM} American Institute of Mathematics, workshop Sarnak’s Conjecture, December 2018,
http://aimpl.org/sarnakconjecture/3/
\bibitem{Au-Go-Le-Ru} T. Austin, M. G\'orska, M.\ Lema\'nczyk and T. de la Rue, in preparation.
\bibitem{Dan} A. Danilenko, \emph{Rank-one actions, their (C; F)-models and constructions with bounded
parameters,} J. Anal. Math. 139 (2019), 697–749.
\bibitem{Da-Ry}A.I. Danilenko, V.V. Ryzhikov, {\em On self-similarities of ergodic flows}, Proc. Lond. Math. Soc. (3) 104 (2012), 431–454.
\bibitem{Fr-Ho} N. Frantzikinakis, B. Host, {\em The logarithmic Sarnak conjecture for ergodic weights},
Annals Math. (2), 187(3) (2018), 869–931.
\bibitem{Fu} H. Furstenberg, {\em Disjointness in ergodic theory, minimal sets and Diophantine
approximation}, Math. Systems Theory 1 (1967), 1–49.
\bibitem{Fu-We} H. Furstenberg, B. Weiss, {\em The finite multipliers of infinite ergodic transformations}, in: Lecture Notes in Math. 668, Springer, 1978, 127–132.
\bibitem{Gao-Hill} Su  Gao, A. Hill, \emph{Disjointness between bounded rank-one transformations,} Colloq. Math. 164 (2021), no. 1, 91–121.
\bibitem{Gl1} E. Glasner, {\em On the multipliers of $W^\perp$}, Ergodic Theory Dynam. Sys. 14 (1994), 129-140.
\bibitem{Gl2} E. Glasner, {\em Ergodic Theory via Joinings}, Mathematical Surveys and Monographs 101, AMS,
Providence, RI, 2003.
\bibitem{Gl-We} E. Glasner, B. Weiss, {\em Processes disjoint from weak mixing}, Trans. Amer. Math. Soc. 316 (1989), 689-703.
\bibitem{Ha-Pa} F. Hahn and W. Parry, {\em Some characteristic properties of dynamical systems with
quasi-discrete spectrum}, Math. Systems Theory 2 (1968), 179–198
\bibitem{Ha}P.R. Halmos, 
\emph{In general a measure preserving transformation is mixing,} 
Ann. of Math. (2) 45 (1944), 786–792. 
\bibitem{dJRS} A. del Junco, M.  Rahe, L. Swanson,\emph{ Chacon's automorphism has minimal self-joinings,} J. Analyse Math. 37 (1980), 276–284.
\bibitem{Ju-Ru}A.\ del Junco, D.\ Rudolph, \emph{On ergodic actions whose self-joinings are graphs}, Ergodic Theory Dynam. Systems 7 (1987), no. 4, 531–557.

\bibitem{Ka-Ku-Le-Ru} A.\ Kanigowski, J.\ Ku\l aga-Przymus, M.\ Lema\'nczyk, T.\ de la Rue, {\em On arithmetic functions orthogonal to deterministic sequences},
Advances Math. {428} (2023), 68 pp.

\bibitem{Kechris} A. S. Kechris\ \emph{Global Aspects of Ergodic Group Actions}, Mathematical Surveys and Monographs
Volume: 160; 2010; 237;

\bibitem{KuRyN} K. Kuratowski;  C. Ryll-Nardzewski\ \emph{A general theorem on 
selectors}. Bull. Acad. Polon. Sci. Sér. Sci. Math. Astronom. Phys. 13 (1965): 
397–403.
\bibitem{Le-Pa} M. Lema\'nczyk, F. Parreau, \emph{ Rokhlin extensions and 
lifting 
disjointness}, Ergodic Theory Dynam. Systems 23 (2003), 1525-1550.
\bibitem{Le-Pa-Th} M. Lema\'nczyk, F. Parreau and J.-P. Thouvenot, \emph{ 
Gaussian automorphisms whose ergodic self-joinings are
Gaussian}, Fundamenta Math. 164 (2000), 253–293.
\bibitem{rokhlin} V. A. Rokhlin, \emph{On the fundamental ideas of measure theory}, Mat. Sb. (N.S.), 25(67):1 (1949), 107–150
\bibitem{Ru} D. Rudolph, \emph{An example of a measure preserving map with minimal self-joinings, and applications}, J. Analyse Math. 35 (1979), 97–122.
\bibitem{Si} Ya. G. Sinai, {\em The structure and properties of invariant measurable partitions}, Dokl.
Akad. Nauk SSSR 141 (1961), 1038–1041.
 \end{thebibliography}
\end{document}